\title[The Euler equations with variable coefficients]{The Euler equations with variable coefficients}
\author[B.~Ingimarson]{Benjamin Ingimarson}
\address{Department of Mathematics, University of Southern California, Los Angeles, CA 90089}
\email{ingimars@usc.edu}
\author[I.~Kukavica]{Igor Kukavica}
\address{Department of Mathematics, University of Southern California, Los Angeles, CA 90089}
\email{kukavica@usc.edu}
\author[A.~Tuffaha]{Amjad Tuffaha}
\address{Department of Mathematics and Statistics, American University of Sharjah, Sharjah, UAE}
\email{atufaha\char'100aus.edu}
  \chardef\forshowkeys=0
  \chardef\showllabel=0
  \chardef\refcheck=0
  \chardef\sketches=0
\begin{document}
\def\YY{X}
\def\OO{\mathcal O}
\def\SS{\mathbb S}
\def\CC{\mathbb C}
\def\RR{\mathbb R}
\def\TT{\mathbb T}
\def\ZZ{\mathbb Z}
\def\HH{\mathbb H}
\def\RSZ{\mathcal R}
\def\LL{\mathcal L}
\def\SL{\LL^1}
\def\ZL{\LL^\infty}
\def\GG{\mathcal G}
\def\tt{\langle t\rangle}
\def\erf{\mathrm{Erf}}
\def\mgt#1{\textcolor{magenta}{#1}}
\def\ff{\rho}
\def\gg{G}
\def\sqrtnu{\sqrt{\nu}}
\def\ww{w}
\def\ft#1{#1_\xi}
\def\les{\lesssim}
\def\ges{\gtrsim}
\renewcommand*{\Re}{\ensuremath{\mathrm{{\mathbb R}e\,}}}
\renewcommand*{\Im}{\ensuremath{\mathrm{{\mathbb I}m\,}}}
\ifnum\showllabel=1
 \def\llabel#1{\marginnote{\color{lightgray}\rm\small(#1)}[-0.0cm]\notag}
%%%%%%%%%%%%%%%%%%%\def\llabel#1{\label{#1}}
%  \reversemarginpar
  %\def\llabel#1{\notag}
%  \def\llabel#1{\label{#1}}
\else
% % \def\llabel#1{\nonumber}
 \def\llabel#1{\notag}
\fi
\newcommand{\norm}[1]{\left\|#1\right\|}
\newcommand{\nnorm}[1]{\lVert #1\rVert}
\newcommand{\abs}[1]{\left|#1\right|}
\newcommand{\NORM}[1]{|\!|\!| #1|\!|\!|}
\newtheorem{theorem}{Theorem}[section]
\newtheorem{Theorem}{Theorem}[section]
\newtheorem{corollary}[theorem]{Corollary}
\newtheorem{Corollary}[theorem]{Corollary}
\newtheorem{proposition}[theorem]{Proposition}
\newtheorem{Proposition}[theorem]{Proposition}
\newtheorem{Lemma}[theorem]{Lemma}
\newtheorem{lemma}[theorem]{Lemma}
\theoremstyle{definition}
\newtheorem{definition}{Definition}[section]
\newtheorem{remark}[Theorem]{Remark}
\def\theequation{\thesection.\arabic{equation}}
\numberwithin{equation}{section}
%%%%%%%%%%%%%%%%%%%%%%%%%%
\definecolor{mygray}{rgb}{.6,.6,.6}
\definecolor{myblue}{rgb}{9, 0, 1}
\definecolor{colorforkeys}{rgb}{1.0,0.0,0.0}
\newlength\mytemplen
\newsavebox\mytempbox
\makeatletter
\newcommand\mybluebox{%
    \@ifnextchar[%]
       {\@mybluebox}%
       {\@mybluebox[0pt]}}
\def\@mybluebox[#1]{%
    \@ifnextchar[%]
       {\@@mybluebox[#1]}%
       {\@@mybluebox[#1][0pt]}}
\def\@@mybluebox[#1][#2]#3{
    \sbox\mytempbox{#3}%
    \mytemplen\ht\mytempbox
    \advance\mytemplen #1\relax
    \ht\mytempbox\mytemplen
    \mytemplen\dp\mytempbox
    \advance\mytemplen #2\relax
    \dp\mytempbox\mytemplen
    \colorbox{myblue}{\hspace{1em}\usebox{\mytempbox}\hspace{1em}}}
\makeatother
%%%%%%%%%%%%%%%%%%%%%%%%%%
%Igor's macros  varmac
\def\BMO{\text{BMO}}
\def\rr{r}
\def\weaks{\text{\,\,\,\,\,\,weakly-* in }}
\def\inn{\text{\,\,\,\,\,\,in }}
\def\cof{\mathop{\rm cof\,}\nolimits}
\def\Dn{\frac{\partial}{\partial N}}
\def\Dnn#1{\frac{\partial #1}{\partial N}}
\def\tdb{\tilde{b}}
\def\tda{b}
\def\qqq{u}
\def\lat{\Delta_2}
\def\biglinem{\vskip0.5truecm\par==========================\par\vskip0.5truecm}
\def\inon#1{\hbox{\ \ \ \ \ \ \ }\hbox{#1}}                %in or on
\def\onon#1{\inon{on~$#1$}}
\def\inin#1{\inon{in~$#1$}}
\def\FF{F}
\def\andand{\text{\indeq and\indeq}}
\def\ww{w(y)}
\def\ll{{\color{red}\ell}}
\def\ee{\epsilon_0}
\def\startnewsection#1#2{ \section{#1}\label{#2}\setcounter{equation}{0}}   
%\starts a new section\def\dist{\mathop{\rm dist}\nolimits}
\def\nnewpage{ }
\def\sgn{\mathop{\rm sgn\,}\nolimits}    
\def\Tr{\mathop{\rm Tr}\nolimits}    
\def\div{\mathop{\rm div}\nolimits}
\def\curl{\mathop{\rm curl}\nolimits}
\def\dist{\mathop{\rm dist}\nolimits}  
\def\supp{\mathop{\rm supp}\nolimits}
\def\indeq{\quad{}}           
\def\period{.}                       
\def\semicolon{\,;}                  
\def\nts#1{{\cor #1\cob}}
\def\colr{\color{red}}
\def\colrr{\color{black}}
\def\colb{\color{black}}
\def\coly{\color{lightgray}}
\definecolor{colorgggg}{rgb}{0.1,0.5,0.3}
\definecolor{colorllll}{rgb}{0.0,0.7,0.0}
\definecolor{colorhhhh}{rgb}{0.3,0.75,0.4}
\definecolor{colorpppp}{rgb}{0.7,0.0,0.2}
%\definecolor{coloroooo}{rgb}{0.9,0.4,0}
\definecolor{coloroooo}{rgb}{0.45,0.0,0.0}
\definecolor{colorqqqq}{rgb}{0.1,0.7,0}
\def\colg{\color{colorgggg}}
\def\collg{\color{colorllll}}
\def\cole{\color{coloroooo}}
\def\coleo{\color{colorpppp}}
\def\cole{\color{black}}
\def\colu{\color{blue}}
\def\colc{\color{colorhhhh}}
\def\colW{\colb}   %color for weight
\definecolor{coloraaaa}{rgb}{0.6,0.6,0.6}%%%out
\def\colw{\color{coloraaaa}}
\def\comma{ {\rm ,\qquad{}} }            
\def\commaone{ {\rm ,\quad{}} }          
\def\les{\lesssim}
\def\nts#1{{\color{blue}\hbox{\bf ~#1~}}} 
\def\ntsf#1{\footnote{\color{colorgggg}\hbox{#1}}} 
\def\blackdot{{\color{red}{\hskip-.0truecm\rule[-1mm]{4mm}{4mm}\hskip.2truecm}}\hskip-.3truecm}
\def\bluedot{{\color{blue}{\hskip-.0truecm\rule[-1mm]{4mm}{4mm}\hskip.2truecm}}\hskip-.3truecm}
\def\purpledot{{\color{colorpppp}{\hskip-.0truecm\rule[-1mm]{4mm}{4mm}\hskip.2truecm}}\hskip-.3truecm}
\def\greendot{{\color{colorgggg}{\hskip-.0truecm\rule[-1mm]{4mm}{4mm}\hskip.2truecm}}\hskip-.3truecm}
\def\cyandot{{\color{cyan}{\hskip-.0truecm\rule[-1mm]{4mm}{4mm}\hskip.2truecm}}\hskip-.3truecm}
\def\reddot{{\color{red}{\hskip-.0truecm\rule[-1mm]{4mm}{4mm}\hskip.2truecm}}\hskip-.3truecm}
\def\tdot{{\color{green}{\hskip-.0truecm\rule[-.5mm]{3mm}{3mm}\hskip.2truecm}}\hskip-.1truecm}
\def\gdot{\greendot}
\def\bdot{\bluedot}
\def\ydot{\cyandot}
\def\rdot{\cyandot}
\def\fractext#1#2{{#1}/{#2}}
\def\ii{\hat\imath}
%%%%%%%
\def\fei#1{\textcolor{blue}{#1}}
\def\vlad#1{\textcolor{cyan}{#1}}
\def\igor#1{\text{{\textcolor{colorqqqq}{#1}}}}
\def\igorf#1{\footnote{\text{{\textcolor{colorqqqq}{#1}}}}}
%varmac
%\def\AA{Y}
\newcommand{\p}{\partial}
\newcommand{\UE}{U^{\rm E}}
\newcommand{\PE}{P^{\rm E}}
\newcommand{\KP}{K_{\rm P}}
\newcommand{\uNS}{u^{\rm NS}}
\newcommand{\vNS}{v^{\rm NS}}
\newcommand{\pNS}{p^{\rm NS}}
\newcommand{\omegaNS}{\omega^{\rm NS}}
\newcommand{\uE}{u^{\rm E}}
\newcommand{\vE}{v^{\rm E}}
\newcommand{\pE}{p^{\rm E}}
\newcommand{\omegaE}{\omega^{\rm E}}
\newcommand{\ua}{u_{\rm   a}}
\newcommand{\va}{v_{\rm   a}}
\newcommand{\omegaa}{\omega_{\rm   a}}
\newcommand{\ue}{u_{\rm   e}}
\newcommand{\ve}{v_{\rm   e}}
\newcommand{\omegae}{\omega_{\rm e}}
\newcommand{\omegaeic}{\omega_{{\rm e}0}}
\newcommand{\ueic}{u_{{\rm   e}0}}
\newcommand{\veic}{v_{{\rm   e}0}}
\newcommand{\up}{u^{\rm P}}
\newcommand{\vp}{v^{\rm P}}
\newcommand{\tup}{{\tilde u}^{\rm P}}
\newcommand{\bvp}{{\bar v}^{\rm P}}
\newcommand{\omegap}{\omega^{\rm P}}
\newcommand{\tomegap}{\tilde \omega^{\rm P}}
%Ben's two macros
\newcommand{\eps}{\varepsilon}
\newcommand{\weakto}{\rightharpoonup}
\renewcommand{\up}{u^{\rm P}}
\renewcommand{\vp}{v^{\rm P}}
\renewcommand{\omegap}{\Omega^{\rm P}}
\renewcommand{\tomegap}{\omega^{\rm P}}
%$\up$ $\vp$ $\omegap$
%**end of header

\begin{abstract}
We establish local-in-time existence for the Euler equations on a bounded 
domain with space--time dependent variable coefficients, given initial data 
$v_0 \in H^r$ under the optimal regularity condition $r > 2.5$. 
%We derive a~priori estimates ensuring existence in $H^r$. 
In the case $r=3$, we further prove a Beale--Kato--Majda-type criterion 
that relates blow-up in the $H^r$ norm to the $\BMO$ norm 
of the variable vorticity $\zeta$.
\colb
\end{abstract}
\colb
%\keywords{}
\maketitle
\setcounter{tocdepth}{2} 
\tableofcontents

\startnewsection{Introduction}{sec00}
We consider the well-posedness and blow-up criteria for a system describing the Euler equations with space-time variable coefficients, 
  \begin{align}
   \begin{split} 
    &\partial_t v_i 
    + (v_m - \psi_m) a_{k m} \partial_k v_i
    + a_{k i} \partial_k q
    = 0
    \comma i =1,2,3,
    \\& 
    a_{j i} \partial_j v_i = 0
   \end{split}
   \label{EQaa}
  \end{align}
in $\Omega$, with the initial condition $v|_{t=0} = v_0$ and the boundary condition
  \begin{equation}
   v_k a_{j k} n_j
   = 
   \psi_k a_{j k} n_j
   \inon{on $\partial \Omega$ }
   ,
   \label{EQcc}
  \end{equation}
where $\Omega \subset \mathbb{R}^3$ is open and bounded with smooth boundary. 
In particular, we prove existence in the space $H^r$ where $r > 2.5$ (the minimal regularity for the classical Euler equations) for initial data $v_0 \in H^r$.
Along with a~priori estimates, we provide a Beale-Kato-Majda-type (BKM) criterion which relates the blow-up of a solution in the $H^r$ norm with the $\BMO$ norm of the variable vorticity $\zeta_i = \eps_{ijk} b_{\ell j} \partial_\ell v_k$.

The model under investigation arises when studying the incompressible Euler equations on domains with a free-moving boundary with a prescribed velocity via the Arbitrary Lagrangian Eulerian (ALE) change of variable, which transforms the problem to a fixed domain. The given function $\psi$ in this setting represents the velocity of the change of variables map that transforms the domain to a fixed one, and whose normal component on the boundary coincides with the normal velocity of the flow in what is known as the kinematic condition.
For example, this model arises naturally in the study of the free-boundary problem surrounding the fluid-structure interaction between an inviscid fluid with an elastic plate, also referred to as the Euler-plate system, recently studied in~\cite{KuT,AKT}. In this particular model, the fluid pressure acts as a dynamic forcing on a fourth-order equation governing the transverse motion of the boundary, while the kinematic condition matches the normal fluid velocity to the interface velocity. 
In the process of  constructing strong solutions for this particular model, a specific version of the above variable-coefficient Euler equations with inhomogeneous boundary conditions arises, where $\psi= \langle 0,0, \psi_{3}\rangle$ and $\psi_{3}$ is the harmonic extension of the interface velocity. 
Notably, the model~\eqref{EQaa}--\eqref{EQcc} also emerges when considering the classical Euler equations in a general time varying domain with a kinematic condition that matches the normal fluid velocity to the interface velocity. 
Finally, we note that our model reduces to the classical Euler equations when $a_{ij} = \delta_{ij}$ and $\psi = 0$. 
%The main difficulty when studying the system is that the coefficients are time-dependent. 

%\colr
%Note that the system would also arise naturally when considering the Euler equations
%on an evolving Riemannian manifold.
%It also arises when considering the classical Euler equations in a time-varying domain.
%It reduces to the classical Euler equations when $a_{ij}=\delta_{ij}$ and $\psi=0$. 
%\colb

This paper is the first general treatment of well-posedness and blow-up of the incompressible variable coefficient Euler equations in a very general setting. The vast majority of mathematical treatises on the free boundary Euler equations pertain to the water wave model where pressure boundary conditions are imposed, see \cite{AKOT,ChL, CS1, KTV, KTVW, L, Lin, SZ1,SZ2, W1, W2, W3,WZZZ} and references therein. 
In the literature, we are only aware of an early work by Paolo Secchi \cite{Se} on the compressible Euler equations in a time-dependent domain evolving according to a prescribed smooth map, where local-in-time solutions are established for the variable coefficient system arising from the model. Our study is then motivated by the Euler-plate system describing the interaction between an inviscid flow and an elastic interface whose motion is governed by a fourth-order Kirchoff equation, 
%The earliest related work on the Euler-plate model is due to
%Beir\~{a}o da Veiga~\cite{B}, who showed the existence of a strong solution to a coupled 2D Navier-Stokes-plate model. 
%Later works such as Chambolle et al.~\cite{CDEG} considered weak solutions to the Navier-Stokes system %coupled with the plate equations under various degrees of damping
first treated in~\cite{KuT},
where the authors considered the system in a periodic channel and provide
a~priori estimates for existence in spaces of minimal regularity
$H^{2.5+}$ for velocity
and constructed solutions
with the velocity in~$H^{3}$. The minimal case $r \in (2.5,3)$ was later treated in~\cite{AKT}.
In \cite{KuT,AKT}, the system was recast using the ALE (Arbitrary Lagrangian Eulerian) variables, which reduce to fluid equations of the form~\eqref{EQaa}--\eqref{EQcc}, where the Euler equations contain variable coefficients and inhomogeneous boundary data. 
In these two papers, the variable coefficients and boundary data tend to behave quite well; as mentioned before, they descend from a time-dependent change of variables via the harmonic extension of the boundary transversal displacement. 

In the present paper, we take much weaker assumptions on the variable-coefficient system, which complicate matters on well-posedness and blow-up. 
In addition, this is the first study of a Beale-Kato-Majda-type blow-up criterion for the Euler equations with variable coefficients and inhomogeneous boundary conditions, and the result in this general setting can be applied to the Euler-plate system, where no such result has been obtained in earlier works.

The paper is organized as follows.
In Section~\ref{sec01}, we state the model and main results.  
Theorem~\ref{T01} states the a~priori estimates for our local-in-time solution. 
Next, Theorem~\ref{T02} states the existence theorem. 
These two theorems are proven in the case $r \in (2.5,3]$, though the case $r > 3$ is identical.
Our final result, Theorem~\ref{T09a}, states a Beale-Kato-Majda-type criterion,
asserting that, under some extra assumptions on $a$ and
$\psi$, a blow-up in the $H^r$ norm necessitates a blow-up of $\|v\|_{H^1} + \|\zeta\|_{\BMO}$. 
Note that the classical BKM criterion was derived in
\cite{BKM} (see also \cite{P}), with the papers \cite{F,SY,Z} addressing the
case of a bounded domain.

In Section~\ref{sec02}, the a~priori estimates are attained through a div-curl-type bound on the fluid velocity. 
A considerable deviation from previous literature is that we are not assuming $a$ arises from a change of variables in which one could approximate it with the identity map for small time $[0,T]$.
Instead, we make a weaker assumption in that only $b = \cof(a^{-1})^T$ satisfies the Piola condition and $\det{a}$ is positively bounded above and below so that $a^T a$ is uniformly elliptic.
Note that the Piola condition is weaker than $a$ being conservative, the former being equivalent to the statement that the net flux of $b$ through any closed surface is $0$, yet it may not have vanishing curl.
With this, the original div-curl lemma as stated in~\cite{BB} does not pass muster. 
We get around this issue by proving a new variable div-curl lemma befitting our more general setting, which critically relies on the ellipticity of $b^T a$. 
Also, in sharp contrast with standard vorticity estimates in which the pressure drops out, our variable vorticity equation maintains a lower-order pressure term among other lower-order forcing terms propagated by the velocity.
Moreover, this differs from previous literature on the inviscid-plate model such as~\cite{KuT}, in which their variable vorticity equation is derived directly from the ALE change of variables and does not maintain the pressure let alone any forcing terms. 
Indeed, our model is not necessarily descended from an ALE change of coordinates, and so our vorticity equation is derived from applying the variable curl to~\eqref{EQaa}$_1$. 
%Moreover, vorticity estimates are complicated by forcing terms, including a lower-order pressure term remaining after deriving the variable vorticity equation. 
%Moreover, since our coefficients do not descend from a change of variables, the pressure does not entirely vanish when formulating the variable vorticity equation but shows up in a lower-order form. 
%This requires us to assume $\psi_t \in L^\infty H^{r-1}$ rather than $L^\infty H^{r-2}$, the latter mirroring~\cite{KT}. 
%Ultimately then, our pressure term belongs to~$H^r$. 
Finally, in controlling the vorticity, we employ the strategy of extending the equation to $\mathbb{R}^3$ to handle the nonlinear terms with the Kato-Ponce inequality in the non-integer setting, $r \in (2.5,3)$.

In Section~\ref{sec03}, we construct a local-in-time solution for our system. 
In our approach, we first assume smoother variable coefficients and linearize~\eqref{EQaa}$_1$ into a transport equation on~$\mathbb{R}^3$.  
With the corresponding pressure, which is found by solving an elliptic problem with purely Neumann boundary conditions, we come to a solution for~\eqref{EQaa}$_1$ through an iterative procedure. 
Then, to derive the boundary condition~\eqref{EQcc}, we consider a 2D transport equation with tangential derivatives along the boundary of the domain. 
Surprisingly, our system under higher assumed regularity actually gives an inhomogeneous divergence condition which is proportional to the net flux of $b\psi$ through $\partial \Omega$, or equivalently, equal to its own average at each time~$t$. 
%However, the divergence condition itself, once derived, implies the net flux to be $0$ for all time. 
However, the solution under original regularity assumptions of the variable coefficients is attained through approximation under which the homogeneous divergence condition~\eqref{EQaa}$_2$ is attained due to a compatibility condition~\eqref{EQ05} assumed at the outset.
Unexpectedly, the solution to the original system with homogeneous variable divergence is the limit of solutions to systems of essentially the same structure but with inhomogeneous variable divergence.
%While not immediately obvious, the compatibility condition~\eqref{EQ05} is actually natural to our model and hence hidden in some sense. 
Remarkably, the condition~\eqref{EQ05} is
intrinsic to the framework of the problem in the sense that
it is necessitated by the existence of a solution (see Remark~\ref{R01}). 
This reveals a relationship between $\psi$ and the incompressibility of the fluid. 
We end the section with a verification of compatibility of the elliptic Neumann problem for the pressure. 

In Section~\ref{sec04}, we prove a blow-up criterion for solutions in $H^3$ under higher regularity assumptions on $a$ and~$\psi$.
The non-integer case $r \in (2.5,3)$ seems unattainable with the method of extension we use in our vorticity estimates in Section~\ref{sec02}, which are only conducive to small-time estimates on $v$ in~$H^r$.
The same can be said when using the Sobolev-Slobodeckij norm where, in particular, trouble arises due to the forcing terms in the variable vorticity equation.
We note that with the absence of our forcing terms, the non-integer case would, in fact, be straightforward using the Sobolev-Slobodeckij norm. 
Proceeding in $H^3$, the core of the proof relies on controlling the stretching term $\|v\|_{W^{1,\infty}}$ with $\|v\|_{H^1} + \|\zeta\|_{\BMO}$.
To do this, we employ an estimate in the BMO norm on the elliptic system given in the proof of Lemma~\ref{L06}.
The presence of $\|v\|_{H^1}$ in the blow-up criterion is necessitated by the elliptic BMO estimate. 
Moreover, a significant trait of our system~\eqref{EQaa}--\eqref{EQcc} and major difference with standard inviscid problems is that the $L^2$ norm is not conserved, nor is it controlled solely by assumed data.  
A straightforward computation reveals the inhomogeneous boundary data to be the culprit as it maintains the pressure term. 
Therefore, the $L^2$ norm is still driven by the stretching term $\|\nabla v\|_{L^\infty }$; 
the same can be said for the $H^1$ norm.
As this dependence is inadequate in controlling $\|v\|_{W^{1,\infty}}$, we need to include $\|v\|_{H^1}$ in our blow-up criterion.
%What entails is an elliptic estimate on the same system we analyzed in the proof for the variable div-curl lemma in Section~\ref{sec02}. 

\startnewsection{Preliminaries and main results}{sec01}

Let $\Omega \subset \mathbb{R}^3$ be bounded with smooth boundary and $r \in (2.5,3]$.
Let $\psi \colon \Omega \times [0,T] \to \mathbb{R}^3$ and $a \colon \Omega \times [0,T] \to GL_3(\mathbb{R})$ be such that 
  \begin{equation}
   (\psi, \psi_t) \in L^\infty([0,T]; H^{r} \times H^{r-1})
   \label{EQ01}
  \end{equation}
and 
  \begin{equation}
   (a, a_t) \in L^\infty([0,T]; H^{r+1} \times H^{r-1}) 
   .
   \label{EQ02}
  \end{equation}
Note that all unindicated domains are $\Omega$.
Moreover, define 
  \begin{equation}
   J
   = 
   \det(a^{-1})
   \comma
   b 
   =
   \cof(a^{-1})^T
   ,
   \llabel{EQ03}
  \end{equation}
from which it follows 
  \begin{equation}
   a
   =
   \frac{1}{J} b
   .
   \label{EQ03a}
  \end{equation}
We assume that $b$ satisfies the Piola identity
  \begin{equation}
   \partial_j b_{j i}
   = 0 
   \comma
   i = 1,2,3
   .
   \llabel{EQ04}
  \end{equation}
We also assume the compatibility condition 
  \begin{equation}
   \int_{\partial \Omega} \partial_t (n_j b_{j i} \psi_i) 
   = 0
   \comma
   t \in [0,T]
   ,
   \label{EQ05}
  \end{equation}
which is also necessary for any solution to our system.
Finally, we assume that $J$ is uniformly bounded from above and below by positive constants on $\Omega \times [0,T]$. 
This is sufficient for uniform ellipticity of the matrix $b^{T}a$ due to~\eqref{EQ03a}.

Under these assumptions, we consider the Euler equations with variable coefficients
  \begin{align}
   \begin{split} 
    &\partial_t v_i 
    + (v_m - \psi_m) a_{k m} \partial_k v_i
    + a_{k i} \partial_k q
    = 0
    \comma i =1,2,3,
    \\& 
    a_{j i} \partial_j v_i = 0
    ,
   \end{split}
   \label{EQ06}
  \end{align}
in $\Omega$ with the initial condition
  \begin{equation}
   v|_{t = 0} 
   = 
   v_0
   \label{EQ07}
  \end{equation}
and the boundary condition
  \begin{equation}
   (v_k - \psi_k)a_{j k} n_j 
   = 0
   \inon{on $\partial \Omega$ }
   ,
   \label{EQ08}
  \end{equation}
where $n$ is the outward normal vector. 

The first theorem states the a~priori estimates for local existence of the above system. 
\begin{theorem}
\label{T01}
Assume $(v,q)$ is a smooth solution to~\eqref{EQ06}--\eqref{EQ08} on an interval $[0,T]$ with 
  \begin{equation}
   \int_\Omega q 
   = 0
   \llabel{EQ09}
  \end{equation}
such that
  \begin{equation}
   \|v_0\|_{H^r}
   \leq
   M
   \llabel{EQ10}
  \end{equation}
and 
  \begin{equation}
   \|a\|_{H^{r+1}},\,
   \|a_t\|_{H^{r-1}},\,
   \|\psi\|_{H^r},\,
   \|\psi_t\|_{H^{r-1}}
   \leq
   C
   \comma 
   t \in [0,T]
   ,
   \llabel{EQ12}
  \end{equation}
where $r \in (2.5, 3]$. Then $v$ and $q$ satisfy
  \begin{equation}
   \|v\|_{H^r},
   \|q\|_{H^r},
   \|v_t\|_{H^{r-1}}
   \leq
   C_0 M
   \comma
   t \in [0,\min\{T_0,T\}]
   ,
   \label{EQ11}
  \end{equation}
where $C_0 > 0$, and $T_0$ depends on~$M$.  
\end{theorem}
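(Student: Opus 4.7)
The plan is to derive a closed differential inequality for an energy equivalent to $\|v(t)\|_{H^r}^2$, and then to use an ODE comparison argument to extract a lifespan $T_0$ depending only on $M$. The $H^r$ norm of $v$ will be reconstructed through the variable div-curl lemma of Section~\ref{sec02},
\begin{equation*}
\|v\|_{H^r} \leq C\bigl(\|v\|_{L^2} + \|\zeta\|_{H^{r-1}} + \|a_{ji}\partial_j v_i\|_{H^{r-1}} + \|\psi_k a_{jk} n_j\|_{H^{r-1/2}(\partial\Omega)}\bigr),
\end{equation*}
where $\zeta_i = \epsilon_{ijk} b_{\ell j}\partial_\ell v_k$ is the variable vorticity. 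The divergence term vanishes by \eqref{EQ06}$_2$, and the boundary term is bounded a~priori by the data on $a$ and $\psi$. Thus the task reduces to propagating $\|\zeta\|_{H^{r-1}}$, $\|v\|_{L^2}$, and $\|q\|_{H^r}$ simultaneously.

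The vorticity equation is obtained by applying $\epsilon_{ijk} b_{\ell j}\partial_\ell$ to \eqref{EQ06}$_1$. In contrast with the constant-coefficient setting, the pressure gradient does not drop out because $b_{\ell j}\partial_\ell$ does not commute with $a_{ki}\partial_k$, so a lower-order commutator involving $\nabla q$ survives. The resulting equation has the schematic form
\begin{equation*}
\partial_t \zeta_i + (v_m - \psi_m) a_{km}\partial_k \zeta_i = \text{(variable stretching)} + G_i(a,b,\psi,\nabla v, \nabla q),
\end{equation*}
where $G$ collects lower-order terms with at most one derivative of $v$ or $q$ and with coefficients controlled by \eqref{EQ01}--\eqref{EQ02}. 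To accommodate the non-integer range $r \in (2.5,3)$, I would extend $v$, $\zeta$, and the coefficients across the smooth boundary to all of $\mathbb{R}^3$ and apply the Kato-Ponce commutator estimate; this yields a bound of the form
\begin{equation*}
\frac{d}{dt}\|\zeta\|_{H^{r-1}}^2 \leq C\bigl(1+\|v\|_{H^r}\bigr)\bigl(\|\zeta\|_{H^{r-1}}^2 + \|q\|_{H^r}^2 + 1\bigr).
\end{equation*}

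The pressure is recovered from a variable elliptic Neumann problem obtained by taking $b_{ji}\partial_j$ of \eqref{EQ06}$_1$ and using the Piola identity; the Neumann boundary data is extracted from the time derivative of \eqref{EQ08} together with the tangential components of \eqref{EQ06}$_1$, and the compatibility condition of this Neumann problem is guaranteed by~\eqref{EQ05}. Uniform ellipticity of $b^T a$ then delivers $\|q\|_{H^r} \leq C(1+\|v\|_{H^r}^2)$. The $L^2$ norm of $v$ is not conserved in this problem, so it must be propagated directly from \eqref{EQ06}$_1$; the inhomogeneous boundary condition forces $\|q\|_{H^r}$ to appear on the right-hand side of the $L^2$ energy identity. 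The equation itself then yields $\|v_t\|_{H^{r-1}} \leq C(1+\|v\|_{H^r}^2)$. Assembling all of these estimates produces a differential inequality of the form $\frac{d}{dt} E(t) \leq C(1+E(t))^p$ with $E(t) := \|v\|_{L^2}^2 + \|\zeta\|_{H^{r-1}}^2$ and some $p \geq 2$, and comparison with the ODE $y' = C(1+y)^p$ starting from $y(0) \leq CM^2$ yields a lifespan $T_0 = T_0(M) > 0$ on which $E(t) \leq (C_0 M)^2$, whence \eqref{EQ11}. The principal obstacle is the nonvanishing pressure contribution to the vorticity equation, which is absent in the classical estimates as well as in \cite{KuT,AKT}; it forces the vorticity and pressure estimates to be closed jointly, while the non-integer regularity adds the technical necessity of passing to the $\mathbb{R}^3$ extension in order to apply Kato-Ponce.
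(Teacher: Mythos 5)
Your overall strategy coincides with the paper's: the $H^r$ norm is reconstructed from the variable div-curl inequality (Lemma~\ref{L06}), the variable vorticity equation retains a lower-order pressure forcing which is closed through the elliptic Neumann estimate of Lemma~\ref{L02} (built from \eqref{EQ17} and \eqref{EQ20}), $\|v_t\|_{H^{r-1}}$ is read off the equation as in Lemma~\ref{L07}, and everything is assembled into a polynomial differential inequality handled by a Gronwall/barrier argument, exactly as in the paper's proof of Theorem~\ref{T01}. The remaining differences are cosmetic: you propagate $\|v\|_{L^2}$ by a direct energy identity (keeping the boundary pressure contribution), whereas the paper bounds it through $\int_0^t\|v_t\|_{L^2}^2$; moreover, the compatibility condition \eqref{EQ05} plays no role in the a~priori estimate---for a given smooth solution the Neumann problem \eqref{EQ17}, \eqref{EQ20} is compatible by construction---and it is the normal (not tangential) component of \eqref{EQ06}$_1$ on $\partial\Omega$, combined with the time derivative of \eqref{EQ08}, that furnishes the Neumann datum.

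One step of your sketch would fail if implemented literally: ``extend $v$, $\zeta$, and the coefficients to $\mathbb{R}^3$ and apply Kato--Ponce.'' The Sobolev extension of $\zeta$ does not satisfy the extended transport equation outside $\Omega$, so applying the nonlocal operator $\Lambda^{r-1}$ and testing produces error terms supported off $\Omega$ that are not controlled by the data. The paper's device (Lemma~\ref{L04}) is to multiply the vorticity equation by $J$, extend the coefficients and the forcing, solve the extended transport problem \eqref{EQ61} for a new unknown $\theta$ with extended initial data, and then prove---via an $L^2(\Omega)$ Gronwall argument in which the boundary term vanishes precisely because of \eqref{EQ08}, i.e.\ the tangency of the transport field to $\partial\Omega$---that $\theta$ restricts to $\zeta$ on $\Omega\times[0,T]$. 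The $H^{r-1}$ energy estimate (Lemma~\ref{L05}) is then performed on $\theta$, with the extension $\bar{J}$ kept positive on a neighborhood of $\Omega$ so that the weighted energy controls $\|\zeta\|_{H^{r-1}(\Omega)}$. Without this step (or an equivalent Sobolev--Slobodeckij argument carried out on $\Omega$ itself), your vorticity inequality for non-integer $r$ is not justified; with it, the rest of your argument goes through as in the paper.
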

The second theorem states the corresponding existence result.
\begin{theorem}
\label{T02}
Assume that $v_0 \in H^r$, where $r \in (2.5,3)$, satisfies the divergence and boundary conditions stated above at time $t = 0$.
Further assume the compatibility condition~\eqref{EQ05}.
Then there exists a local-in-time solution $(v,q)$ to the system~\eqref{EQ06}--\eqref{EQ08} such that 
  \begin{align}
   \begin{split}
    &v \in L^\infty ([0,T]; H^r(\Omega)) \cap C([0,T]; H^{r-1}(\Omega))
    ,
    \\&
    v_t \in L^\infty([0,T]; H^{r-1}(\Omega))
    ,
    \\&
    q \in L^\infty ([0,T]; H^r(\Omega))
    ,
   \end{split}
   \llabel{EQa}
  \end{align}
for some time $T > 0$ depending on $v_0$ and assumed data $\psi$ and~$a$. 
Moreover, the solution  $(v,q)$ satisfies the estimate 
  \begin{equation}
   \|v(t)\|_{H^r}
   + \|\nabla q \|_{H^{r-1}}
   \les
   P(
     \|v_0\|_{H^r},
     \|a\|_{H^{r+1}},
     \|\psi\|_{H^r}
     )
   + \int_0^t P(
     \|\psi\|_{H^r},
     \|\psi_t\|_{H^{r-1}},
     \|a\|_{H^{r+1}},
     \|a_t\|_{H^{r-1}}
     )
   \,ds
   ,
   \llabel{EQb}
  \end{equation}
for $t \in [0,T]$.
\end{theorem}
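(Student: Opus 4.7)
The plan is to construct solutions by a two-level approximation: first we regularize the data $(v_0, a, \psi)$ by mollification so that the coefficients have enough smoothness to run an iteration scheme, then we build a solution to each regularized problem by a Picard-type iteration involving a linear transport equation coupled with an elliptic Neumann problem for the pressure, and finally we pass to the limit using the $H^r$ a~priori bound from Theorem~\ref{T01}. Throughout, the compatibility condition~\eqref{EQ05} will be the key structural input that reconciles the approximate inhomogeneous divergence condition with the homogeneous one~\eqref{EQ06}$_2$ in the limit.

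More concretely, given smoothed data $(a^{(\eps)},\psi^{(\eps)},v_0^{(\eps)})$, I would set up the iteration $v^{(n)} \mapsto v^{(n+1)}$ as follows. First, extend the transport equation
\[
\partial_t v^{(n+1)}_i + (v^{(n)}_m - \psi_m^{(\eps)})\,a^{(\eps)}_{km}\partial_k v^{(n+1)}_i + a^{(\eps)}_{ki}\partial_k q^{(n)} = 0
\]
to all of $\mathbb{R}^3$ (via a suitable extension of the coefficients and the transport field) and solve it by the method of characteristics, which produces $v^{(n+1)}$ with the same regularity as its initial data. The pressure $q^{(n)}$ is determined at each instant by an elliptic Neumann problem: take the variable divergence $a^{(\eps)}_{ji}\partial_j$ of the momentum equation, use the Piola identity to rewrite $a^{(\eps)}_{ji}\partial_j(a^{(\eps)}_{ki}\partial_k q^{(n)})$ as a uniformly elliptic operator in divergence form (its ellipticity following from the assumed bounds on $J$), and impose the Neumann boundary condition obtained by dotting the momentum equation with $n_j b^{(\eps)}_{ji}$ along $\partial\Omega$ and using~\eqref{EQ08}. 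The compatibility of this Neumann problem — i.e.\ that the bulk source integrates to the boundary data — is exactly what the last paragraph of the introduction alludes to verifying; it will follow from integration by parts together with the Piola identity and~\eqref{EQ05}.

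Next, to recover the boundary condition~\eqref{EQ08} and the divergence-free condition for the iterates, I would argue as follows. Define $\Phi^{(n+1)} = (v^{(n+1)}_k-\psi^{(\eps)}_k)a^{(\eps)}_{jk}n_j$ on $\partial\Omega$ and $\Psi^{(n+1)} = a^{(\eps)}_{ji}\partial_j v^{(n+1)}_i$ in $\Omega$; direct computation using the transport equation for $v^{(n+1)}$ shows that $\Phi^{(n+1)}$ satisfies a two-dimensional linear transport equation on $\partial\Omega$ with only tangential derivatives, so that zero initial data propagate to $\Phi^{(n+1)}\equiv 0$. The analogous calculation for $\Psi^{(n+1)}$ yields (at the regularized level, as the introduction warns) a transport equation with a nonzero right-hand side whose integral over $\Omega$ equals the boundary flux of $b^{(\eps)}\psi^{(\eps)}$; by~\eqref{EQ05} this right-hand side has the same mean as $\Psi^{(n+1)}$ itself, producing a mean-preserving inhomogeneous divergence at the approximate level. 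Standard contraction estimates in a low-regularity norm (e.g.\ $L^\infty_t H^{r-1}_x$) on a short time interval close the iteration and produce $(v^{(\eps)},q^{(\eps)})$.

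The main obstacle will be the passage to the limit $\eps \to 0$ and, in particular, recovering the homogeneous divergence condition. The uniform bounds from Theorem~\ref{T01} give weak-$*$ convergence of $v^{(\eps)}$ in $L^\infty_t H^r$, strong convergence in $L^\infty_t H^{r-1}$ by Aubin–Lions (using $v_t \in L^\infty_t H^{r-1}$), and similar convergence of $q^{(\eps)}$, all of which allow passage to the limit in each nonlinear term of~\eqref{EQ06}$_1$ and in the boundary condition~\eqref{EQ08}. For~\eqref{EQ06}$_2$, one must show that the approximate inhomogeneous divergence (which has vanishing mean for the smoothed data thanks to~\eqref{EQ05}) converges to zero pointwise in time; this is the delicate point flagged in the introduction, and it will be done by using the transport structure of $\Psi^{(\eps)}$ together with $\Psi^{(\eps)}|_{t=0}=0$ and the vanishing, as $\eps\to 0$, of the right-hand side of its evolution equation in an appropriate norm. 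Uniqueness, which is not required by the statement but is a standard byproduct, follows by an energy estimate on the difference of two solutions in the same low-regularity norm used for the iteration.
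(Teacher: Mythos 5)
Your overall architecture is the same as the paper's: regularize the coefficients, iterate a linear transport equation on $\mathbb{R}^3$ coupled with an elliptic Neumann problem for the pressure, recover \eqref{EQ08} from a tangential transport equation on $\partial\Omega$, accept an inhomogeneous divergence at the approximate level, and pass to the limit using the a~priori bounds and \eqref{EQ05}. However, two of your stated mechanisms would fail as written. First, you claim the compatibility of the Neumann problem ``follows from integration by parts together with the Piola identity and \eqref{EQ05}.'' It does not: in the iteration the datum $\tilde v=v^{(n)}$ satisfies neither the divergence nor the boundary condition, and the mollified data $(a^{(\eps)},\psi^{(\eps)})$ need not satisfy \eqref{EQ05} (mollification does not preserve it), so $\int_\Omega\tilde f=\int_{\partial\Omega}\tilde g$ fails in general and the pressure does not exist. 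The paper's fix is to add the spatially constant correction $\mathcal{E}(t)$ to the interior source and verify compatibility of the corrected problem directly (Lemma~\ref{L08}); this correction is not cosmetic—it is exactly what reappears as the right-hand side of the transport equation for $\mathcal{D}=b_{ji}\partial_j v_i$ and produces the inhomogeneous divergence you allude to. Second, your claim that the approximate divergence ``has vanishing mean for the smoothed data thanks to \eqref{EQ05}'' is incorrect for the same reason: the approximate divergence is the spatial constant $(J^{(\eps)})^{-1}|\Omega|^{-1}\int_{\partial\Omega}n_jb^{(\eps)}_{ji}\psi^{(\eps)}_i$, nonzero in general. It vanishes only in the limit, because the flux converges to $\int_{\partial\Omega}n_jb_{ji}\psi_i$, which is zero by \eqref{EQ05} combined with the compatibility of $v_0$ at $t=0$ (cf.\ \eqref{EQ297b} and \eqref{EQ307}); this is the argument you need, not a mean-zero property at fixed $\eps$.

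A further gap is the appeal to Theorem~\ref{T01} for the uniform bounds: that theorem is stated for smooth solutions satisfying the homogeneous divergence condition, whereas your regularized solutions are only in $L^\infty H^r$ and carry the constant inhomogeneous divergence. One must rerun the vorticity estimate of Lemma~\ref{L05} via difference quotients (the pressure is smooth enough, but $v$ is not), keep the divergence term in the div-curl inequality of Lemma~\ref{L06}, and—crucially—close the estimate in the \emph{original}-regularity norms of $(a,\psi)$ so that the bound \eqref{EQ249} is uniform in the regularization parameter; this is the content of the paper's Step~4 and is what makes the compactness/Aubin--Lions limit passage legitimate. (Minor points: mollifying $v_0$ is unnecessary—the paper keeps the same $v_0$ and only approximates $(a,\psi)$, and if you do mollify $v_0$ you must project it back onto the constraint set, which you only address implicitly; also solving the transport step by characteristics versus an energy method is immaterial.)
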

Above and throughout the paper, the symbol $P$ denotes a non-negative generic polynomial of its arguments.

The third theorem states a Beale-Kato-Majda-type criterion for the system under $r = 3$,
for which we additionally assume
  \begin{equation}
   (\psi,\psi_t) \in L^\infty([0,\infty); H^{r+1} \times H^r)
   \label{EQ401a}
  \end{equation}
and 
  \begin{equation}
   (a,a_t) \in L^\infty([0,\infty); H^{r+2} \times H^r)
   .
   \label{EQ402a}
  \end{equation}
\begin{theorem}
\label{T09a}
Let $v$ be a solution to~\eqref{EQ06}--\eqref{EQ08} in the class $C([0,T]; H^r(\Omega))$ where $r = 3$. 
Suppose that $T = \hat{T}$ is the first time such that $v \notin C([0,T]; H^r(\Omega))$.
Then
  \begin{equation}
   \int_0^{\hat{T}} (\|v(t)\|_{H^1} + \|\zeta(t)\|_{\BMO})\, \mathrm dt
   =
   \infty
   .
   \llabel{EQ403a}
  \end{equation}
\end{theorem}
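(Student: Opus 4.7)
The plan is to argue by contradiction: assuming $\int_0^{\hat T}(\|v\|_{H^1}+\|\zeta\|_{\BMO})\,dt<\infty$, I will show that $\|v(t)\|_{H^3}$ stays bounded as $t\uparrow \hat T$, so that the solution lies in $C([0,\hat T];H^3)$. Then Theorem~\ref{T02} applied at time $\hat T$ continues $(v,q)$ to a slightly larger interval in the same class, contradicting the maximality of~$\hat T$.

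\textbf{Step 1: energy differential inequality.} Re-running the argument of Section~\ref{sec02} at $r=3$, while keeping the stretching factor explicit rather than absorbed into a generic polynomial, I would obtain, for the variable vorticity $\zeta_i=\eps_{ijk}b_{\ell j}\partial_\ell v_k$, a transport-stretching equation with a lower-order forcing coming from the pressure and from derivatives falling on $a$ and $\psi$. Combined with the variable div-curl lemma from Section~\ref{sec02} (recovering $\nabla v$ from $\zeta$ and the variable divergence via the uniformly elliptic operator $b^T a$) and the boundary condition~\eqref{EQ08}, this should yield
\[
\frac{d}{dt}\|v\|_{H^3}^2 \les (1+\|\nabla v\|_{L^\infty})\,\|v\|_{H^3}^2 + Q(t),
\]
where $Q$ depends polynomially on $\|a\|_{H^5},\|a_t\|_{H^3},\|\psi\|_{H^4},\|\psi_t\|_{H^3}$, all bounded under the enhanced assumptions~\eqref{EQ401a}--\eqref{EQ402a}.

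\textbf{Step 2: logarithmic BMO bound on the stretching term.} The key estimate, in the spirit of Kozono-Taniuchi but adapted to the variable-coefficient bounded-domain setting, is
\[
\|v\|_{W^{1,\infty}} \les \bigl(1+\|v\|_{H^1}+\|\zeta\|_{\BMO}\bigr)\,\log\bigl(e+\|v\|_{H^3}\bigr),
\]
with constants depending on $a$ and~$\psi$. This is precisely where the elliptic BMO estimate flagged in the proof of Lemma~\ref{L06} enters: $\nabla v$ is recovered from $\zeta$ and $\div(av)=0$ by a first-order elliptic system whose principal part is $b^T a$, and a BMO estimate for this system, together with a Littlewood-Paley interpolation, produces the log inequality. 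The term $\|v\|_{H^1}$ on the right cannot be dropped because, as noted in the introduction to Section~\ref{sec04}, the inhomogeneous boundary condition~\eqref{EQ08} retains the pressure in the $L^2$ energy identity, so neither $\|v\|_{L^2}$ nor $\|v\|_{H^1}$ is controlled by data alone and both are themselves driven by $\|\nabla v\|_{L^\infty}$.

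\textbf{Step 3: closing with Gronwall.} Setting $F(t)=\log(e+\|v(t)\|_{H^3}^2)$ and combining Steps~1 and~2 gives
\[
F'(t) \les \bigl(1+\|v(t)\|_{H^1}+\|\zeta(t)\|_{\BMO}\bigr)F(t)+\tilde Q(t),
\]
with $\tilde Q$ bounded on $[0,\hat T]$. The assumed finiteness of $\int_0^{\hat T}(\|v\|_{H^1}+\|\zeta\|_{\BMO})\,dt$ and Gronwall's inequality then yield a uniform bound on $F$, hence on $\|v(t)\|_{H^3}$, for $t<\hat T$. Differentiating~\eqref{EQ06}$_1$ once in time gives $v_t\in L^\infty([0,\hat T);H^2)$ as well, so the solution extends continuously to $t=\hat T$ in $H^3$, and Theorem~\ref{T02} propagates it forward, producing the desired contradiction. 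The most delicate step is Step~2: one must establish the BMO estimate for the variable-coefficient div-curl system on a bounded domain while carefully handling the boundary data~\eqref{EQ08} and the non-conservation of the low norms, which is what forces both $\|v\|_{H^1}$ and the enhanced assumptions~\eqref{EQ401a}--\eqref{EQ402a} into the statement.
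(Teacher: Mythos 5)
Your Steps 2 and 3 reproduce the paper's core argument: the logarithmic bound \eqref{EQ420} obtained from a Kozono--Taniuchi-type inequality combined with the elliptic BMO estimate of Lemma~\ref{L10} (in the paper: a splitting $v=u+w$, a Troianiello BMO bound for the part driven by $\varphi$, an $L^3$ bound for the lower-order part, then interpolation), followed by a log-Gronwall closure and a continuation argument; note only that for $r=3$ the relevant existence statement is Theorem~\ref{T07} (valid for $r\in(2.5,3]$) rather than Theorem~\ref{T02}, which is stated for $r\in(2.5,3)$. The problems are in Step 1 and in the passage from a formal estimate to a solution that is merely $C([0,T];H^3)$.

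First, you propose to derive $\frac{d}{dt}\|v\|_{H^3}^2\les(1+\|\nabla v\|_{L^\infty})\|v\|_{H^3}^2+Q$ by ``re-running the argument of Section~\ref{sec02}'' (vorticity equation extended to $\mathbb{R}^3$ plus the div-curl lemma). The paper deliberately does \emph{not} do this: the extension-plus-Kato--Ponce estimates of Lemma~\ref{L05} produce terms that are only bounded by generic polynomials of $\|v\|_{H^r}$ --- for instance the commutator term estimated by $\|\Lambda^{r-1}((\tilde v-\tilde\psi)\tilde b)\|_{L^6}\,\|\Lambda\theta\|_{L^3}\,\|\Lambda^{r-1}\theta\|_{L^2}\sim\|v\|_{H^3}^2\|\zeta\|_{H^{3/2}}$, which is cubic in norms not controlled by $\|v\|_{H^1}+\|\zeta\|_{\BMO}$ and is not of stretching type --- and the paper explicitly notes that this machinery is ``only conducive to small-time estimates.'' Instead, for $r=3$ the paper performs a direct $D^\alpha$, $|\alpha|\le3$, energy estimate on \eqref{EQ06}, using a commutator bound and the reformulated pressure problem \eqref{EQ46}, \eqref{EQ48} to get $\|\nabla q\|_{H^3}\les\|v\|_{W^{1,\infty}}\|v\|_{H^3}$ and hence \eqref{EQ417}; if you insist on the vorticity route you would have to redo every commutator and forcing estimate (including the pressure term in $f$) keeping only $\|v\|_{W^{1,\infty}}$-type factors, which your sketch does not do. Second, the $H^3$ energy estimate cannot be performed directly on an $H^3$ solution (the integrations by parts against $D^\alpha v$ with $|\alpha|=3$ are not justified). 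The paper proves \eqref{EQ418} first for $H^4$ solutions and then runs a nontrivial approximation: mollified initial data corrected by the projection $P_a$ so as to satisfy the divergence and boundary conditions, a uniform $H^4$ existence time coming from $\partial_t\|v\|_{H^4}\les\|v\|_{H^3}\|v\|_{H^4}$ together with the uniform $H^3$ bound $2K$, an $L^2$ stability estimate giving $v^n\to v$ in $W^{1,\infty}$, and passage to the limit in \eqref{EQ427}. Your proposal omits this step entirely, and in the variable-coefficient, inhomogeneous-boundary setting it is not a routine density remark; it occupies roughly half of the paper's proof.
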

Note that a solution $v$ given from Theorem~\ref{T02} in the case $r =
3$ actually belongs to $C([0,T];H^3(\Omega))$ under the assumptions~\eqref{EQ401a} and~\eqref{EQ402a}. 
Indeed, for any $r >2.5$, a solution $v \in L^\infty([0,T]; H^r)$ satisfies $\partial_t v \in L^\infty ([0,T];H^{r-1})$ so that $v \in C_w([0,T];H^r)$. 
Hence, the continuity of $\|v(t)\|_{H^r}$ suffices.
Contrary to the div-curl approach to the estimates in Theorem~\ref{T01}, we can, when $r=3$, derive a differential inequality of the form
  \begin{equation}
   \frac{d}{dt} \|v\|_{H^3}
   \leq
   \|v\|_{H^3}^2
   .
   \llabel{EQzz}
  \end{equation}
The right hand side is $L^1$ in time and so $\|v(t)\|_{H^3}$ is absolutely continuous.

\startnewsection{Proof of the a~priori estimates}{sec02}
In this section, we prepare and give a proof of Theorem~\ref{T01}.
First, note that 
  \begin{equation}
   \|b\|_{H^{r+1}},
   \|J\|_{H^{r+1}} 
   \leq 
   P(
   \|a\|_{H^{r+1}}
   )
   \label{EQ13a}
  \end{equation}
and 
  \begin{equation}
   \|b_t\|_{H^{r-1}},
   \|J_t\|_{H^{r-1}}
   \leq
   P(
   \|a\|_{H^{r+1}},
   \|a_t\|_{H^{r-1}}
   )
   ,
   \label{EQ13b}
  \end{equation}
which can be seen by using the fractional chain rule. 

\subsection{Pressure estimates}
In this section, we prove the following estimate.
\begin{lemma}
\label{L02}
Under the assumptions of Theorem~\ref{T01}, we have 
  \begin{equation}
   \|q\|_{H^{r}}
   \leq
   P(
    \|v\|_{H^r},
    \|\psi\|_{H^r},
    \|\psi_t\|_{H^{r-1}},
    \|a\|_{H^{r+1}},
    \|a_t\|_{H^{r-1}}
   )
   .
   \label{EQ14}
  \end{equation}
\end{lemma}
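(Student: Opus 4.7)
My plan is to derive and then solve an elliptic Neumann problem for $q$. Multiplying the momentum equation \eqref{EQ06}$_1$ by $b_{j i}$ and applying $\partial_j$, the Piola identity $\partial_j b_{j i} = 0$ eliminates any lower-order contribution from $b$; combined with the incompressibility identity $b_{j i}\partial_j v_i = J a_{j i}\partial_j v_i = 0$, the time-derivative contribution collapses to $-(\partial_t b_{j i})\partial_j v_i$, and the pressure term lands in divergence form. This produces
\begin{equation*}
   \partial_j\bigl(A_{j k}\partial_k q\bigr)
   = (\partial_t b_{j i})\partial_j v_i
   - \partial_j\bigl(b_{j i}(v_m - \psi_m) a_{k m}\partial_k v_i\bigr)
   =: F,
\end{equation*}
where $A_{j k} = b_{j i} a_{k i} = (b b^{T})_{j k}/J$ is symmetric and uniformly elliptic thanks to the assumed bounds on $J$. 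For the corresponding Neumann datum, I would time-differentiate the boundary condition \eqref{EQ08}, substitute $\partial_t v_k$ from the momentum equation, and rearrange to obtain $A_{j k} n_j \partial_k q = g$ on $\partial\Omega$, with $g$ an explicit expression linear in $\psi_t$, $a_t$ and bilinear in $v, \psi$.

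Next I want to apply standard variable-coefficient elliptic regularity. Since $A \in H^{r+1}$ is uniformly elliptic and $\int_\Omega q = 0$, the Neumann estimate
\begin{equation*}
   \|q\|_{H^r}
   \les
   \|F\|_{H^{r-2}}
   + \|g\|_{H^{r-3/2}(\partial \Omega)}
\end{equation*}
holds, provided the solvability condition $\int_\Omega F = \int_{\partial \Omega} g$ is satisfied. This is precisely what the hypothesis \eqref{EQ05} encodes, and I expect to verify it by integrating $F$ against~$1$ and tracking terms back through Piola, exactly the verification the paper defers to the end of Section~\ref{sec03}.

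It then remains to estimate $F$ and $g$ by the right-hand side of \eqref{EQ14}. Writing $F$ as the direct term $(\partial_t b)\cdot \nabla v$ plus a divergence, the first lies in $H^{r-1}$ (hence in $H^{r-2}$) because $H^{r-1}$ is an algebra for $r-1 > 3/2$, with $\partial_t b \in H^{r-1}$ by \eqref{EQ13b} and $\nabla v \in H^{r-1}$, while the product $b(v-\psi)a\nabla v$ sits in $H^{r-1}$ by the Kato--Ponce inequality applied to factors in $H^{r+1}$, $H^{r}$, $H^{r+1}$, and $H^{r-1}$; after one derivative this contributes to $\|F\|_{H^{r-2}}$ at the prescribed polynomial level. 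The boundary term $g$ is then controlled in $H^{r-3/2}(\partial \Omega)$ by the trace theorem applied to analogous bulk products, using $\psi_t \in H^{r-1}$ and $a_t \in H^{r-1}$. The main obstacle I anticipate is the non-integer bookkeeping for $r \in (2.5, 3)$: the fractional product estimates must be carried out with some care, and the trace of $g$ must be evaluated in $H^{r-3/2}(\partial\Omega)$, whose index lies just above $1$, which is the borderline range where product laws for traces need to be invoked precisely. Combining everything yields the polynomial bound \eqref{EQ14}.
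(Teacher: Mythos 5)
Your proposal follows essentially the same route as the paper: apply the variable divergence $b_{ji}\partial_j(\cdot)_i$ to the momentum equation and test with $n_jb_{ji}$ on the boundary (using \eqref{EQ08} and the Piola identity to handle the time derivative), then invoke variable-coefficient Neumann elliptic regularity for the uniformly elliptic matrix $b^Ta$ and close with fractional product and trace estimates, exactly as in \eqref{EQ17}, \eqref{EQ20}, Lemma~\ref{L03}, and \eqref{EQ42}--\eqref{EQ44}. The only quibble is your appeal to a solvability condition tied to \eqref{EQ05}: in this a~priori setting $q$ is already given as part of the assumed smooth solution, so the derived Neumann problem is compatible by construction and no such condition is needed (the role of \eqref{EQ05} is in the existence argument of Section~\ref{sec03}, not here), but this does not affect the validity of your estimate.
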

First, we derive an elliptic Neumann problem for the pressure $q$ in the system~\eqref{EQ06}--\eqref{EQ08}. 
Applying the variable divergence $b_{j i} \partial_j(\cdot)_i$ to the velocity equation~\eqref{EQ06}$_1$, we get 
  \begin{equation}
   b_{j i} \partial_j \partial_t v_i 
   + b_{j i} \partial_j ((v_m - \psi_m) a_{k m} \partial_k v_i)
   + b_{j i} \partial_j (a_{k i} \partial_k q)
   = 0
   .
   \label{EQ15}
  \end{equation}
Due to the Piola condition $\partial_j b_{j i} = 0$ and divergence condition $b_{j i} \partial_j v_i = 0$, we have 
  \begin{equation}
   b_{j i} \partial_j (\partial_t v_i)
   =
   -\partial_j (\partial_t b_{j i} v_i)
   .
   \llabel{EQ16}
  \end{equation}
For the other terms in~\eqref{EQ15}, two applications of Piola deliver the elliptic equation
  \begin{equation}
   -\partial_j (b_{j i} a_{k i} \partial_k q)
   =
   - \partial_j (\partial_t b_{j i} v_i)
   + \partial_j(b_{j i}(v_m - \psi_m)a_{k m} \partial_k v_i)
   =: \partial_j \tilde{f}_j
   \inon{in $\Omega$.}
   \label{EQ17}
  \end{equation}
Next, we derive a Neumann condition for $q$ on the boundary. 
Testing~\eqref{EQ06} with $n_j b_{j i}$ and restricting to the boundary gives us
  \begin{equation}
   n_j b_{j i} \partial_t v_i
   + n_j b_{j i} (v_m - \psi_m) a_{k m} \partial_k v_i
   + n_j b_{j i} a_{k i} \partial_k q
   = 0
   .
   \label{EQ18}
  \end{equation}
We can eliminate the time derivative on $v$ through the boundary condition~\eqref{EQ08}.
Indeed, since $n$ is time-independent, we obtain
  \begin{align}
    n_j b_{j i} \partial_t v_i 
    =
    \partial_t(n_j b_{j i} v_i)
    - n_j \partial_t b_{j i} v_i
    =
    \partial_t(n_j b_{j i} \psi_i)
    - n_j \partial_t b_{j i} v_i
    .
   \llabel{EQ19}
  \end{align}
Hence, rearranging~\eqref{EQ18} gives us 
  \begin{equation}
   - n_j b_{j i} a_{k i} \partial_k q 
   =
   \partial_t (n_j b_{j i} \psi_i)
   - n_j \partial_t b_{j i} v_i
   + n_j b_{j i} (v_m - \psi_m) a_{k m} \partial_k v_i
   =: \tilde{g}
   \inon{on $\partial \Omega$.}
   \label{EQ20}
  \end{equation}
  Since $b^Ta$ is uniformly elliptic, the equations~\eqref{EQ17} and~\eqref{EQ20} constitute an elliptic boundary value problem with Neumann boundary conditions. 

Though we shall later reformat the equations to facilitate the proof of local existence, their current form better serves our computations for Lemma~\ref{L02}.

The following lemma contains the elliptic estimate on a Neumann problem of our type.
The proof is standard. 
\begin{lemma}
\label{L03}
Let $d \in C^2(\bar{\Omega})$ and $s \in [1,3]$. Suppose that $u \in H^s \cap C^\infty$ is a solution to the elliptic system
  \begin{align}
   \begin{split}
    &-\partial_j (d_{j k} \partial_k u) 
    = 
    \div{f}
    \,,
    \\&
    -n_j b_{j k} \partial_k u
    = 
    g
   \end{split}
   \llabel{EQ21}
  \end{align}
with $\int_\Omega u = 0$.
Then 
  \begin{equation}
   \|u\|_{H^s}
   \les
   \|f\|_{H^{s - 1}}
   + \|g\|_{H^{s - 3/2}(\partial \Omega)}
   ,
   \llabel{EQ22}
  \end{equation}
where the constant depends polynomially on $\|d\|_{C^2}$. 
\end{lemma}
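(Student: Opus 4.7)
The plan is to treat this as a classical elliptic Neumann problem and establish the estimate by induction on $s$ followed by interpolation. First I would prove the base case $s=1$ via an energy argument, then bootstrap to the integer cases $s=2$ and $s=3$ using standard elliptic regularity for Neumann problems with $C^2$ coefficients, and finally interpolate to cover the non-integer values $s \in (1,3)$. Uniform ellipticity of $d$ (which, in the application, comes from the uniform ellipticity of $b^T a$) underpins the whole argument, and the polynomial dependence on $\|d\|_{C^2}$ is tracked by recording how the coefficients enter at each step.

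For $s=1$, test the equation against $u$ itself. Because $\int_\Omega u = 0$, the Poincar\'e inequality reduces the task to bounding $\|\nabla u\|_{L^2}$. Integration by parts on the left yields the quadratic form $\int_\Omega d_{jk}\partial_k u\,\partial_j u$ plus a boundary term, which is rewritten via the conormal boundary condition; any discrepancy between $d_{jk}$ and $b_{jk}$ in the conormal is a lower-order perturbation controlled by the trace theorem. Integration by parts on the right produces $-\int_\Omega f_j\partial_j u + \int_{\partial\Omega} n_j f_j u$. Ellipticity of $d$, Young's inequality, and the trace inequality then yield $\|u\|_{H^1} \lesssim \|f\|_{L^2} + \|g\|_{H^{-1/2}(\partial\Omega)}$.

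For integer $s \geq 2$, I would introduce a partition of unity and locally flatten $\partial\Omega$, then apply tangential derivatives (or tangential difference quotients) to both the PDE and the boundary condition. The resulting equations for the tangential derivatives of $u$ carry source terms of the form $\partial d \cdot \nabla u$, $\partial^2 d \cdot \nabla u$, and analogous commutators, all bounded because $d \in C^2$ and the lower-order derivatives of $u$ are controlled inductively. The $s=1$ estimate applied to these equations delivers the tangential components of $\nabla^2 u$ (and, for $s=3$, of $\nabla^3 u$); the normal-normal derivatives are then recovered algebraically by solving the PDE for $d_{NN}\partial_N^2 u$, which is admissible because $d_{NN} \geq c > 0$ by ellipticity. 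Non-integer $s$ follows by real or complex interpolation between the integer cases. The main obstacle, modest as it is, lies in the bookkeeping for the commutator terms and in the perturbative treatment of any $d$--$b$ mismatch in the conormal boundary condition; otherwise the argument is textbook elliptic theory for Neumann problems, which is why the authors note that the proof is standard.
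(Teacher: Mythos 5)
The paper offers no proof of this lemma beyond declaring it standard, and your outline is precisely that standard argument: an energy estimate with Poincar\'e for the base case $s=1$, localization and boundary flattening with tangential derivatives (or difference quotients) plus recovery of the normal--normal derivatives from uniform ellipticity for $s=2,3$, and interpolation for non-integer $s$, with the polynomial dependence on $\|d\|_{C^2}$ tracked through the commutator terms. One caution: the ``$b$'' in the lemma's boundary condition is simply the conormal matrix of the operator as it is used in \eqref{EQ17}--\eqref{EQ20} (there $d_{jk}=b_{ji}a_{ki}$ and the boundary operator is exactly $-n_jd_{jk}\partial_k$), so no $d$--$b$ mismatch actually occurs; had there been a genuine mismatch it would be a first-order (principal) discrepancy on the boundary, not a lower-order perturbation absorbable by the trace theorem as your proposal suggests, but since the application never needs this, the argument stands.
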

Now, we are ready to prove Lemma~\ref{L02}. 
We use the multiplicative Sobolev inequalities
  \begin{equation}
   \|fg\|_{H^k} 
   \les
   \|f\|_{H^\ell}\|g\|_{H^m}
   ,
   \llabel{EQ40}
  \end{equation}
for $\ell,m \geq k$, where $\ell + m > 1.5 + k$, and 
  \begin{equation}
   \|fgh\|_{H^k}
   \les
   \|f\|_{H^\ell} \|g\|_{H^m} \|h\|_{H^n}
   ,
   \llabel{EQ41}
  \end{equation}
for $\ell,m,n \geq k$, where $\ell + m + n > 3 + k$. 

\begin{proof}[Proof of Lemma~\ref{L02}]
Since $b^Ta \in H^{r+1} \hookrightarrow C^2(\bar{\Omega})$ is uniformly elliptic, Lemma~\ref{L03} applies, and we have 
  \begin{equation}
   \|q\|_{H^{r}}
   \les
   \|\tilde{f}\|_{H^{r-1}} 
   + \|\tilde{g}\|_{H^{r-3/2}(\partial \Omega)}
   .
   \label{EQ42}
  \end{equation}
With $\tilde{f}$ and $\tilde{g}$ as in~\eqref{EQ17} and~\eqref{EQ20}, respectively, we write 
  \begin{align}
   \begin{split}
    \|\tilde{f}\|_{H^{r-1}} 
    &\les
    \|\partial_t b_{j i} v_i\|_{H^{r-1}}
    + \|b_{j i} (v_m - \psi_m) a_{k m} \partial_k v_i\|_{H^{r-1}}
    \\&\les
    \|b_t\|_{H^{r-1}}\|v\|_{H^{r-1}} 
    + \|b\|_{H^{r-1}}(\|v\|_{H^{r-1}} + \|\psi\|_{H^{r-1}})\|a\|_{H^{r-1}}\|v\|_{H^r}
    \\&\leq
    P(
    \|v\|_{H^r},
    \|\psi\|_{H^r},
    \|a\|_{H^{r+1}},
    \|a_t\|_{H^{r-1}}
    )
    ,
   \end{split}
   \label{EQ43}
  \end{align}
where we used \eqref{EQ13a}--\eqref{EQ13b} to bound $b$ in terms of~$a$. 
For the boundary term, we have 
  \begin{align}
   \begin{split}
    \|\tilde{g}\|_{H^{r-3/2}(\partial\Omega)}
    &\les
    \|\partial_t(n_jb_{j i} \psi_i)\|_{H^{r-3/2}(\partial\Omega)}
    + \|n_j \partial_t b_{j i} v_i\|_{H^{r-3/2}(\partial \Omega)}
    \\&\indeq
    + \|n_j b_{j i} (v_m - \psi_m)a_{k m} \partial_k v_i\|_{H^{r-3/2}(\partial \Omega)}
    \\&\les 
    \|\partial_t (b_{j i} \psi_i)\|_{H^{r-1}} 
    + \|\partial_t b_{j i} v_i\|_{H^{r-1}} 
    + \|b_{j i}(v_m - \psi_m) a_{k m} \partial_k v_i\|_{H^{r-1}}
    \\&\les
    P(
    \|v\|_{H^r},
    \|\psi\|_{H^r},
    \|\psi_t\|_{H^{r-1}},
    \|a\|_{H^{r+1}},
    \|a_t\|_{H^{r-1}}
    )
    .
   \end{split}
   \label{EQ44}
  \end{align}
Combining estimates~\eqref{EQ42}--\eqref{EQ44} completes the proof.
\end{proof}

When we construct a solution to~\eqref{EQ06}--\eqref{EQ08}, it will be advantageous to recast the elliptic problem~\eqref{EQ17} and~\eqref{EQ20} so that the order of derivative falling on $v$ is at most one order less than that of $q$ in their respective equations. 
With this in mind, we utilize the divergence and boundary conditions to make the following simplifications.

For the interior equation~\eqref{EQ17}, we can rewrite the third term as 
  \begin{align}
    \partial_j (b_{j i} (v_m - \psi_m)a_{k m} \partial_k v_i) 
    =
    b_{j i} \partial_j ((v_m - \psi_m) a_{k m} ) \partial_k v_i
    - (v_m -\psi_m)a_{k m} \partial_k b_{j i} \partial_j v_i
    ,
   \llabel{EQ45}
  \end{align}
where we used the product rule and the variable divergence condition $b_{j i} \partial_j v_i = 0$ 
Hence,~\eqref{EQ17} becomes
\begin{equation}
   -\partial_j (b_{j i} a_{k i} \partial_k q) 
   =
   -\partial_j (\partial_t b_{j i} v_i)
   + b_{j i} \partial_j ((v_m - \psi_m)a_{k m}) \partial_k v_i
   - (v_m -\psi_m)a_{k m} \partial_k b_{j i} \partial_j v_i
   \inon{in $\Omega$.}
   \label{EQ46}
  \end{equation}
For the boundary equation, we first write
  \begin{equation}
   \partial_k 
   = 
   (\delta_{k \ell} - n_k n_\ell)\partial_\ell 
   + n_kn_\ell \partial_\ell
   =
   \partial_k^\tau
   + n_k n_\ell \partial_\ell
   \llabel{EQ46a}
  \end{equation}
and change the fourth term in~\eqref{EQ20} to
  \begin{align}
   \begin{split}
    n_j b_{j i} (v_m - \psi_m) a_{k m} \partial_k v_i 
    &=
    (v_m - \psi_m) a_{k m} n_j b_{j i} \partial_k^\tau v_i
    + (v_m - \psi_m) a_{k m} n_kn_\ell n_j b_{j i} \partial_\ell v_i
    \\&=
    (v_m - \psi_m) a_{k m} \partial_k^\tau(n_j b_{j i} \psi_i) 
    - (v_m - \psi_m) a_{k m} \partial_k^\tau (n_j b_{j i}) v_i
    ,
   \end{split}
   \llabel{EQ47}
  \end{align}
where the second term in the first line vanishes by the boundary condition~\eqref{EQ08}, and the same boundary condition is substituted into the first term after using the product rule.
We obtain
  \begin{align}
   \begin{split}
    -n_j b_{j i} a_{k i} \partial_k q
    &=
    \partial_t (n_j b_{j i} \psi_i)
    - n_j \partial_t b_{j i} v_i
    + (v_m - \psi_m) a_{k m} \partial_k^\tau(n_j b_{j i} \psi_i)
    \\&\indeq
    - (v_m - \psi_m) a_{k m} \partial_k^\tau(n_j b_{j i}) v_i
   \end{split}
   \inon{on $\partial\Omega$.}
   \label{EQ48}
  \end{align}
The equations~\eqref{EQ46} and~\eqref{EQ48} are of the desired form. 

\subsection{Vorticity estimates}
We are now interested in analyzing the variable vorticity 
  \begin{equation}
   \zeta_i 
   =
   \eps_{ijk}b_{\ell j} \partial_\ell v_k
   ,
   \llabel{EQ49}
  \end{equation}
where $\eps_{ijk}$ denotes the Levi-Civita symbol.
First, we derive a variable vorticity equation for $\zeta$ by applying $\eps_{ijk}b_{\ell j}\partial_\ell(\cdot)_k$ to
  \begin{equation}
   \partial_t v_k 
   + (v_m - \psi_m) a_{r m} \partial_r v_k 
   + a_{r k} \partial_r q
   = 0
   \comma k= 1,2,3.
   \label{EQ50}
  \end{equation}
We handle the derivation term-by-term. 
Applying the variable curl to the first term in~\eqref{EQ50}, we get
  \begin{equation}
    \eps_{ijk} b_{\ell j} \partial_\ell (\partial_t v_k)
    =
    \partial_t\zeta_i 
    - \eps_{ijk} \partial_t b_{\ell j} \partial_\ell v_k
    \comma i = 1,2,3.
    \label{EQ51}
  \end{equation}
For the pressure term, we obtain
  \begin{align}
   \begin{split}
    \eps_{ijk} b_{\ell j}\partial_\ell (a_{rk} \partial_r q)
    &=
    \eps_{ijk} b_{\ell j} a_{r k} \partial_{\ell r} q 
    + \eps_{ijk} b_{\ell j} \partial_\ell a_{r k} \partial_r q
    \\&=
    J^{-1} \eps_{ijk} b_{\ell j} b_{r k} \partial_{\ell r} q
    + \eps_{ijk} b_{\ell j} \partial_\ell a_{r k} \partial_r q
    \\&=
    \eps_{ijk} b_{\ell j} \partial_\ell a_{r k} \partial_r q
    ,
   \end{split}
   \label{EQ52}
  \end{align}
where the first term in the second line vanishes due to symmetry.

The advection term is much more cumbersome. 
Applying the variable curl, we get 
  \begin{align}
   \begin{split}
    \eps_{ijk} b_{\ell j} \partial_\ell ((v_m - \psi_m) a_{r m} \partial_r v_k )
    &= 
    \eps_{ijk} b_{\ell j} \partial_\ell v_m a_{r m} \partial_r v_k
    - \eps_{ijk} b_{\ell j} \partial_\ell \psi_m a_{r m} \partial_r v_k
    \\& \indeq
    + (v_m - \psi_m) \eps_{ijk} b_{\ell j} \partial_\ell a_{r m} \partial_r v_k
    + (v_m - \psi_m) a_{r m} \eps_{ijk} b_{\ell j} \partial_{\ell r} v_k
    \\&=
    g_1^i + g_2^i + g_3^i + g_4^i
    .
   \end{split}
   \label{EQ53}
  \end{align}
We are particularly interested in $g_1$ and $g_4$, while the other terms are kept as forcing terms.
First, observe that
  \begin{equation}
   g_4^i
   =
   (v_m - \psi_m) a_{r m} \partial_r \zeta_i 
   - (v_m - \psi_m)a_{r m} \eps_{ijk} \partial_r b_{\ell j} \partial_\ell v_k
   .
   \label{EQ54}
  \end{equation}
Next, we claim that, in fact, $g_1^i = \zeta_j a_{p j} \partial_p v_i$. 
For the sake of brevity, we demonstrate this for~$i=1$.
Expanding the sum in $m$ for $J^{-1}g_i^i$, we obtain
  \begin{align}
   \begin{split}
    \eps_{1jk} b_{\ell j} \partial_\ell v_m b_{r m} \partial_r v_k
    &= 
    b_{\ell 2} \partial_\ell v_m b_{r m} \partial_r v_3
    - b_{\ell 3} \partial_\ell v_m b_{r m} \partial_r v_2
    \\&=
    (
    (b_{\ell 2} \partial_\ell v_1) (b_{r 1} \partial_r v_3)
    + (b_{\ell 2} \partial_\ell v_2) (b_{r 2} \partial_r v_3)
    + (b_{\ell 2} \partial_\ell v_3) (b_{r 3} \partial_r v_3)
    )
    \\& \indeq
    -(
    (b_{\ell 3} \partial_\ell v_1) (b_{r 1} \partial_r v_2)
    + (b_{\ell 3} \partial_\ell v_2) (b_{r 2} \partial_r v_2)
    + (b_{\ell 3}\partial_\ell v_3) (b_{r 3} \partial_r v_2)
    )
    .
   \end{split}
   \label{EQ55}
  \end{align}
We group together the first and fourth terms in the last two lines of~\eqref{EQ55}. Then we add and subtract $(b_{r 3}\partial_r v_1)(b_{\ell 2}\partial_\ell v_1)$, noting the ability to interchange the variables $\ell$ and~$r$, that is,
  \begin{align}
   \begin{split}
    &(b_{r 1} \partial_rv_3) (b_{\ell 2} \partial_\ell v_1)
    - (b_{r 1} \partial_r v_2) (b_{\ell 3} \partial_\ell v_1)
    \\&\indeq=
    (
     b_{r 1} \partial_r v_3
     - b_{r3}\partial_r v_1
    ) 
    b_{\ell 2} \partial_\ell v_1
    \\&\indeq\indeq
    -(
       b_{r 1} \partial_r v_2 - b_{r 2}\partial_r v_1
     )
    b_{\ell 3} \partial_\ell v_1
    \\&\indeq= 
    - (\eps_{2jk} b_{r j} \partial_r v_k)(b_{\ell 2} \partial_\ell v_1)
    - (\eps_{3jk} b_{r j} \partial_r v_k)(b_{\ell 3}\partial_\ell v_1)
    \\&\indeq=
    - \zeta_2 b_{p 2} \partial_p v_1
    - \zeta_3 b_{p 3} \partial_p v_1
    .
   \end{split}
   \label{EQ56}
  \end{align}
As we might expect, the remaining terms in the last two lines of~\eqref{EQ55} simplify to $-\zeta_1 b_{p 1}\partial_p v_1$. Indeed, 
  \begin{align}
   \begin{split}
    &(
    (b_{\ell 2} \partial_\ell v_2) (b_{r 2} \partial_r v_3)
    + (b_{\ell 2} \partial_\ell v_3) (b_{r 3} \partial_r v_3)
    )
    - (
    (b_{\ell 3} \partial_\ell v_2) (b_{r 2} \partial_r v_2)
    + (b_{\ell 3} \partial_\ell v_3) (b_{r 3} \partial_r v_2)
    )
    \\&\indeq\indeq=
    (b_{r 2}\partial_r v_3)(b_{\ell 2}\partial_\ell v_2)
    - (b_{\ell 3} \partial_\ell v_2)(b_{r 2}\partial_r v_2)
    + (b_{\ell 2} \partial_\ell v_3)(b_{r 3}\partial_r v_3)
    - (b_{r 3} \partial_r v_2)(b_{\ell 3} \partial_\ell v_3)
    \\&\indeq\indeq=
    (
    b_{r 2} \partial_r v_3 
    - b_{\ell 3} \partial_\ell v_2
    ) 
    b_{p2}\partial_p v_2
    + ( 
    b_{\ell 2}\partial_\ell v_3 
    - b_{r3}\partial_r v_2
    ) 
    b_{p3}\partial_p v_3
    \\&\indeq\indeq=
    ( 
    b_{r 2}\partial_r v_3 
    - b_{r 3}\partial_r v_2
    ) 
    (
    b_{p 2} \partial_p v_2 
    + b_{p 3} \partial_p v_3
    )
    \\&\indeq\indeq= 
    - (\eps_{1jk} b_{r j} \partial_r v_k) (b_{p_1}\partial_p v_1)
    \\&\indeq\indeq=
    -\zeta_1 b_{p 1} \partial_p v_1
    ,
   \end{split}
   \label{EQ57}
  \end{align}
where we used the divergence condition $b_{j i} \partial_j v_i = 0$ in the second-to-last line. 
These computations can be repeated for $i=2,3$ to get 
  \begin{equation}
   J^{-1}g_1^i 
   = 
   \eps_{ijk} b_{\ell j} \partial_\ell v_m b_{r m} \partial_r v_k
   =
   -\zeta_j b_{p j} \partial_p v_i
   .
   \label{EQ58}
  \end{equation}
Combining computations from~\eqref{EQ51}--\eqref{EQ54} and~\eqref{EQ56}--\eqref{EQ57}, we have that $\zeta$ satisfies
  \begin{equation}
   \partial_t \zeta_i
   + (v_m - \psi_m)a_{r m} \partial_r \zeta_i
   =
   \zeta_p a_{m p}\partial_m v_i
   + f_i
   \inon{in $\Omega$}
   \comma i = 1,2,3,
   \llabel{EQ59}
  \end{equation}
where we define 
  \begin{align}
   \begin{split}
    f_i 
    &=
    \eps_{ijk} \partial_t b_{\ell j} \partial_\ell v_k
    + (\eps_{ijk} b_{\ell j} \partial_\ell \psi_m) a_{r m} \partial_r v_k
    - (v_m - \psi_m) (\eps_{ijk} b_{\ell j} \partial_\ell a_{r m}) \partial_r v_k
    \\&\indeq
    - (v_m-\psi_m)a_{r m} (\eps_{ijk} \partial_r b_{\ell j} \partial_\ell v_k)
    - \eps_{ijk} b_{\ell j} (\partial_\ell a_{r k}) \partial_r q
    .
   \end{split}
   \label{EQ60}
  \end{align}
Notably, the forcing term $f_i$ contains a lower-order pressure term. 

Now, we turn to deriving estimates on $\zeta$ in~$H^{r-1}$.
We multiply~\eqref{EQ58} by $J$ to get 
  \begin{equation}
   J\partial_t \zeta_i
   + (v_m - \psi_m) b_{r m} \partial_r \zeta_i
   =
   \zeta_p b_{m p} \partial_m v_i
   + Jf_i
   .
   \label{EQ60a}
  \end{equation}
In order to perform non-tangential estimates in the Sobolev space $H^{r-1}(\Omega)$, we extend~\eqref{EQ60a} to~$\mathbb{R}^3$.  
Define the map $f \mapsto \tilde{f}$ as the $H^s(\Omega) \to H^s(\mathbb{R}^3)$ extension operator for $s \in [0,4]$ such that $\tilde{f}$ vanishes outside the $\Omega$ neighborhood $V_\alpha = \{x \in \mathbb{R}^3 \colon \dist( x,\Omega) < \alpha\}$ where $\alpha \geq 1$. 
Also, define $f\mapsto \bar{f}$ as the $H^s(\Omega) \to H^s(\mathbb{R}^3)$ extension operator for scalar valued functions such that $\bar{J} > c/2$ on $V_{\alpha}$ and vanishes outside $V_{2\alpha}$ where $c$ is such that $J > c$ for all $(x,t)\in\Omega \times [0,T]$.
This technical point shall be used in the proof for Lemma~\ref{L05}.

We now aim to obtain a~priori estimates for
  \begin{equation}
   \bar{J}\partial_t \theta_i 
   + (\tilde{v}_m - \tilde{\psi}_m)\tilde{b}_{r m}\partial_r \theta_i
   =
   \theta_p \tilde{b}_{m p} \partial_m \tilde{v}_i
   + \bar{J}\tilde{f}_i
   \inon{in $\mathbb{R}^3$}
   \comma i = 1,2,3.
   \label{EQ61}
  \end{equation}
\begin{lemma}
\label{L04}
A solution  $\theta \colon \mathbb{R}^3 \times [0,T] \to \mathbb{R}^3$ to~\eqref{EQ61} with $\theta(0) = \tilde{\zeta}(0)$ extends $\zeta$ on $\Omega \times [0,T]$. 
\end{lemma}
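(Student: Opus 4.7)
The plan is to observe that $\theta|_\Omega$ and $\zeta$ both satisfy the same transport-type PDE with matching initial data, and then deduce equality via a uniqueness argument enabled by the tangency of the transport velocity on $\partial\Omega$.

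First, on $\Omega \times [0,T]$ all extensions coincide with their originals: $\tilde{v} = v$, $\tilde{\psi} = \psi$, $\tilde{b} = b$, $\bar{J} = J$, and $\tilde{f} = f$, by construction of the extension operators. Consequently, if $\theta$ is any solution of~\eqref{EQ61} on $\mathbb{R}^3$, then the restriction $w := \theta|_{\Omega}$ satisfies
\begin{equation*}
 J \partial_t w_i + (v_m - \psi_m) b_{rm} \partial_r w_i = w_p b_{mp} \partial_m v_i + J f_i \quad \text{in } \Omega,
\end{equation*}
which is exactly~\eqref{EQ60a}, with the initial condition $w(\cdot,0) = \tilde{\zeta}(\cdot,0)|_\Omega = \zeta(\cdot,0)$. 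Since $\zeta$ solves the same initial value problem on $\Omega$, it suffices to establish uniqueness for~\eqref{EQ60a}.

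For this, I would set $u := w - \zeta$, obtaining a homogeneous linear transport equation for $u$ with vanishing initial data. A routine $L^2$ energy estimate---multiplying by $u_i$, integrating over $\Omega$, and integrating by parts in the transport term using the Piola identity $\partial_r b_{rm} = 0$---yields a bulk term controlled by $\|u\|_{L^2}^2$ (via the regularity of $v$, $\psi$, and $b$) together with a boundary flux
\begin{equation*}
 \tfrac{1}{2} \int_{\partial\Omega} (v_m - \psi_m) b_{rm} n_r |u|^2\, dS.
\end{equation*}

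The key structural point is that this boundary flux vanishes identically: the boundary condition~\eqref{EQ08}, together with the identity $a = J^{-1} b$ from~\eqref{EQ03a}, yields $(v_m - \psi_m) b_{rm} n_r = 0$ on $\partial\Omega$, so the characteristic vector field is tangent to the boundary and no data enters $\Omega$ from outside. Combined with the uniform lower bound on $J$, Gr\"onwall's inequality then forces $u \equiv 0$, so that $\theta|_\Omega = \zeta$. The main conceptual step is precisely the verification of this tangency, which is what makes the restriction of the extended flow agree with the intrinsic vorticity evolution; once that is in hand, the rest is a standard transport uniqueness argument.
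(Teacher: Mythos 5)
Your proposal is correct and follows essentially the same route as the paper: subtract the two solutions on $\Omega$, perform an $L^2(\Omega)$ energy estimate (with the weight $J$), observe that the boundary flux vanishes because $(v_m-\psi_m)b_{rm}n_r = J(v_m-\psi_m)a_{rm}n_r = 0$ by~\eqref{EQ08}, and conclude by Gr\"onwall using the uniform positivity of $J$. The only detail worth making explicit is the term $\tfrac12\int_\Omega \partial_t J\,|u|^2$ arising when the weighted energy is differentiated, which is controlled by the assumed bound on $J_t$ (i.e.\ on $a,a_t$), exactly as in the paper.
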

\begin{proof}
Define $\sigma = \zeta - \theta$. 
Then $\sigma$ satisfies 
  \begin{equation}
   J\partial_t \sigma_i
   + (v_m - \psi_m)b_{r m} \partial_r \sigma_i
   =
   \sigma_p b_{m p} \partial_m v_i
   \inon{in $\Omega$}
   \comma i=1,2,3.
   \llabel{EQ62}
  \end{equation}
We test with $\sigma$ in $L^2(\Omega)$, leading to
  \begin{align}
   \begin{split}
    \frac{1}{2} \partial_t \int_\Omega J |\sigma|^2 
    &=
    - \int_\Omega (v_m - \psi_m) b_{r m} \partial_r \sigma_i \sigma_i
    + \int_\Omega \sigma_p b_{m p} \partial_m v_i \sigma_i
    + \frac{1}{2} \int_{\Omega} \partial_t J |\sigma|^2
    \\&=
    I_1 + I_2 + I_3
    .
   \end{split}
   \llabel{EQ63}
  \end{align}
For $I_1$, integrating by parts gives 
  \begin{equation}
    I_1 
    = 
    \frac{1}{2}\int_\Omega \partial_r((v_m - \psi_m)b_{r m}) \sigma_i^2
    - \frac{1}{2} \int_{\partial \Omega} (v_m - \psi_m) b_{r m}n_r \sigma_i^2
    .
    \llabel{EQ64}
  \end{equation}
By the boundary condition~\eqref{EQ08}, the second integral vanishes, and since $H^r \hookrightarrow C^1(\bar{\Omega})$, 
  \begin{equation}
   |I_1|
   \les
   P(
     \|v\|_{H^r},
     \|\psi\|_{H^r},
     \|a\|_{H^{r+1}}
   )
   \|\sigma\|_{L^2}^2
   .
   \llabel{EQ65}
  \end{equation}
Note that $I_2$ and $I_3$ also have a similar bound due to embedding. 
Lastly, since $J$ is uniformly bounded below, we get
  \begin{equation}
   \partial_t \int_\Omega J |\sigma|^2
   \les
   P(
     \|v\|_{H^r},
     \|\psi\|_{H^r},
     \|a\|_{H^{r+1}},
     \|a_t\|_{H^{r-1}}
   )
   \int_\Omega J |\sigma|^2
   .
   \llabel{EQ66}
  \end{equation}
Since $\sigma(0) = 0$, applying Gronwall's lemma completes the proof.
\end{proof}

Next, we perform a~priori estimates for~$\theta$. 
Since $\bar{J}$ is uniformly bounded below, we have
\begin{equation}
   \|\zeta\|_{H^{r-1}(\Omega)}
   \les 
   \|\theta\|_{H^{r-1}(\mathbb{R}^3)}
   \les
   Y
   \llabel{EQ67}
  \end{equation}
and 
  \begin{equation}
   \|\zeta(0)\|_{H^{r-1}}
   \les 
   Y(0)
   \les
   \|\zeta(0)\|_{H^{r-1}}
   ,
   \llabel{EQ68}
  \end{equation}
where we define 
  \begin{equation}
   Y
   =
   \int_{\mathbb{R}^3} \bar{J}|\Lambda^{r-1}\theta|^2
   \llabel{EQ69}
  \end{equation}
with $\Lambda = (1-\Delta)^{1/2}$ on $\mathbb{R}^3 \times [0,T]$. 
Hence, to bound $\zeta$ by $\|v\|_{H^r}$ and assumed data for small time $[0,T_0]$, it suffices to prove the following lemma.
\begin{lemma}
\label{L05}
We have 
  \begin{equation}
   \partial_t Y
   \les
   P(
     \|v\|_{H^r},
     \|\psi\|_{H^r},
     \|\psi_t\|_{H^{r-1}},
     \|a\|_{H^{r+1}},
     \|a_t\|_{H^{r-1}}
   )
   (Y + Y^{1/2})
   .
   \llabel{EQ70}
  \end{equation}
\end{lemma}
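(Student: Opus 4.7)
The plan is to prove the estimate via a weighted $H^{r-1}$-energy estimate on~\eqref{EQ61}. I would apply the Bessel-type operator $\Lambda^{r-1}$ to both sides of~\eqref{EQ61}, test against $\Lambda^{r-1}\theta_i$ in $L^2(\mathbb{R}^3)$, and exploit that $\bar J$ is uniformly bounded below on the effective support of $\theta$, so that $\|\Lambda^{r-1}\theta\|_{L^2(\mathbb{R}^3)}^2 \lesssim Y$. Differentiating $Y$ in time, using~\eqref{EQ61} to substitute for $\bar J\,\Lambda^{r-1}\partial_t\theta_i$, and integrating by parts in the transport term $(\tilde v_m - \tilde\psi_m)\tilde b_{rm}\partial_r\Lambda^{r-1}\theta_i$ would produce the identity
\begin{align*}
 \tfrac12 \partial_t Y
 &= \tfrac12\int \partial_t \bar J\,|\Lambda^{r-1}\theta|^2
  + \tfrac12\int \partial_r\bigl((\tilde v_m - \tilde\psi_m)\tilde b_{rm}\bigr)|\Lambda^{r-1}\theta|^2\\
 &\quad - \int \bigl[\Lambda^{r-1}, (\tilde v_m - \tilde\psi_m)\tilde b_{rm}\bigr]\partial_r \theta_i\cdot \Lambda^{r-1}\theta_i
  - \int \bigl[\Lambda^{r-1}, \bar J\bigr]\partial_t \theta_i\cdot \Lambda^{r-1}\theta_i\\
 &\quad + \int \Lambda^{r-1}\bigl(\theta_p \tilde b_{mp}\partial_m \tilde v_i\bigr)\Lambda^{r-1}\theta_i
  + \int \Lambda^{r-1}\bigl(\bar J\,\tilde f_i\bigr)\Lambda^{r-1}\theta_i.
\end{align*}

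The two leading integrals are bounded by $P(\|v\|_{H^r}, \|\psi\|_{H^r}, \|a\|_{H^{r+1}}, \|a_t\|_{H^{r-1}})\,Y$ by invoking the Sobolev embedding $H^{r-1} \hookrightarrow L^\infty$ (valid for $r > 5/2$) to put $\partial_t\bar J$ and $\partial_r((\tilde v - \tilde\psi)\tilde b)$ in $L^\infty$, together with~\eqref{EQ13a}--\eqref{EQ13b}. The stretching integral $\int \Lambda^{r-1}(\theta_p\tilde b_{mp}\partial_m\tilde v_i)\Lambda^{r-1}\theta_i$ is bounded via the multiplicative Sobolev estimates stated just before the proof of Lemma~\ref{L02}, producing another $P(\|v\|_{H^r}, \|a\|_{H^{r+1}})\,Y$ contribution after pairing with $\Lambda^{r-1}\theta_i$. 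The forcing integral is controlled using the explicit form~\eqref{EQ60} of $f$ and Lemma~\ref{L02} to bound $\|q\|_{H^r}$; this gives
\[
  \|\bar J\,\tilde f\|_{H^{r-1}} \lesssim P(\|v\|_{H^r}, \|\psi\|_{H^r}, \|\psi_t\|_{H^{r-1}}, \|a\|_{H^{r+1}}, \|a_t\|_{H^{r-1}}),
\]
and pairing with $\Lambda^{r-1}\theta_i$ produces the $Y^{1/2}$ contribution in the final inequality.

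The main obstacle is handling the two commutators in the fractional regime $r \in (5/2, 3)$. For the transport commutator I would invoke the Kato-Ponce commutator estimate; since $\|\nabla\theta\|_{L^\infty}$ is not directly controlled by $Y^{1/2}$ when $r \leq 7/2$, the balance is maintained by exploiting the higher regularity $\tilde b \in H^{r+1}$ and $\tilde v, \tilde\psi \in H^r$ in a balanced form of Kato-Ponce, yielding a bound $\lesssim P(\|v\|_{H^r}, \|\psi\|_{H^r}, \|a\|_{H^{r+1}})\,Y^{1/2}$, which contributes $P(\cdots)\,Y$ after pairing with $\Lambda^{r-1}\theta$. For the time-derivative commutator $[\Lambda^{r-1}, \bar J]\partial_t\theta$, I would substitute $\bar J\partial_t\theta$ directly from~\eqref{EQ61}, expressing it as a combination of the spatial transport, stretching, and forcing terms, each of which is already controlled as above; the positivity of $\bar J$ on the effective support of $\theta$ renders this substitution harmless. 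Summing all the contributions yields the claimed inequality.
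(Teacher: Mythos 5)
Your proposal is correct and follows essentially the same route as the paper's proof: the same weighted $\Lambda^{r-1}$ energy identity with integration by parts in the transport term, fractional Kato--Ponce commutator bounds in place of an unavailable $\|\nabla\theta\|_{L^\infty}$, substitution of the equation (divided by $\bar J$, harmless on the support of $\theta$) to control $\partial_t\theta$ in the $[\Lambda^{r-1},\bar J]$ commutator, and the pressure estimate of Lemma~\ref{L02} to bound $\bar J\tilde f$ in $H^{r-1}$, producing the $Y^{1/2}$ term. The only cosmetic difference is that you treat the stretching term as a single product via the $H^{r-1}$ algebra property rather than splitting it into commutator plus leading part as in the paper.
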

\begin{proof}
All integrals below are taken over $\mathbb{R}^3$ unless indicated otherwise.
Applying $\Lambda^{r-1}$ to~\eqref{EQ61} and testing with $\Lambda^{r-1}\theta$ in $L^2(\mathbb{R}^3)$, we obtain 
  \begin{align}
   \begin{split}
    \frac{1}{2} \frac{dY}{dt} 
    &= 
    -\int \Big(
      \Lambda^{r-1}((\tilde{v}_m - \tilde{\psi}_m)\tilde{b}_{r m} \partial_r \theta_i) 
      - (\tilde{v}_m - \tilde{\psi}_m)\tilde{b}_{r m} \Lambda^{r-1}\partial_r \theta_i
    \Big) 
    \Lambda^{r-1}\theta_i
    \\&\indeq
    - \frac{1}{2}\int (\tilde{v}_m - \tilde{\psi}_m)\tilde{b}_{r m} \partial_r (\Lambda^{r-1}\theta_i)^2
    \\&\indeq
    + \int \Big(
      \Lambda^{r-1}(\theta_p \tilde{b}_{m p} \partial_m \tilde{v}_i)
      - \theta_p \tilde{b}_{m p} \Lambda^{r-1}\partial_m \tilde{v}_i
    \Big)
    \Lambda^{r-1}\theta_i
    \\&\indeq
    + \int \theta_p \tilde{b}_{m p}\partial_m \Lambda^{r-1}\tilde{v}_i \Lambda^{r-1}\theta_i
    \\&\indeq
    - \int \Big(
      \Lambda^{r-1}(\bar{J}\partial_t \theta_i)
      - \bar{J} \Lambda^{r-1}(\partial_t \theta_i)
    \Big)
    \Lambda^{r-1}\theta_i
    \\&\indeq
    + \frac{1}{2}\int \partial_t \bar{J}|\Lambda^{r-1}\theta |^2
    + \int \Lambda^{r-1}(\bar{J} \tilde{f}) \Lambda^{r-1}\theta_i
    \\&=
    I_1 + \cdots + I_7
    .
   \end{split}
   \label{EQ71}
  \end{align}
By~\cite[Theorem 1.9]{Li}, we get
  \begin{align}
   \begin{split}
    |I_1|
    &\les
    \Big(
      \|\Lambda^{r-1}((\tilde{v}_m - \tilde{\psi}_m)\tilde{b}_{r m})\|_{L^6} \|\Lambda \theta\|_{L^3}
      + \|\Lambda (\tilde{v}_m - \tilde{\psi}_m)\tilde{b}_{r m}\|_{L^\infty}\|\Lambda^{r-1}\theta\|_{L^2}
    \Big)
    \|\Lambda^{r-1}\theta\|_{L^2}
    \\&\les
    \Big(
      (\|\tilde{v}\|_{H^r} + \|\tilde{\psi}\|_{H^r})\|b\|_{H^{r+1}}\|\theta\|_{H^{3/2}}
      + (\|\tilde{v}\|_{H^{r-1}} + \|\tilde{\psi}\|_{H^{r-1}})\|\tilde{b}\|_{H^{r-1}}\|\Lambda^{r-1}\theta\|_{L^2}
    \Big)
    \|\Lambda^{r-1}\theta\|_{L^2}
    \\&\leq
    P(
      \|v\|_{H^r},
      \|\psi\|_{H^r},
      \|a\|_{H^{r+1}}
    )
    \|\Lambda^{r-1}\theta\|_{L^2}^2
    ,
   \end{split}
   \llabel{EQ72}
  \end{align}
where we used the embeddings $H^1 \hookrightarrow L^6$ and $H^{1/2} \hookrightarrow L^3$ as well as $H^{r-1}\hookrightarrow L^\infty$. 
For $I_2$, we integrate by parts and get 
  \begin{align}
   \begin{split}
    |I_2|
    &\les
    (\|\tilde{v}\|_{H^r} + \|\tilde{\psi}\|_{H^r})\|\tilde{b}\|_{H^{r+1}} \|\Lambda^{r-1}\theta\|_{L^2}^2
    \\&\leq
    P(
      \|v\|_{H^r},
      \|\psi\|_{H^r},
      \|a\|_{H^{r+1}}
    )
    \|\Lambda^{r-1}\theta\|_{L^2}^2
    .
   \end{split}
   \llabel{EQ73}
  \end{align}
For $I_3$, we follow a similar approach as for~$I_1$. We have
  \begin{align}
   \begin{split}
    I_3
    &\les
    \Big(
            \|\Lambda^{r-1}(\theta_p \tilde{b}_{m p})\|_{L^2} \|\Lambda \tilde{v}_i\|_{L^\infty}
            + \|\Lambda(\theta_p \tilde{b}_{m p})\|_{L^3} \|\Lambda^{r-1}\tilde{v}\|_{L^6}
    \Big)
    \|\Lambda^{r-1}\theta\|_{L^2}
    \\&\les
    \Big(
            \|\theta_p \tilde{b}_{p m}\|_{H^{r-1}} \|\Lambda \tilde{v}_i\|_{H^{r-1}}
            + \|\theta_p \tilde{b}_{p m}\|_{H^{r-1}} \|\tilde{v}\|_{H^r}
    \Big)
    \|\Lambda^{r-1}\theta\|_{L^2}
    \\&\leq
    P(
      \|v\|_{H^r},
      \|b\|_{H^{r+1}}
    )
    \|\Lambda^{r-1}\theta\|_{L^2}
    .
   \end{split}
   \llabel{EQ74}
  \end{align}
For $I_4$, we use the embedding $H^{r-1} \hookrightarrow L^\infty$ and apply H\"older's inequality to get
  \begin{align}
   \begin{split}
    |I_4|
    &\les
    \|\theta\|_{H^{r-1}} \|\tilde{b}\|_{H^{r+1}}\int |\partial_m \Lambda^{r-1}\tilde{v}_i| |\Lambda^{r-1}\theta_i|
    \\&\les
    \|\tilde{b}\|_{H^{r+1}} \|\tilde{v}\|_{H^r}\|\theta\|_{H^{r-1}}^2
    \leq
    P(
      \|v\|_{H^r},
      \|a\|_{H^{r+1}}
    )
    \|\Lambda^{r-1}\theta\|_{L^2}^2
    .
   \end{split}
   \llabel{EQ75}
  \end{align}
For $I_5$, we have 
  \begin{align}
   \begin{split}
   |I_5| 
   &\les
   \Big( 
           \|\Lambda^{r-1}\bar{J}\|_{L^6}\|\partial_t \theta\|_{L^3}
           + \|\Lambda \bar{J}\|_{L^\infty} \|\Lambda^{r-2}\partial_t \theta\|_{L^2}
   \Big)
   \|\Lambda^{r-1}\theta\|_{L^2}
   \\&\les
   \|J\|_{H^{r+1}}\|\Lambda^{r-2}\partial_t\theta\|_{L^2}\|\Lambda^{r-1}\theta\|_{L^2}
   \\&\leq
   P(
     \|v\|_{H^r},
     \|\psi\|_{H^r},
     \|\psi_t\|_{H^{r-1}},
     \|a\|_{H^{r+1}},
     \|a_t\|_{H^{r-1}}
   )
   (
    \|\Lambda^{r-1}\theta\|_{L^2}^2
    + \|\Lambda^{r-1}\theta\|_{L^2}
   )
   ,
   \end{split}
   \llabel{EQ76}
  \end{align}
which is clear once we have shown that 
  \begin{align}
   \begin{split}
    \|\Lambda^{r-2}\partial_t \theta\|_{L^2} 
    &\les
    P(
      \|v\|_{H^r},
      \|\psi\|_{H^r},
      \|a\|_{H^{r+1}}
    )
    \|\Lambda^{r-1}\theta\|_{L^2}
    \\&\indeq
    + P(
      \|v\|_{H^r},
      \|\psi\|_{H^r},
      \|\psi_t\|_{H^{r-1}},
      \|a\|_{H^{r+1}},
      \|a_t\|_{H^{r-1}}
    )
    .
   \end{split}
   \llabel{EQ77}
  \end{align}
To see this, take the $\|\Lambda^{r-2}\cdot \|_{L^2}$ norm of 
  \begin{equation}
   \partial_t \theta_i 
   + \bar{J}^{-1} (\tilde{v}_m 
   - \tilde{\psi}_m)\tilde{b}_{rm} \partial_r \theta_i
   =
   \bar{J}^{-1}\theta_p \tilde{b}_{m p} \partial_m \tilde{v}_i
   + \tilde{f}_i
   ,
   \label{EQ78}
  \end{equation}
and apply the fractional Leibniz rule.
The only term in~\eqref{EQ78} requiring special attention is~$\tilde{f}$.
Defined in~\eqref{EQ60}, we get the stronger inequality
  \begin{equation}
   \|\Lambda^{r-1}\tilde{f}\|_{L^2}
   \les
   P(
     \|v\|_{H^r},
     \|\psi\|_{H^r},
     \|\psi_t\|_{H^{r-1}},
     \|a\|_{H^{r+1}},
     \|a_t\|_{H^{r-1}}
   )
   ,
   \label{EQ80}
  \end{equation}
where we used the elliptic estimate~\eqref{EQ14}. 
Note that~\eqref{EQ78} is well-defined since $\bar{J}$ is positive on the support of all other terms.
For $I_6$, we immediately get 
  \begin{equation}
   |I_6|
   \les
   P(
     \|a\|_{H^{r+1}},
     \|a_t\|_{H^{r-1}}
   )
   \|\Lambda^{r-1}\theta\|_{L^2}^2
   .
   \llabel{EQ81}
  \end{equation}
Finally, using~\eqref{EQ80}, we have 
  \begin{equation}
   |I_7|
   \les
   P(
     \|v\|_{H^r},
     \|\psi\|_{H^r},
     \|\psi_t\|_{H^{r-1}},
     \|a\|_{H^{r+1}},
     \|a_t\|_{H^{r-1}}
   )
   \|\Lambda^{r-1}\theta\|_{L^2}
   .
   \llabel{EQ82}
  \end{equation}
The equation~\eqref{EQ71} and the estimates for $I_1$--$I_7$ complete the proof.
\end{proof}

\subsection{Velocity estimates}
Our method of acquiring estimates on $v$ critically relies on a variable div-curl lemma. 
\begin{lemma}
\label{L06}
For a fixed time $t \in [0,T]$, we have
  \begin{equation}
   \|v\|_{H^r}
   \les
   \|b_{j i} \partial_j v_i\|_{H^{r-1}}
   + \|\zeta\|_{H^{r-1}}
   + \|v_kb_{j k}n_j \|_{H^{r-1/2}(\partial\Omega)}
   + \|v\|_{L^2}
   ,
   \label{EQ83}
  \end{equation}
where the constant in the bound depends on a polynomial of~$\|b\|_{H^{r+1}}$.
\end{lemma}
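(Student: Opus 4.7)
The plan is to view the system
\begin{equation*}
b_{ji}\partial_j v_i = D, \quad \epsilon_{ijk}b_{\ell j}\partial_\ell v_k = \zeta_i, \quad v_k b_{jk} n_j = g \text{ on }\partial\Omega,
\end{equation*}
(writing $D$ and $g$ for the variable divergence and boundary quantities on the right of~\eqref{EQ83}) as a first-order overdetermined elliptic boundary value problem for $v$, and then to apply a standard Agmon--Douglis--Nirenberg (ADN) estimate.

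First, I would verify interior (Petrovski) ellipticity. The principal symbol at $(x,\xi)$ sends $w \in \RR^3$ to $((b^T\xi)\cdot w,\,(b^T\xi)\times w) \in \RR^4$. Since $\det b = J^2$ is uniformly bounded below, $\eta := b^T\xi$ is nonzero whenever $\xi \ne 0$, and the identity $|\eta|^2 w = (\eta \cdot w)\eta - \eta\times(\eta\times w)$ shows the symbol has trivial kernel; hence the interior system is overdetermined elliptic.

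Next I would verify the Lopatinski--Shapiro complementing condition for the boundary operator $v \mapsto (b^T n)\cdot v$. At a boundary point, after freezing coefficients and reducing to the normal-mode problem, the pointwise invertible linear substitution $u = b^T v$ identifies the system with the standard div--curl system and the ordinary normal-trace boundary condition, whose complementing property is classical. This is precisely where the uniform ellipticity of $b^T a$ (equivalently $b^T b$) is used.

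With both conditions in place, the classical ADN estimate yields
\begin{equation*}
\|v\|_{H^r}
\les
\|D\|_{H^{r-1}} + \|\zeta\|_{H^{r-1}} + \|g\|_{H^{r-1/2}(\partial\Omega)} + \|v\|_{L^2},
\end{equation*}
with constant depending polynomially on $\|b\|_{C^2(\bar\Omega)}$, hence on $\|b\|_{H^{r+1}}$ via $H^{r+1}\hookrightarrow C^2(\bar\Omega)$ for $r > 3/2$. Lower-order contributions of the form $\partial b \cdot \nabla v$ from rearrangements are handled by Sobolev multiplication, and any intermediate $\|v\|_{H^{r-1}}$ term is absorbed through the interpolation $\|v\|_{H^{r-1}} \le \epsilon \|v\|_{H^r} + C_\epsilon \|v\|_{L^2}$ with $\epsilon$ small. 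The main obstacle is the careful verification of the complementing condition and tracking the polynomial dependence of the ADN constant on $\|b\|_{C^2}$; a hands-on alternative is to use a partition of unity, freeze $b$ at each support point $x_0$, and apply the linear change of variables $y = b(x_0)^{-1} x$, under which (via $\partial_{x_\ell} = (b^{-1})_{k\ell}\partial_{y_k}$) the variable divergence and variable curl of $v$ convert to the standard ones in $y$ and $(b^T n)\cdot v$ becomes the $y$-normal trace up to a nonzero factor, reducing the estimate locally to the classical div--curl lemma with commutator errors of order $P(\|b\|_{H^{r+1}}) \|v\|_{H^{r-1}}$ absorbed as above.
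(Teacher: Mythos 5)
Your proposal is correct in outline but takes a genuinely different route from the paper. You treat the variable div--curl--normal-trace problem directly as a first-order overdetermined elliptic (ADN) system: injectivity of the principal symbol (using $\det b = J^2 \gtrsim 1$), the Lopatinski--Shapiro condition for $v\mapsto (b^Tn)\cdot v$, then the overdetermined elliptic estimate, with a freezing/partition-of-unity argument to localize; this is the natural variable-coefficient extension of the Bourguignon--Brezis lemma. The paper instead reduces everything to scalar second-order problems: it antisymmetrizes the variable gradient into $\varphi_{kj}$ in \eqref{EQ84}, applies $b_{mk}\partial_m$ and the Piola identity to obtain the equation \eqref{EQ89} with principal part $-\partial_m(b_{mk}b_{\ell k}\partial_\ell v_j)$, controls the corresponding Neumann datum through the auxiliary function $U=v_pb_{ip}\nu_i$ solving a Dirichlet problem (estimate \eqref{EQ96}) together with the identity \eqref{EQ99}, and then uses only standard scalar Dirichlet/Neumann regularity plus interpolation between $r=2$ and $r=3$; this is precisely what makes the polynomial dependence on $\|b\|_{H^{r+1}}$ and the fractional $r$ easy to justify, whereas in your route these points require citing (and tracking constants in) ADN theory for variable-coefficient overdetermined systems in $H^r$ with non-integer $r$, which you flag but do not carry out. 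Two cautions on your verification of the complementing condition: the sentence claiming that the substitution $u=b^Tv$ turns the frozen system into the standard div--curl system is not correct --- with frozen $b$ one has $b_{ji}\partial_j v_i=\div(bv)$, not $\div(b^Tv)$, and the variable curl is not the curl of $b^Tv$ either; the correct mechanism is exactly the change of independent variables $y=b(x_0)^{-1}x$ that you give afterwards, under which both operators and the boundary trace reduce to the standard ones acting on the untransformed components of $v$, so the Lopatinski verification should be run through that reduction. Also, after freezing, the error $(b-b(x_0))\cdot\nabla v$ is a first-order term with a small coefficient rather than a genuine lower-order commutator, so it is absorbed by choosing the patch radius small (the number of patches, hence the constant, being controlled by $\|b\|_{C^1}$), not by the interpolation $\|v\|_{H^{r-1}}\le \eps\|v\|_{H^r}+C_\eps\|v\|_{L^2}$; with these repairs your argument goes through and gives an alternative, more systematic proof of \eqref{EQ83}.
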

\begin{proof}
Fix $t \in [0,T]$. 
Define $\varphi$ as the matrix with entries
  \begin{equation}
   \varphi_{kj} = b_{\ell j} \partial_\ell v_k - b_{\ell k} \partial_\ell v_j
   \comma k,j = 1,2,3
   .
   \label{EQ84}
  \end{equation}
Since $\|\varphi\|_{H^{r-1}} \leq C\|\zeta\|_{H^{r-1}}$, it suffices to prove~\eqref{EQ83} with $\zeta$ replaced by~$\varphi$.
We apply $b_{m k}\partial_m$ to~\eqref{EQ84} and after summing over $k$, we get
  \begin{equation}
   b_{m k} \partial_m \varphi_{kj} 
   = 
   b_{m k}\partial_m (b_{\ell j} \partial_\ell v_k) 
   - b_{m k} \partial_m (b_{\ell k} \partial_\ell v_j )
   .
   \label{EQ85}
  \end{equation}
We reformat~\eqref{EQ85} into a second order elliptic equation with Neumann boundary conditions, acquiring~\eqref{EQ83} through elliptic regularity. 

Indeed, applying Piola in the second term gives us 
  \begin{equation}
   b_{m k} \partial_m ( b_{\ell k} \partial_\ell v_j )
   =
   \partial_m (b_{m k} b_{\ell k} \partial_\ell v_j)
   .
   \llabel{EQ86}
  \end{equation}
For the first term, we again use Piola to shift terms and get 
  \begin{align}
   \begin{split}
    b_{m k} \partial_m \left(b_{\ell j} \partial_\ell v_k \right)
    &= 
    b_{m k}\partial_\ell\partial_m(b_{\ell j} v_k)
    \\&=
    \partial_\ell (b_{m k} \partial_m (b_{\ell j} v_k)) 
    - \partial_\ell b_{m k} \partial_m \left(b_{\ell j} v_k \right)
    \\&=
    \partial_\ell (b_{m k}b_{\ell j} \partial_m v_k)
    + \partial_\ell (b_{m k} \partial_m b_{\ell j} v_k)
    - \partial_\ell b_{m k} \partial_m (b_{\ell j} v_k)
    \\&=
    b_{\ell j} \partial_\ell (b_{m k} \partial_m v_k)
    + \partial_\ell(b_{m k} \partial_m b_{\ell j} v_k)
    - \partial_\ell b_{m k} \partial_m (b_{\ell j} v_k)   
    .
   \end{split}
   \llabel{EQ87}
  \end{align}
Hence, \eqref{EQ85} becomes
  \begin{equation}
   b_{m k} \partial_m \varphi_{kj}
   = 
   b_{\ell j} \partial_m(b_{m k} \partial_m v_k)
   + \partial_\ell (b_{m k} \partial_m b_{\ell j} v_k)
   - \partial_\ell b_{m k} \partial_m (b_{\ell j} v_k)
   - \partial_m(b_{m k} b_{\ell k} \partial_\ell v_j)
   .
   \llabel{EQ88}
  \end{equation}
Rearranging in a more suggestive way, we have 
  \begin{equation}
   -\partial_m(b_{m k} b_{\ell k} \partial_\ell v_j) 
   + \partial_\ell(b_{m k} \partial_m b_{\ell j} v_k) 
   - \partial_\ell b_{m k}\partial_m(b_{\ell k} v_k)
   =
   b_{m k}\partial_m \varphi_{kj}
   - b_{\ell j} \partial_\ell (b_{m k} \partial_m v_k)
   .
   \label{EQ89}
  \end{equation}
Note that the order of derivative falling on $b$ in the second term is still one due to Piola. 

Now, we construct a Dirichlet problem to aid in later estimates. 
Define $\nu \in C^\infty (\overline{\Omega}; \mathbb{R}^3)$ such that $\nu|_{\partial \Omega} = n$, where $n$ is the unit normal vector to~$\partial \Omega$.
Introduce the function
  \begin{equation}
   U 
   =
   v_p b_{i p} \nu_i
   \llabel{EQ90}
  \end{equation}
and the operator
  \begin{equation}
   L 
   = 
   -\partial_m (b_{m k} b_{\ell k} \partial_\ell (\cdot))
   .
   \llabel{EQ91}
  \end{equation}
By assumption, $L$ is uniformly elliptic. 
Since $U|_{\partial \Omega} = v_p b_{i p} n_i$, elliptic regularity gives us 
  \begin{equation}
   \|U\|_{H^r}
   \les_{\,b}
   \|LU\|_{H^{r-2}} 
   + \|v_p b_{i p} n_i\|_{H^{r-1/2}(\partial\Omega)}
   ,
   \llabel{EQ92}
  \end{equation}
where the inequality depends polynomially on~$\|b\|_{H^{r+1}}$.
To see this, note that $b \in H^{r+1} \hookrightarrow C^2(\overline{\Omega})$ with $r+1 > 3.5$.
The cases $r=2$ and $3$ are classical for~\eqref{EQ11} since $b$ and $\nabla b$ are at least $C^1$, and the embedding gives us the desired polynomial bound in~$\|b\|_{H^{r+1}}$. 
Then we interpolate between $r = 2$ and~$3$. 

To estimate $\|LU\|_{H^{r-2}}$, we expand 
  \begin{align}
   \begin{split}
    LU
    &=
    -\partial_m (b_{m k} b_{\ell k} \partial_\ell (v_p b_{i p}\nu_i))
    \\&= 
    -\partial_m(b_{m k}b_{\ell k} \partial_\ell v_p b_{i p} \nu_i)
    -\partial_m ( b_{m k} b_{\ell k} v_p \partial_\ell (b_{i p} \nu_i))
    \\&=
    (Lv_p)b_{i p}\nu_i
    - b_{m k} b_{\ell k} \partial_\ell v_p \partial_m (b_{i p} \nu_i)
    - \partial_m (b_{m k} b_{\ell k} v_p \partial_\ell(b_{i p} \nu_i))
    ,
   \end{split}
   \llabel{EQ93}
  \end{align}
giving us the estimate 
  \begin{align}
   \begin{split}
    \|U\|_{H^r}
    &\les
    \|(Lv_p)b_{i p} \nu_i\|_{H^{r-2}}
    + \|b_{m k} b_{\ell k} \partial_\ell v_p \partial_m (b_{i p} \nu_i)\|_{H^{r-2}}
    \\&\indeq 
    + \|\partial_m (b_{m k} b_{\ell k} v_p \partial_\ell (b_{i p} \nu_i) )\|_{H^{r-2}}
    + \|v_p b_{i p} n_i\|_{H^{r-1/2}(\partial \Omega)}
    \\&\les
    \|b\|_{H^{r+1}} \|Lv_p\|_{H^{r-2}}
    + \|b\|_{H^{r+1}}^3\|v\|_{H^{r-1}}
    + \|v_p b_{i p}n_i \|_{H^{r-1/2}(\partial \Omega)}
    \\&\les_{\, b}
    \|L v_p\|_{H^{r-2}}
    + \|v\|_{H^{r-1}}
    + \|v_p b_{i p} n_i\|_{H^{r-1/2}(\partial \Omega)}
    ,
   \end{split}
   \llabel{EQ94}
  \end{align}
where we used the multiplicative Sobolev inequalities. 
Next, we estimate $\|Lv_j\|_{H^{r-2}}$. From~\eqref{EQ89}, we get 
  \begin{align}
   \begin{split}
    \|Lv_j\|_{H^{r-2}}
    &\les
    \|b_{m k} \partial_m b_{\ell j} v_k\|_{H^{r-1}}
    + \|\partial_\ell b_{m k} \partial_m (b_{\ell j} v_k)\|_{H^{r-2}} 
    \\&\indeq
    + \|b_{m k} \partial_m \varphi_{kj}\|_{H^{r-2}}
    + \|b_{\ell j} \partial_\ell (b_{m k} \partial_m v_k)\|_{H^{r-2}}
    \\& \les_{\,b}
    \|v\|_{H^{r-1}}
    + \|\varphi\|_{H^{r-1}}
    + \|b_{m k} \partial_m v_k\|_{H^{r-1}}
    .
   \end{split}
   \llabel{EQ95}
  \end{align}
Hence,
  \begin{equation}
   \|U\|_{H^r}
   \les_{\, b}
   \|b_{m k} \partial_m v_k\|_{H^{r-1}}
   + \|\varphi\|_{H^{r-1}}
   + \|v_p b_{i p} n_i\|_{H^{r-1/2}(\partial \Omega)}
   + \|v\|_{H^{r-1}}
   .
   \label{EQ96}
  \end{equation}
Returning to the elliptic problem on $v_j$, we seek boundary estimates for the Neumann term $b_{m k} b_{\ell k} \partial_\ell v_j n_m$.
With this in mind, we define
  \begin{equation}
   W_{jk} 
   = 
   b_{m k} b_{\ell k} \partial_\ell v_j \nu_m
   \inon{in $\Omega$}   
   \comma j,k = 1,2,3
   .
   \llabel{EQ97}
  \end{equation}
For $j,k = 1,2,3$, we have
  \begin{equation}
   W_{jk} 
   = 
   b_{m k} \nu_m b_{\ell k} \partial_\ell v_j
   =
   b_{m k} \nu_m (b_{\ell j} \partial_\ell v_k - \varphi_{kj})
   =
   b_{m k} \nu_m b_{\ell j} \partial_\ell v_k
   - b_{m k} \nu_m \varphi_{kj}
   .
   \llabel{EQ98}
  \end{equation}
Letting $W_j = \sum_k W_{jk}$, we apply the product rule and find
  \begin{align}
   \begin{split}
    W_j 
    &=
    b_{\ell j} \partial_\ell (v_k b_{m k} \nu_m)
    - \partial_\ell (b_{m k} \nu_m) b_{\ell j} v_k 
    - b_{m k} \nu_m \varphi_{kj}
    \\&=
    b_{\ell j} \partial_\ell U
    - \partial_\ell (b_{m k} \nu_m) b_{\ell j} v_k
    - b_{m k} \nu_m \varphi_{kj}
    ,
   \end{split}
   \label{EQ99}
  \end{align}
so that $W_j \in H^{r-1}$. 
Restricting to the boundary, we obtain
  \begin{align}
   \begin{split}
    &\|b_{m k} b_{\ell k} \partial_\ell v_j n_m\|_{H^{r-3/2}(\partial \Omega)}
    \\&\indeq
    \les
    \|b_{\ell j} \partial_\ell U\|_{H^{r-3/2}(\partial \Omega)}
    + \|\partial_\ell (b_{m k} \nu_m) b_{\ell j} v_k\|_{H^{r-3/2}(\partial \Omega)}
    + \|b_{m k} \nu_m \varphi_{kj}\|_{H^{r-3/2}(\partial \Omega)}
    \\&\indeq\les 
    \|b_{\ell j} \partial_\ell U\|_{H^{r-1}}
    + \|\partial_\ell (b_{m k} \nu_m)b_{\ell j} v_k \|_{H^{r-1}}
    + \|b_{m k} \nu_m \varphi_{kj}\|_{H^{r-1}}
    \\&\indeq\les_{\, b}
    \|U\|_{H^r}
    + \|v\|_{H^{r-1}}
    + \|\varphi\|_{H^{r-1}}
    .
   \end{split}
   \llabel{EQ100}
  \end{align}
Applying elliptic regularity with~\eqref{EQ89} and the above estimate, we have
  \begin{align}
   \begin{split}
    \|\nabla v_j\|_{H^{r-1}}
    &\les_{\,b}
    \|b_{\ell j} \partial_\ell(b_{m k} \partial_m v_k)\|_{H^{r-2}}
    + \|b_{m k} \partial_m \varphi_{kj}\|_{H^{r-2}}
    + \|b_{m k} b_{\ell k} \partial_\ell v_j n_m\|_{H^{r-3/2}(\partial \Omega)} 
    \\&\les_{\,b} 
    \|b_{m k} \partial_m v_k \|_{H^{r-1}}
    + \|\varphi\|_{H^{r-1}}
    + \|U\|_{H^r}
    + \|v\|_{H^{r-1}}
    .
   \end{split}
   \llabel{EQ101}
  \end{align}
But with estimate~\eqref{EQ96}, this gives us 
  \begin{equation}
   \|\nabla v_j\|_{H^{r-1}}
   \les_{\, b}
   \|b_{m k} \partial_m v_k\|_{H^{r-1}}
   + \|\varphi\|_{H^{r-1}}
   + \|v_p b_{i p} n_i\|_{H^{r-1/2}(\partial \Omega)}
   + \|v\|_{H^{r-1}}
   .
   \llabel{EQ102}
  \end{equation}
Adding $\|v\|_{L^2}$ to both sides, we obtain
  \begin{equation}
   \|v\|_{H^{r}} 
   \les
   \|b_{m k} \partial_m v_k\|_{H^{r-1}}
   + \|\varphi\|_{H^{r-1}}
   + \|v_p b_{i p} n_i\|_{H^{r-1/2}(\partial \Omega)}
   + \|v\|_{H^{r-1}}
   .
   \label{EQ102a}
  \end{equation}
  Finally, to replace $\|\cdot\|_{H^{r-1}}$ with $\|\cdot\|_{L^2}$ on the right-hand-side of~\eqref{EQ102a}, we use the Sobolev interpolation and Young inequalities.
\end{proof}

We also have the following bound on $v_t$ which is verified directly from~\eqref{EQ06}.
\begin{lemma}
\label{L07}
Under the assumptions of Theorem~\ref{T01}, we have 
  \begin{equation}
   \|v_t\|_{H^{r-1}}
   \les
   P(
     \|v\|_{H^r},
     \|\psi\|_{H^r},
     \|\psi_t\|_{H^{r-1}},
     \|a\|_{H^{r+1}},
     \|a_t\|_{H^{r-1}}
   )
   .
   \label{EQ103}
  \end{equation}
\end{lemma}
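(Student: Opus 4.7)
The plan is to read off $\partial_t v$ directly from the velocity equation~\eqref{EQ06}$_1$, namely
  \begin{equation*}
   \partial_t v_i = -(v_m - \psi_m) a_{km} \partial_k v_i - a_{ki} \partial_k q,
  \end{equation*}
take the $H^{r-1}$ norm of both sides, and control each of the two resulting terms by the multiplicative Sobolev inequalities recorded just before the proof of Lemma~\ref{L02}. Since $r \in (2.5,3]$, the exponents $r-1 > 1.5$ and $r+1$ are comfortably in the range where $H^{r-1}$ is a Banach algebra and where triple and double products admit the usual tame estimates.

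For the transport term, I would bound
  \begin{equation*}
   \|(v_m - \psi_m) a_{km} \partial_k v_i\|_{H^{r-1}}
   \lesssim
   (\|v\|_{H^{r-1}} + \|\psi\|_{H^{r-1}})\, \|a\|_{H^{r-1}}\, \|\nabla v\|_{H^{r-1}}
   \leq
   P(\|v\|_{H^r}, \|\psi\|_{H^r}, \|a\|_{H^{r+1}}),
  \end{equation*}
where $a$ is bounded in terms of itself in the weaker norm $H^{r-1}\hookrightarrow H^{r+1}$ (alternatively, one may treat $a$ at its full regularity $H^{r+1}$ with no harm). For the pressure term,
  \begin{equation*}
   \|a_{ki} \partial_k q\|_{H^{r-1}}
   \lesssim
   \|a\|_{H^{r-1}}\, \|\nabla q\|_{H^{r-1}}
   \lesssim
   \|a\|_{H^{r+1}}\, \|q\|_{H^r}.
  \end{equation*}

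At this point the key input is the elliptic bound~\eqref{EQ14} from Lemma~\ref{L02}, which delivers
  \begin{equation*}
   \|q\|_{H^r}
   \leq
   P(\|v\|_{H^r}, \|\psi\|_{H^r}, \|\psi_t\|_{H^{r-1}}, \|a\|_{H^{r+1}}, \|a_t\|_{H^{r-1}}).
  \end{equation*}
Combining the two estimates above and absorbing all polynomial factors into a single polynomial $P$ yields~\eqref{EQ103}. There is no real obstacle here: the only subtlety is that the dependence on $\psi_t$ and $a_t$ enters not from the transport term but through the pressure, via the Neumann boundary datum $\tilde g$ in~\eqref{EQ20} which contains $\partial_t(n_j b_{ji}\psi_i)$ and $\partial_t b_{ji} v_i$. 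Once Lemma~\ref{L02} is in hand, the present bound is an immediate algebraic consequence of the equation itself.
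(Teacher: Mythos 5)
Your proposal is correct and follows exactly the route the paper intends: Lemma~\ref{L07} is stated as being ``verified directly from~\eqref{EQ06}'', i.e.\ solve for $\partial_t v$, apply the multiplicative Sobolev (algebra) estimates in $H^{r-1}$ for $r>2.5$, and invoke the pressure bound~\eqref{EQ14} of Lemma~\ref{L02}, which is where the $\psi_t$ and $a_t$ dependence enters. The only blemish is the reversed embedding arrow (it should read $H^{r+1}\hookrightarrow H^{r-1}$, giving $\|a\|_{H^{r-1}}\lesssim\|a\|_{H^{r+1}}$), which does not affect the argument.
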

We are now ready to prove Theorem~\ref{T01}. 
\begin{proof}[Proof of Theorem~\ref{T01}]
Using Lemma~\ref{L06}, we have 
  \begin{equation}
   \|v\|_{H^r}^2
   \les
   \|\zeta\|_{H^{r-1}}^2
   + \|\psi_k b_{j k} n_j\|_{H^{r-1/2}(\partial\Omega)}^2
   + \|v\|_{L^2}^2
   ,
   \label{EQ104}
  \end{equation}
where the divergence term vanishes by assumption and we used the boundary condition~\eqref{EQ08}.
From the vorticity bound given by Lemma~\ref{L05}, we have 
  \begin{align}
   \begin{split}
    \|\zeta\|_{H^{r-1}}^2 
    &\les 
    \|\zeta(0)\|_{H^{r-1}}^2
    + \int_0^t P(
             \|v\|_{H^r},
             \|\psi\|_{H^r},
             \|\psi_t\|_{H^{r-1}},
             \|a\|_{H^{r+1}},
             \|a_t\|_{H^{r-1}}
    )
    (Y + Y^{1/2})\, ds
    \\&\les
    \|b\|_{H^{r+1}}^2 \|v_0\|_{H^r}^2
    + \int_0^t P(
             \|v\|_{H^r},
             \|\psi\|_{H^r},
             \|\psi_t\|_{H^{r-1}},
             \|a\|_{H^{r+1}},
             \|a_t\|_{H^{r-1}}
    )
    \left(\frac{3}{2}Y + \frac{1}{2}\right) \, ds
    ,
   \end{split}
   \label{EQ105}
  \end{align}
where we used Young's inequality in the last line to linearize the integrand. 
Next, note that 
  \begin{align}
   \begin{split}
    \|v\|_{L^2}^2
    &\leq
    \|v_0\|_{L^2}^2
    + \int_0^t \|v_t\|_{L^2}^2\, ds
    \\&\les
    \|v_0\|_{L^2}^2
    + \int_0^t P(
             \|v\|_{H^r},
             \|\psi\|_{H^r},
             \|\psi_t\|_{H^{r-1}},
             \|a\|_{H^{r+1}},
             \|a_t\|_{H^{r-1}}
    )
    \, ds
    ,
   \end{split}
   \llabel{EQ106}
  \end{align}
employing the estimate~\eqref{EQ103}. 
Putting~\eqref{EQ105} and~\eqref{EQ103} into~\eqref{EQ104} and noting that 
  \begin{equation}
    \|\psi_k b_{j k} n_j\|_{H^{r-1/2}(\partial \Omega)}
    \les 
    \|b\|_{H^{r+1}}\|\psi\|_{H^r}
    ,
    \llabel{EQ107}
  \end{equation}
we get 
  \begin{align}
    \begin{split}
   \|v\|_{H^r}^2 
   &\les 
   \|v_0\|_{H^r}^2(1 + \|b\|_{H^{r+1}}^2)
   + \|b\|_{H^{r+1}}^2 \|\psi\|_{H^r}^2
    \\&\indeq
   + \int_0^t P(
             \|v\|_{H^r},
             \|\psi\|_{H^r},
             \|\psi_t\|_{H^{r-1}},
             \|a\|_{H^{r+1}},
             \|a_t\|_{H^{r-1}}
    )
    (Y+1)
    \, ds
    .
  \end{split}
   \label{EQ108}
  \end{align}
Applying Gronwall's lemma or a barrier argument to~\eqref{EQ105} and~\eqref{EQ108} completes the proof.
\end{proof}

\startnewsection{Existence}{sec03}
In this section we use the method in~\cite{KuT} to construct a solution to the system~\eqref{EQ06}--\eqref{EQ08}. 
Recall from Section~\ref{sec01} that the system is given by 
  \begin{align}
   &\partial_t v_i
   + (v_m - \psi_m)a_{k m} \partial_k v_i
   + a_{k i} \partial_k q
   = 0
   \inon{in $\Omega$}
   \comma i = 1,2,3\,,
   \label{EQ199}
   \\&
   a_{k i} \partial_k v_i
   = 0
   \label{EQ200}
  \end{align}
on a smooth bounded domain $\Omega \subset \mathbb{R}^3$, with the initial condition
  \begin{equation}
   v|_{t = 0}
   =
   v_0
   \label{EQ201}
  \end{equation}
and the boundary condition 
  \begin{equation}
   (v_k - \psi_k) a_{j k} n_j
   = 
   0
   \inon{on $\partial \Omega$}
   .
   \label{EQ202}
  \end{equation}
Again, we assume
  \begin{equation}
   (\psi, \psi_t) \in L^\infty ([0,T]; H^r \times H^{r-1})
   \label{EQ203}
  \end{equation}
and 
  \begin{equation}
   (a,a_t) \in L^\infty([0,T]; H^{r+1} \times H^{r-1})
   .
   \label{EQ204}
  \end{equation}
We define $b = \cof(a^{-1})^T$ and $J = \det(a^{-1})$, giving us $a = J^{-1}b$.
Assume that $b$ satisfies the Piola condition and $J$ is uniformly bounded above and below by positive constants. 
Finally, we assume 
  \begin{equation}
   \int_{\partial \Omega} \partial_t (n_j b_{ji} \psi_i) 
   =
   0
   \comma 
   t \in [0,T]
   ,
   \label{EQ205}
  \end{equation}
which is essential in the following construction. 

\subsection{Construction of a local-in-time solution}

\begin{theorem}
\label{T07}
Assume that $v_0 \in H^r$, where $r \in (2.5,3]$, satisfies the divergence and boundary conditions~\eqref{EQ199} and~\eqref{EQ202} at time $t = 0$. 
Then there exists a local-in-time solution $(v,q)$ to the system~\eqref{EQ199}--\eqref{EQ202} such that 
  \begin{align}
   \begin{split}
    &v \in L^\infty ([0,T]; H^r(\Omega))
    \\&
    v_t \in L^\infty([0,T]; H^{r-1}(\Omega))
    \\&
    q \in L^\infty ([0,T]; H^r(\Omega))
    ,
   \end{split}
   \llabel{EQ206}
  \end{align}
for some time $T > 0$ depending on $v_0$ and assumed data $\psi$ and~$a$. 
Moreover, the solution  $(v,q)$ satisfies the estimate 
  \begin{equation}
   \|v(t)\|_{H^r}
   + \|\nabla q \|_{H^{r-1}}
   \les
   P(
     \|v_0\|_{H^r},
     \|a\|_{H^{r+1}},
     \|\psi\|_{H^r}
     )
   + \int_0^t P(
     \|\psi\|_{H^r},
     \|\psi_t\|_{H^{r-1}},
     \|a\|_{H^{r+1}},
     \|a_t\|_{H^{r-1}}
     )
   \,ds
   ,
   \llabel{EQ207}
  \end{equation}
for $t \in [0,T]$. 
\end{theorem}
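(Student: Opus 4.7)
The plan is to follow the mollification and iteration strategy of~\cite{KuT}. First, I would regularize the data by replacing $(a,\psi,v_0)$ with smooth approximations $(a^\epsilon,\psi^\epsilon,v_0^\epsilon)$ chosen so that the hypotheses~\eqref{EQ203}--\eqref{EQ204} hold uniformly in $\epsilon$, the compatibility condition~\eqref{EQ205} is preserved at the $\epsilon$ level, and $v_0^\epsilon$ still satisfies the divergence and boundary conditions at $t=0$. For each fixed $\epsilon$ I would construct a smooth solution on a time interval $[0,T]$ independent of $\epsilon$, deduce uniform $H^r$ bounds from Theorem~\ref{T01}, and then extract a limit.

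For fixed $\epsilon$, I would run a Picard-type iteration. Given an iterate $v^{(n)}$, satisfying the divergence and boundary constraints, I would first determine the pressure $q^{(n+1)}$ by solving the elliptic Neumann problem
\begin{equation*}
   -\partial_j(b_{ji}^\epsilon a_{ki}^\epsilon \partial_k q^{(n+1)})=\partial_j \tilde f_j^{(n)}\inin{\Omega},\qquad
   -n_j b_{ji}^\epsilon a_{ki}^\epsilon \partial_k q^{(n+1)} = \tilde g^{(n)}\onon{\partial\Omega},
\end{equation*}
where $\tilde f^{(n)}$, $\tilde g^{(n)}$ are built from $v^{(n)}$ exactly as in~\eqref{EQ17} and~\eqref{EQ20}. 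The solvability condition $\int_\Omega \partial_j \tilde f_j^{(n)} = \int_{\partial\Omega}\tilde g^{(n)}$ reduces, after integration by parts, to the compatibility hypothesis~\eqref{EQ205} together with the divergence-free property propagated for $v^{(n)}$; this is precisely where~\eqref{EQ205} enters crucially. Next, I would extend the coefficients to $\mathbb{R}^3$ in the spirit of Section~\ref{sec02} and define $v^{(n+1)}$ by solving, by the method of characteristics, the linear transport equation
\begin{equation*}
   \partial_t v^{(n+1)}_i + (\widetilde{v^{(n)}_m}-\widetilde{\psi_m^\epsilon})\widetilde{a^\epsilon_{km}}\,\partial_k v^{(n+1)}_i = -\widetilde{a_{ki}^\epsilon \partial_k q^{(n+1)}}\inin{\mathbb{R}^3},\qquad v^{(n+1)}|_{t=0}=\tilde v_0^\epsilon,
\end{equation*}
and then restricting to $\Omega$.

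The central task is to check that the constraints are propagated by the iteration. Applying the variable divergence $a_{ji}^\epsilon \partial_j$ to the velocity equation and using the Piola identity together with the elliptic equation for $q^{(n+1)}$ yields a linear transport equation for $D^{(n+1)}:=a_{ji}^\epsilon \partial_j v^{(n+1)}_i$ whose initial datum vanishes; a Gr\"onwall argument then forces $D^{(n+1)}\equiv 0$, modulo a possible spatial constant equal to the average net flux of $b^\epsilon \psi^\epsilon$ through $\partial\Omega$. On the boundary, splitting $\partial_k = \partial_k^\tau + n_k n_\ell \partial_\ell$ and combining the velocity equation with the Neumann equation~\eqref{EQ48} for $q^{(n+1)}$ produces a 2D transport equation on $\partial\Omega$ for the trace $(v^{(n+1)}_k-\psi_k^\epsilon)a_{jk}^\epsilon n_j$, whose vanishing initial datum forces it to vanish for all~$t$. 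A contraction estimate in a lower-regularity norm such as $H^{r-1}$, combined with the uniform $H^r$ bound supplied by Theorem~\ref{T01} applied to the iterates, then produces a smooth fixed point $(v^\epsilon, q^\epsilon)$ on $[0,T]$ with $T$ depending only on the bounds in the hypotheses.

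To conclude, I would pass to the limit $\epsilon\to 0$. The uniform bounds from Theorem~\ref{T01} give $v^\epsilon \weakto v$ weakly-$*$ in $L^\infty([0,T];H^r)$ with $v^\epsilon_t$ bounded in $L^\infty([0,T];H^{r-1})$, and the Aubin--Lions lemma upgrades this to strong convergence in $C([0,T];H^{r-1})$, which suffices to pass to the limit in the nonlinear and pressure terms. The main obstacle, as flagged in the introduction, is the bookkeeping of the divergence constraint: at positive $\epsilon$ the scheme genuinely produces an \emph{inhomogeneous} variable-divergence proportional to the average net flux of $b^\epsilon\psi^\epsilon$ through $\partial\Omega$, and it is only because of the compatibility hypothesis~\eqref{EQ205} that this inhomogeneity vanishes in the limit, yielding~\eqref{EQ200}. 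The estimate in the theorem then follows directly by taking weak-$*$ limits in the uniform bound given by Theorem~\ref{T01}.
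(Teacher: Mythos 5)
Your overall architecture (regularize the coefficients, linearize into a transport equation driven by a pressure solving an elliptic Neumann problem, iterate, recover the constraints, and remove the regularization using the compatibility condition~\eqref{EQ205}) is the same as the paper's, but two specific steps fail as written. First, the solvability of the Neumann problem at each iteration: you define $q^{(n+1)}$ from the right-hand sides of~\eqref{EQ17} and~\eqref{EQ20} built from $v^{(n)}$ and claim the compatibility $\int_\Omega \partial_j\tilde f_j^{(n)}=\int_{\partial\Omega}\tilde g^{(n)}$ follows from~\eqref{EQ205} together with the divergence and boundary constraints ``propagated'' for the iterates. But the constraints are not propagated along the iteration: when you apply the variable divergence (or take the normal trace) of the equation for $v^{(n+1)}$, the pressure $q^{(n+1)}$ was manufactured from $v^{(n)}$, so the cancellation that would yield a homogeneous transport equation for $a_{ji}\partial_j v_i^{(n+1)}$ or for $n_jb_{ji}(v_i^{(n+1)}-\psi_i)$ occurs only at a fixed point, not for the iterates. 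Moreover the mollified coefficients need not satisfy~\eqref{EQ205} exactly, and your own final paragraph (the scheme produces an inhomogeneous divergence at positive $\epsilon$) contradicts your assumption that it does. The paper circumvents both difficulties by adding the constant correction $\mathcal{E}(t)$ to the interior equation~\eqref{EQ211} and proving in Lemma~\ref{L08} that compatibility then holds for an \emph{arbitrary} $\tilde v$; the price is an inhomogeneous, spatially constant variable divergence at the approximate level (cf.~\eqref{EQ243}), which is removed only in the final limit through~\eqref{EQ205} for the original data, as in~\eqref{EQ297b}.

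Second, there is a loss of derivative in your pressure step: with the raw data of~\eqref{EQ17}--\eqref{EQ20}, the elliptic estimate (Lemma~\ref{L02}, \eqref{EQ14}) only yields $q^{(n+1)}\in H^{r}$, since the source involves $\nabla v^{(n)}\in H^{r-1}$ inside a divergence; then the forcing $a\nabla q^{(n+1)}$ in your transport equation lies only in $H^{r-1}$ and the iteration cannot be closed in $H^r$. The paper's Step~1 addresses exactly this: the Neumann problem is rewritten in the lower-order form~\eqref{EQ46}/\eqref{EQ48} (equivalently \eqref{EQ211}--\eqref{EQ212}), and the coefficient regularity is temporarily raised to~\eqref{EQ208}--\eqref{EQ209}, giving $\|\nabla\tilde q\|_{H^{r}}$ as in~\eqref{EQ217}, which is what makes the $H^r$ bounds~\eqref{EQ215} and~\eqref{EQ219} for the iterates possible. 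Two further points: Theorem~\ref{T01} is stated for smooth solutions of the nonlinear system, so it cannot be invoked verbatim for the linear iterates (the paper derives~\eqref{EQ219} directly), nor for the constructed $H^r$ solution without a difference-quotient justification of the vorticity estimates (the paper's Step~4 leading to~\eqref{EQ249}); and the contraction is simpler in $L^\infty H^1$, as in the paper, than in $H^{r-1}$.
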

%The proof consists of first extending the problem to $\mathbb{R}^3$ and solving a linearized version of the system under stricter regularity assumptions on $\psi$ and~$a$. 
%Then we derive a solution to~\eqref{EQ199} with a contraction argument under these stricter assumptions. 
%Using the compatibility condition~\eqref{EQ205}, we then recover the divergence and boundary conditions~\eqref{EQ200} and~\eqref{EQ202}. 
%Finally, a solution to our nonlinear system~\eqref{EQ199}--\eqref{EQ202} under the original regularity assumptions on $\psi$ and $a$ is attained through a limiting argument. 

\begin{proof}
\emph{Step 1: The linearized problem.}
Following the method of~\cite{KuT}, we start by assuming higher regularity on the coefficients $\psi$ and~$a$. 
Assume that 
  \begin{equation}
   (\psi, \psi_t) \in L^\infty([0,T]; H^{r+1} \times H^r)
   \label{EQ208}
  \end{equation}
and 
  \begin{equation}
   (a,a_t) \in L^\infty([0,T]; H^{r+1} \times H^r)
   .
   \label{EQ209}
  \end{equation}
Next, we linearize~\eqref{EQ199} with the extension operator $E \colon H^k(\Omega) \to H^k(\mathbb{R}^3 )$. 
That is, we consider the transport equation
  \begin{equation}
   \partial_t v_i
   + E(\tilde{v}_m - \psi_m) E(a_{k m}) \partial_k v_i
   + E(a_{k i})E(\partial_k \tilde{q})
   = 0
   \inon{in $\mathbb{R}^3$}
   \comma i=1,2,3,
   \label{EQ210}
  \end{equation}
where $\tilde{v} \in L^\infty([0,T]; H^r)$ is given and $\tilde{q}$ is a solution to the elliptic Neumann problem,
  \begin{align}
   \begin{split}
    -\partial_j(b_{j i} a_{k i} \partial_k \tilde{q})
    &= 
    -\partial_j(\partial_t b_{j i} \tilde{v}_i)
    + b_{j i} \partial_j ((\tilde{v}_m - \psi_m)a_{k m}) \partial_k \tilde{v}_i
    \\& \indeq\indeq\indeq\indeq
    - (\tilde{v}_m - \psi_m)a_{k m} \partial_k b_{ji} \partial_j \tilde{v}_i
    + \mathcal{E}
    = \tilde{f}
    \inon{in $\Omega$ }
    ,
   \end{split}
   \label{EQ211}
  \end{align}
with boundary condition
  \begin{align}
   \begin{split}
    - n_j b_{j i} a_{k i} \partial_k \tilde{q}
    &=
    \partial_t (n_j b_{j i} \psi_i)
    - n_j \partial_t b_{j i} \tilde{v}_i
    \\& \indeq
    + (\tilde{v}_m - \psi_m) a_{k m} \partial_k^\tau (n_j b_{j i} \psi_i)
    - (\tilde{v}_m - \psi_m) a_{k m} \partial_k^\tau (n_j b_{j i}) \tilde{v}_i
    = \tilde{g}
    \inon{on $\partial\Omega$}
    .
   \end{split}
   \label{EQ212}
  \end{align}
The term $\mathcal{E}$ is defined by 
  \begin{align}
   \begin{split}
    \mathcal{E}(t) 
    &= 
    \frac{1}{|\Omega|} \int_{\partial \Omega} \partial_t (n_j b_{ji} \psi_i)
    - \frac{1}{|\Omega|} \int_{\partial\Omega} (\tilde{v}_m - \psi_m) a_{k m} \partial_k^\tau (n_j b_{j i}(\tilde{v}_i - \psi_i))
    \\&\indeq
    - \frac{1}{|\Omega|}\int_{\partial \Omega} (\tilde{v}_m - \psi_m) a_{k m}n_k n_jb_{j i} n_\ell \partial_\ell v_i
    + \frac{1}{|\Omega|} \int_\Omega (\tilde{v}_m - \psi_m)a_{k m} \partial_k(b_{ji} \partial_j \tilde{v}_i)
    .
   \llabel{EQ213}
   \end{split}
  \end{align}
Note that the first term of $\mathcal{E}$ does not vanish due to~\eqref{EQ208}--\eqref{EQ209}. 
The compatibility term is necessary and sufficient for the existence of a solution to the system~\eqref{EQ211}--\eqref{EQ212} as it ensures the compatibility 
  \begin{equation}
   \int_\Omega \tilde{f}
   =
   \int_{\partial \Omega} \tilde{g}
   .
   \llabel{EQ214}
  \end{equation}
A proof of compatibility is given in Lemma~\ref{L08}. 

The transport equation~\eqref{EQ210} with $v|_{t=0} = v_0$ admits a solution $v \in L_T^\infty H^r = L^\infty([0,T];H^r)$ for small time $T$ with the energy estimate
  \begin{equation}
   \|v\|_{L_T^\infty H^r} 
   \les
   \|v_0\|_{H^r}
   + \int_0^T P(
     \|\psi\|_{H^{r+1}},
     \|\psi_t\|_{H^r},
     \|a\|_{H^{r+1}},
     \|a_t\|_{H^{r}},
     \|\tilde{v}\|_{H^r}
     )
   \,ds
   .
   \label{EQ215}
  \end{equation}
Indeed, equation~\eqref{EQ210} is of the form
  \begin{equation}
   \partial_t v_i 
   + L_k \partial_k v_i
   =
   F_i
   \inon{in $\mathbb{R}^3$}
   \comma i=1,2,3.
   \llabel{EQ216}
  \end{equation}
Clearly, $L$ is bounded since we have assumed that the data belong to $L_T^\infty H^r$. 
To estimate $F$, elliptic regularity with the higher regularity assumption~\eqref{EQ208}--\eqref{EQ209} gives us 
  \begin{align}
   \begin{split}
    \|\nabla \tilde{q}\|_{H^r} 
    &\les
    \|\tilde{f}\|_{H^{r-1}(\Omega)}
    + \|\tilde{g}\|_{H^{r-1/2}(\partial \Omega)}
    \\&\leq
    P(
      \|\psi\|_{H^{r+1}},
      \|\psi_t\|_{H^r},
      \|a\|_{H^{r+1}},
      \|a_t\|_{H^r},
      \|\tilde{v}\|_{H^r}
    )
    .
   \end{split}
   \label{EQ217}
  \end{align}
After this, the derivation of the estimate~\eqref{EQ215} in $\mathbb{R}^3$ is standard. 

\emph{Step 2: Solution to the nonlinear problem under higher regularity.}
Through a contraction argument, we find a solution $(v,q)$ to~\eqref{EQ199} and~\eqref{EQ211}--\eqref{EQ212}.
Specifically, we use the iteration procedure
  \begin{equation}
   \partial_t v_i^{(n+1)}
   + E(v_m^{(n)} - \psi_m) E(a_{k m}) \partial_k v_i^{(n+1)}
   + E(a_{k i}) E(\partial_k q^{(n+1)})
   = 0
   ,
   \label{EQ218}
  \end{equation}
where  $q^{(n+1)}$ is a solution to the elliptic problem~\eqref{EQ211}--\eqref{EQ212} with $\tilde{v}$ replaced by~$v^{(n)}$.
Note that we get $q^{(1)}$ from $\tilde{v} = v_0$, and the rest is attained through the iteration. 
Also, the sequence $v^{(n)}$ is uniformly bounded for small time~$T$. To see this, a more precise estimate than~\eqref{EQ215} gives us 
  \begin{equation}
   \|v^{(n+1)}\|_{L_T^\infty H^r} 
   \les 
   \|v_0\|_{H^r} 
   + (\|v^{(n)}\|_{L_T^\infty H^r} +1 )
   \int_0^T P(
    \|\psi\|_{H^{r+1}},
    \|\psi_t\|_{H^r},
    \|a\|_{H^{r+1}},
    \|a_t\|_{H^r}
   )\, ds
   .
   \label{EQ219}
  \end{equation}
Hence, we may find $T$ small enough such that iterating~\eqref{EQ219} stays bounded as $n \to \infty$, that is, $\|v^{(n)}\|_{L_T^\infty H^r}\leq M$ for all~$n$.
From here on, we drop the subscript~$T$.
Once we show $v^{(n)} \mapsto v^{(n+1)}$ is a contraction in $L^\infty H^1$, we get $v^{(n)} \to v$ strongly in $L^\infty H^1$ by completeness. 
By compactness and uniqueness of weak-$\ast$ limits, however, we get $v \in L^\infty H^r$. 
A similar conclusion holds for $q^{(n)}$ through an elliptic estimate similar to~\eqref{EQ217}.

Now, we provide details for the contraction $v^{(n)} \mapsto v^{(n+1)}$ which is done in~$L^\infty H^1$.  
Define $V^{(n)} = v^{(n)} - v^{(n-1)}$ and $Q^{(n)} = q^{(n)} - q^{(n-1)}$. 
Taking a difference of equations, we find 
  \begin{align}
   \begin{split}
    &
    \partial_t V_i^{(n+1)} 
    + E(v_m^{(n)}) E(a_{k m}) \partial_k V_i^{(n+1)} 
    \\& \indeq\indeq\indeq\indeq 
    + E(V_m^{(n)}) E(a_{k m}) \partial_k v_i^{(n)}
    + E(a_{k i}) E(\partial_k Q^{(n+1)}) 
    = 0
    \inon{in $\mathbb{R}^3$}
    .
   \end{split} 
   \llabel{EQ220}
  \end{align}
Now, we apply $\Lambda = (1-\Delta)^{1/2}$ and test with $\Lambda V^{(n+1)}$ to get 
  \begin{align}
   \begin{split}
    \frac{1}{2}\partial_t \|\Lambda V^{(n+1)}\|_{L^2}^2
    &= 
    - \int \Lambda (E(v_m^{(n)}) E(a_{k m}) \partial_k V_i^{(n+1)}) \Lambda V_i^{(n+1)}
    \\&\indeq
    - \int \Lambda(E(V_m^{(n)})E(a_{k m})\partial_k v_i^{(n)}) \Lambda V_i^{(n+1)}
    - \int \Lambda (E(a_{k i}) E(\partial_k Q^{(n+1)}))\Lambda V_i^{(n+1)}
    \\&
    = I_1 + I_2 + I_3
    .
   \end{split}
   \llabel{EQ221}
  \end{align}
For $I_1$, commutator estimates and integration by parts give
  \begin{equation}
   |I_1| 
   \leq P(
     \|v^{(n)}\|_{H^r}, 
     \|a\|_{H^{r+1}}
     )
   \|V^{(n+1)}\|_{H^1}^2
   .
   \llabel{EQ222}
  \end{equation}
For $I_2$, applying Sobolev's inequality, we obtain
  \begin{align}
   \begin{split}
    |I_2|
    &\leq 
    \|E(V_m^{(n)}) E(a_{k m}) \partial_k v_i^{(n)}\|_{H^1} \|V^{(n+1)}\|_{H^1}
    \\&
    \leq P(
      \|v^{(n)}\|_{H^r},
      \|a\|_{H^{r+1}}
      )
    \|V^{(n+1)}\|_{H^1} \|V^{(n)}\|_{H^1} 
    , 
   \end{split}
   \llabel{EQ223}
  \end{align}
while for $I_3$, we have 
  \begin{equation}
   |I_3|
   \les
   \|a\|_{H^{r+1}} \|\nabla Q^{(n+1)}\|_{H^1} \|V^{(n+1)}\|_{H^1}
   .
   \llabel{EQ224}
  \end{equation}
Integrating and using the uniform bound on $\|v^{(n)}\|_{H^r}$, we get for $t \in [0,T]$,
  \begin{equation}
   \|V^{(n+1)}(t)\|_{H^1}^2
   \les 
   \int_0^t \|V^{(n+1)}\|_{H^1}^2 
   + \int_0^t \|V^{(n+1)}\|_{H^1}\|V^{(n)}\|_{H^1}
   + \int_0^t \|\nabla Q^{(n+1)}\|_{H^1} \|V^{(n+1)}\|_{H^1}
   ,
   \label{EQ225}
   \end{equation}
where the constant depends on $\psi$ and~$a$.
Next, we have the following elliptic estimate
  \begin{equation}
   \|\nabla Q^{(n+1)}\|_{H^1}
   \leq
   P(
     \|\psi\|_{H^{r+1}},
     \|\psi_t\|_{H^r},
     \|a\|_{H^{r+1}},
     \|a_t\|_{H^r}
   )
   \|V^{(n)}\|_{H^1}
   ,
   \label{EQ226}
  \end{equation}
which we can easily derive by taking a difference of equations and using elliptic regularity.
Hence, from~\eqref{EQ225}, we have from Young's inequality
  \begin{equation}
   \|V^{(n+1)}(t)\|_{H^1}^2 
   \les 
   \int_0^t \|V^{(n+1)}\|_{H^1}^2
   + \int_0^t \|V^{(n)}\|_{H^1}^2
   .
   \llabel{EQ227}
  \end{equation}
Using Gronwall's lemma, we get for small time $T$
  \begin{equation}
   \|V^{(n+1)}\|_{L^\infty H^1} 
   \leq 
   \frac{1}{2} \|V^{(n)}\|_{L^\infty H^1}
   .
   \llabel{EQ228}
  \end{equation}
Hence, $V^{n} \to 0$ strongly in $L^\infty H^1$ at an exponential rate so that $v^n \to v$ in $L^\infty H^1$ for some $v \in L^\infty H^1$. 

As pointed out above, this means $v \in L^\infty H^r$.
Using~\eqref{EQ226}, we know $\nabla q^{(n)}$ converges strongly in~$L^\infty H^1$.
Likewise, there exists $q \in L^\infty H^{r+1}$ defined up to a constant such that $\nabla q^{(n)} \to \nabla q$ in~$L^\infty H^1$. 
Integrating and taking the $H^1$ limit of~\eqref{EQ218} and restricting to $\Omega$ gives a solution $(v,q)$ to~\eqref{EQ199} and~\eqref{EQ211}--\eqref{EQ212}.

\emph{Step 3: Recovering boundary condition and an inhomogeneous divergence condition.}
As of now, we have a solution $(v,q)$ to the equation 
  \begin{equation}
   \partial_t v_i
   + (v_m - \psi_m)a_{k m} \partial_k v_i
   + a_{k i} \partial_k q
   = 0
   \inon{in $\Omega$}
   \comma i=1,2,3
   \label{EQ229}
  \end{equation}
and the elliptic Neumann problem 
\begin{align}
   \begin{split}
    -\partial_j(b_{j i} a_{k i} \partial_k q)
    &= 
    -\partial_j(\partial_t b_{j i} v_i)
    + b_{j i} \partial_j ((v_m - \psi_m)a_{k m}) \partial_k v_i
    \\& \indeq\indeq\indeq\indeq
    - (v_m - \psi_m) a_{k m} \partial_k b_{ji} \partial_j v_i
    + \mathcal{E}
    \inon{in $\Omega$ },
   \end{split} 
   \label{EQ230}
  \end{align}
with boundary condition
  \begin{align}
   \begin{split}
    -n_j b_{j i} a_{k i} \partial_k q
    &=
    \partial_t (n_j b_{j i} \psi_i)
    - n_j \partial_t b_{j i} v_i
    \\& \indeq
    + (v_m - \psi_m) a_{k m} \partial_k^\tau (n_j b_{j i} \psi_i)
    - (v_m - \psi_m) a_{k m} \partial_k^\tau (n_j b_{j i}) v_i
    \inon{on $\partial\Omega$}
    .
   \end{split}
   \label{EQ231}
  \end{align}
First, we recover the boundary condition $n_j b_{j i} (v_i - \psi_i) = 0$. 
Test~\eqref{EQ229} with $n_j b_{j i}$ and restrict to $\partial \Omega$ to get
  \begin{equation}
   n_j b_{j i} \partial_t v_i
   + (v_m - \psi_m)a_{k m} n_j b_{j i}\partial_k v_i
   + n_j b_{j i} a_{k i} \partial_k q
   = 0
   .
   \llabel{EQ232}
  \end{equation}
Rearranging some terms and using the decomposition
  \begin{equation}
   \partial_k 
   =
   (\delta_{k \ell} - n_k n_\ell) \partial_\ell
   + n_k n_\ell \partial_\ell
   = 
   \partial_k^\tau 
   + n_k n_\ell \partial_\ell
   ,
   \llabel{EQ232a}
  \end{equation}
we get
  \begin{align}
   \begin{split}
    \partial_t(n_j b_{j i} \partial_j v_i) 
    + (v_m - \psi_m)a_{k m} \partial_k^\tau (n_j b_{j i} v_i)
    &=
    -n_j b_{j i} a_{k i} \partial_k q
    + n_j \partial_t b_{j i} v_i
    + (v_m - \psi_m) a_{k m} \partial_k^\tau (n_j b_{j i}) v_i
    \\&\indeq
    - (v_m - \psi_m)a_{k m}n_k n_j b_{j i} n_\ell \partial_\ell v_i
    .
   \end{split}
   \label{EQ233}
  \end{align}
Substituting in~\eqref{EQ231}, this simplifies to 
  \begin{equation}
   \partial_t (n_j b_{j i} (v_i - \psi_i)) 
   + (v_m - \psi_m)a_{k m} \partial_k^\tau (n_j b_{j i} (v_i - \psi_i))
   = 
   - (v_m - \psi_m)a_{k m}n_k n_j b_{j i} n_\ell \partial_\ell v_i
   \inon{on $\partial \Omega $.}
   \label{EQ234}
  \end{equation}
Define $U = n_j b_{j i} (v_i - \psi_i)$ and note that the third term in~\eqref{EQ234} is actually $-J^{-1} U n_j b_{j i}n_\ell \partial_\ell v_i$.

Hence, we are interested in showing $U \equiv 0$ on $[0,T]$, where $U$ satisfies
  \begin{equation}
   \partial_t U
   + (v_m - \psi_m)a_{k m} \partial_k^\tau U
   = 
   - J^{-1} U n_j b_{j i}n_\ell \partial_\ell v_i
   \inon{on $\partial\Omega$.}
   \llabel{EQ237}
  \end{equation}
Testing against $U$ in $L^2(\partial \Omega)$, we have 
  \begin{equation}
   \frac{1}{2} \partial_t\|U\|_{L^2(\partial \Omega)}^2 
   =
   - \int_{\partial \Omega} (v_m - \psi_m) a_{k m} \partial_k^\tau U U
   - \int_{\partial\Omega} J^{-1} n_jb_{j i} n_\ell \partial_\ell v_i U^2
   .
   \llabel{EQ238}
  \end{equation}
Integrating by parts in the second term, we get
  \begin{equation}
   \frac{1}{2} \partial_t \|U\|_{L^2(\partial \Omega)}^2
   =
   \frac{1}{2}\int_{\partial \Omega} \partial_k^\tau ((v_m - \psi_m)a_{k m})U^2
   - \int_{\partial \Omega} J^{-1} n_jb_{j i} n_\ell \partial_\ell v_i U^2
   .
   \llabel{EQ239}
  \end{equation}
Since $H^r \hookrightarrow C^1(\bar{\Omega})$, we have 
  \begin{align}
   \begin{split}
    \partial_t \|U\|_{L^2(\partial \Omega)}^2 
    &\les
    \|\partial_k^\tau((v_m - \psi_m)a_{k m})\|_{L^\infty(\partial \Omega)} \|U\|_{L^2(\partial \Omega)}^2
    + \|J^{-1} n_jb_{j i} n_\ell \partial_\ell v_i\|_{L^\infty(\partial \Omega)} \|U\|_{L^2(\partial \Omega)}^2
    \\&
    \les
    P(\|v\|_{H^r},
    \|\psi\|_{H^r},
    \|a\|_{H^{r+1}}
    )
    \|U\|_{L^2(\partial \Omega)}^2
    .
   \end{split}
   \llabel{EQ240}
  \end{align}
Since $U(0) = 0$, applying Gronwall gives us $U = 0$ for all time $t \in [0,T]$.

Now, we derive an inhomogeneous divergence condition
  \begin{equation}
   b_{ji} \partial_j v_i 
   =
   \frac{1}{|\Omega|} \int_{\partial \Omega} n_j b_{j i} \psi_i
   .
   \llabel{EQ240b}
  \end{equation}
We apply $b_{j i} \partial_j$ to~\eqref{EQ229} and substitute~\eqref{EQ230} to get
  \begin{equation}
   \partial_t (b_{j i} \partial_j v_i)
   + (v_m - \psi_m)a_{k m} \partial_k (b_{j i} \partial_j v_i)
   = 
   \mathcal{E}
   \inon{in $\Omega$}
   .
   \label{EQ241}
  \end{equation}
Since $n_j b_{j i}(v_i - \psi_i) = 0$, the compatibility term $\mathcal{E}(t)$ becomes
  \begin{equation}
   \mathcal{E}(t)
   =
   \frac{1}{|\Omega|} \int_{\partial \Omega} \partial_t (n_j b_{j i} \psi_i) 
   + \frac{1}{|\Omega|} \int_{\Omega} (v_m - \psi_m) a_{k m} \partial_k (b_{j i} \partial_j v_i)
   .
   \llabel{EQ242}
  \end{equation}
Hence, defining $\mathcal{D} = b_{j i} \partial_j v_i$, the equation~\eqref{EQ241} becomes
  \begin{equation}
   \partial_t \mathcal{D} 
   + \mathcal{A} \cdot \nabla \mathcal{D}
   = 
   \frac{1}{|\Omega|} \int_{\partial \Omega} \partial_t (n_j b_{ji} \psi_i)
   +\frac{1}{|\Omega|} \int_\Omega (\mathcal{A} \cdot \nabla \mathcal{D})
   ,
   \label{EQ243}
  \end{equation}
where we define $\mathcal{A}_k = (v_m - \psi_m)a_{k m}$. 
Performing an $\dot{H}^1$ estimate, we get $\nabla \mathcal{D} = 0$. 
Revisiting~\eqref{EQ243}, we obtain
  \begin{equation}
   \mathcal{D}(t) 
   =
   \frac{1}{|\Omega|} \int_{\partial \Omega} n_j b_{j i} \psi_i
   ,
   \llabel{EQ243b}
  \end{equation}
where we note that $\int_{\partial \Omega} n_jb_{ji}(0) \psi_i(0) = 0$ since $n_j b_{ji} v_i = n_j b_{ji} \psi_i$ and $\mathcal{D}|_{t=0} = 0$.

\emph{Step 4: Energy estimate for $v$ under original regularity assumptions.}
In order to bound $v$ against the data $\psi$ and $a$ under the original regularity assumptions~\eqref{EQ203}--\eqref{EQ204}, we apply the a-priori bounds from Section~\ref{sec02}. However, the solution $(v,q)$ we found above is not smooth so we need to use difference quotients to justify this. 
Define 
  \begin{equation}
   D_{h,\ell}f(x) 
   =
   \frac{f(x+he_\ell) - f(x)}{h}
   ,
   \llabel{EQ244}
  \end{equation}
for $x \in \mathbb{R}^3$, $\ell = 1,2,3$ and $h \in \mathbb{R}^3 \setminus \{0\}$.
For the remainder of the justification, we write $D = D_{h,\ell}$.

The only part in our estimates in Section~\ref{sec02} which requires this modification is Lemma~\ref{L05}. Conveniently, $q \in H^{r+1}$, which is already smooth enough to justify integration by parts for the pressure estimates in Lemma~\ref{L02}.

As for $v$, we first apply the variable curl $\eps_{ijk} b_{\ell j} \partial_\ell (\cdot)_k$ to~\eqref{EQ199} to get 
  \begin{equation}
   \partial_t \zeta_i 
   + (v_m - \psi_m)a_{rm}\partial_r \zeta_i
   = 
   \zeta_p a_{m p}\partial_m v_i 
   + f_i
   \comma i = 1,2,3,
   \inon{in $\Omega$}
   ,
   \llabel{EQ245}
  \end{equation}
  where $\zeta_i = \eps_{ijk} b_{\ell j} \partial_\ell v_k$ and $f$ is defined in~\eqref{EQ60}.
By Lemma~\ref{L04}, we can extend to $\mathbb{R}^3$ and perform estimates on the problem
  \begin{equation}
   \bar{J} \partial_t \theta_i 
   + (\tilde{v}_m - \psi_m) \tilde{b}_{rm} \partial_r \theta_i
   =
   \theta_p \tilde{b}_{m p} \partial_m \tilde{v}_i
   + \bar{J} \tilde{f}_i
   \inon{in $\mathbb{R}^3$}
   \comma i=1,2,3.
   \label{EQ246}
  \end{equation}
It then suffices to reprove Lemma~\ref{L05} for 
  \begin{equation}
   Y
   =
   \int_{\mathbb{R}^3} \bar{J}|\Lambda^{r-2}D\theta|^2
   .
   \llabel{EQ247}
  \end{equation}
Applying $\Lambda^{r-2}D$ to~\eqref{EQ246}, testing with $\Lambda^{r-2}D\theta$, and integrating, we get a similar equation to~\eqref{EQ71},
  \begin{align}
   \begin{split}
    \frac{1}{2} \frac{dY}{dt} 
    &= 
    -\int \Big(
      \Lambda^{r-2}D((\tilde{v}_m - \tilde{\psi}_m)\tilde{b}_{r m} \partial_r \theta_i) 
      - (\tilde{v}_m - \tilde{\psi}_m)\tilde{b}_{r m} \Lambda^{r-2}D\partial_r \theta_i
    \Big) 
    \Lambda^{r-2}D\theta_i
    \\&\indeq
    - \frac{1}{2}\int (\tilde{v}_m - \tilde{\psi}_m)\tilde{b}_{r m} \partial_r (\Lambda^{r-2}D\theta_i)^2
    \\&\indeq
    + \int \Big(
      \Lambda^{r-2}D(\theta_p \tilde{b}_{m p} \partial_m \tilde{v}_i)
      - \theta_p \tilde{b}_{m p} \Lambda^{r-2}D\partial_m \tilde{v}_i
    \Big)
    \Lambda^{r-2}D\theta_i
    \\&\indeq
    + \int \theta_p \tilde{b}_{m p}\partial_m \Lambda^{r-2}D\tilde{v}_i \Lambda^{r-2}D\theta_i
    \\&\indeq
    - \int \Big(
      \Lambda^{r-2}D(\bar{J}\partial_t \theta_i)
      - \bar{J} \Lambda^{r-2}D(\partial_t \theta_i)
    \Big)
    \Lambda^{r-2}D\theta_i
    \\&\indeq
    + \frac{1}{2}\int \partial_t \bar{J}|\Lambda^{r-2}D\theta |^2
    + \int \Lambda^{r-2}D(\bar{J} \tilde{f}) \Lambda^{r-2}D\theta_i
    \\&=
    I_1 + \cdots + I_7
    .
   \end{split}
   \label{EQ248}
  \end{align}
Indeed, with only $v \in H^r$, we may integrate by parts in the second line of~\eqref{EQ248}.
Otherwise, using the product rule for difference quotients, we simplify the right-hand side enough such that $D$ falls directly on $\theta$ whenever possible.
After this, the estimates proceed as they did in Lemma~\ref{L05}

We then apply Lemma~\ref{L06} to get the estimate
  \begin{equation}
  \|v(t)\|_{H^r}^2 
   \les
   P(
     \|v_0\|_{H^r},
     \|\psi\|_{H^r},
     \|a\|_{H^{r+1}}
     )
   + \int_0^t P(
     \|\psi\|_{H^r},
     \|\psi_t\|_{H^{r-1}},
     \|a\|_{H^{r+1}},
     \|a_t\|_{H^{r-1}}
   )
   \, ds
   ,
   \label{EQ249}
  \end{equation}
for $t \in [0,T]$ where $T$ depends on $v_0$, $\psi$ and~$a$.
Note the bound is at the level of the original regularity assumptions~\eqref{EQ203}--\eqref{EQ204}.

\emph{Step 5: Solution to the nonlinear problem with original regularity assumptions.}
Now, we derive our solution $(v,q)$ through an approximation argument. 
Let
  \begin{align}
   \begin{split}
    (\psi,\psi_t) &\in L^\infty([0,T];H^r \times H^{r-1})
    ,
    \\
    (\psi^{(n)}, \psi_t^{(n)}) &\in L^\infty ([0,T];H^{r+1}\times H^r)
   \end{split}
   \llabel{EQ250}
  \end{align}
and
  \begin{align}
   \begin{split}
    (a,a_t) &\in L^\infty([0,T]; H^{r+1}\times H^{r-1})
    ,
    \\
    (a^{(n)},a_t^{(n)}) &\in L^\infty([0,T]; H^{r+1} \times H^r)
   \end{split}
   \llabel{EQ251}
  \end{align}
be such that
  \begin{equation}
   (\psi^{(n)}, \psi_t^{(n)}) \to (\psi, \psi_t) 
   \inon{in $L^\infty([0,T]; H^r \times H^{r-1})$}
   \label{EQ290}
  \end{equation}
and 
  \begin{equation}
   (a^{(n)}, a_t^{(n)}) \to (a,a_t)
   \inon{in $L^\infty([0,T]; H^{r+1} \times H^{r-1})$.}
   \label{EQ291}
  \end{equation}
By the previous steps, for each $n$ there exists a solution $(v^{(n)},q^{(n)}) \in L^\infty([0,T];H^r \times H^{r+1})$ to~\eqref{EQ199}--\eqref{EQ202} with $q $ defined up to a constant. 
Due to~\eqref{EQ290}--\eqref{EQ291} and the estimate~\eqref{EQ249}, we have a uniform bound on $v^{(n)}$ in $H^r$ against the respective weaker norms as well as on $\nabla q^{(n)}$ in $H^{r-1}$ through the elliptic estimate~\eqref{EQ14}.
Since $q^{(n)}$ is defined up to a constant, we are free to normalize it by assuming  $\int_\Omega q^{(n)} = 0$ to get $q^{(n)}$ uniformly bounded in~$H^r$. 
By Lemma~\ref{L07}, we know $v_t^{(n)}$ is uniformly bounded in $L^\infty([0,T];H^{r-1})$.
By compactness, we may pass to a subsequence, denoted the same, such that 
  \begin{align}
   \begin{split}
    &v^{(n)} \to v
    \inon{weakly-$\ast$ in $L^\infty([0,T];H^r)$}
    ,
    \\&
    q^{(n)} \to q
    \inon{weakly-$\ast$ in $L^\infty([0,T];H^r)$}
    ,
    \\&
    v_t^{(n)} \to v_t
    \inon{weakly-$\ast$ in $L^\infty([0,T];H^{r-1})$}
    .
   \end{split}
   \llabel{EQ292}
  \end{align}
By the Aubin-Lions lemma (see~\cite{S}), we actually get $v^{(n)} \to v$ strongly in $C([0,T];H^{s})$ for any $s \in [r-1,r)$.

We now pass to the limit in the Euler equation~\eqref{EQ199}. 
Let $\phi \in C_0^\infty(\Omega \times (0,T))$. 
By weak-$\ast$ convergence
  \begin{equation}
   \langle v_t^n - v, \phi \rangle \to 0 
   .
   \llabel{EQ293}
  \end{equation}
For the nonlinear term, we have
  \begin{align}
   \begin{split}
    &|\langle v_m^{(n)} a_{k m}^{(n)}\partial_k v_i^{(n)} - v_m a_{k i} \partial_k v_i, \phi\rangle|
    \\&\indeq \les
    \|v^{(n)} - v\|_{L^\infty H^{r-1}}\|a^{(n)}\|_{L^\infty H^{r+1}} \|\nabla v^{(n)}\|_{L^\infty H^{r}} \|\phi\|_{L^1L^2}
    \\&\indeq 
    + \|v\|_{L^\infty H^{r}}\|a^{(n)} - a\|_{L^\infty H^{r+1}} \|\nabla v^{(n)}\|_{L^\infty H^{r}} \|\phi\|_{L^1 L^2}
    \\&\indeq
    + \|v\|_{L^\infty H^{r}} \|a\|_{L^\infty H^{r+1}} \|\nabla v^{(n)}-\nabla v\|_{L^\infty H^{r-2}} \|\phi\|_{L^1 L^2}
    .
   \end{split}
   \llabel{EQ295}
  \end{align}
From the strong convergence for $v$ and $a$, the right-hand side tends to zero. 
A similar computation gives 
  \begin{equation}
   |\langle \psi_m^{(n)}a_{km}^{(n)} \partial_k v_i^{(n)} - \psi_m a_{k m} \partial_k v_i, \phi\rangle| \to 0
   .
   \llabel{EQ296}
  \end{equation}
For the pressure term, weak-$\ast$ convergence a similar computation as above gives us
  \begin{equation}
   \langle a_{k i}^{(n)} \partial_k q^{(n)} - a_{k i} \partial_k q, \phi\rangle \to 0
   .
   \llabel{EQ294}
  \end{equation}
Next, we look at the divergence condition. 
Indeed,
  \begin{align}
   \begin{split}
    |\langle a_{j i}^{(n)} \partial_j v_i^{(n)} - a_{j i} \partial_j v_i, \phi\rangle|
    &\les
    \|a^{(n)} - a\|_{L^\infty H^{r+1}} \|\nabla v^{(n)}\|_{L^\infty H^{r-1}} \|\phi\|_{L^1L^2}
    \\&\indeq
    + \|a\|_{L^\infty H^{r+1}} \|\nabla v^{(n)} - \nabla v\|_{L^\infty H^{r-2}}\|\phi\|_{L^1L^2}
    ,
    \label{EQ297}
   \end{split}
  \end{align}
which tends to zero.
Recall, however, that
  \begin{equation}
   a_{ji}^{(n)}\partial_j v_i^{(n)} 
   =
   (J^{(n)})^{-1} \frac{1}{|\Omega|} \int_{\partial \Omega}n_j b_{j i}^{(n)} \psi_i^{(n)}
   \rightarrow
   J^{-1} \frac{1}{|\Omega|} \int_{\partial \Omega} n_j b_{j i} \psi_i
   ,
   \label{EQ297b}
  \end{equation}
and the compatibility condition~\eqref{EQ205} guarantees the right-hand-side of~\eqref{EQ297b} to vanish. 
Hence, $a_{ji}\partial_j v_i = 0$.

Finally, we verify the boundary condition. 
Let $\rho \in C_0^\infty(\partial \Omega \times (0,T))$. 
See that 
  \begin{align}
   \begin{split}
    |\langle n_j a_{ji}^n v_i^{(n)} - n_j a_{ji} v_i, \rho\rangle_{\partial \Omega}|
    &\les 
    \|a^{(n)} - a\|_{L^\infty H^{r+1}}\|v^{(n)}\|_{L^\infty H^r}\|\phi\|_{L^1L^2(\partial \Omega)}
    \\&\indeq
    + \| a \|_{L^\infty H^{r+1} }\|v^{(n)} - v\|_{L^\infty H^{r-1}}\|\phi\|_{L^1L^2(\partial\Omega)}
    ,
    \llabel{EQ298}
   \end{split}
  \end{align}
which tends to zero. 
A similar computation gives 
  \begin{equation}
   |\langle n_j a_{j i}^{(n)} \psi_i^{(n)} - n_j a_{j i} \psi_i, \rho\rangle_{\partial \Omega} | \to 0
   ,
   \llabel{EQ299}
  \end{equation}
which gives $n_ja_{j i} (v_i - \psi_i) = 0$.

Hence, $(v,q) \in L^\infty([0,T];H^r \times H^r)$ solves~\eqref{EQ199}--\eqref{EQ202}.  
Since $\phi$ and $\rho$ are arbitrary and $v,q$ are of sufficiently high regularity, the system is satisfied almost everywhere in space and time.  
\end{proof}

\subsection{Verification of compatibility}
In this section, we verify that the elliptic system~\eqref{EQ210}--\eqref{EQ211} is compatible.
We should note that the earlier Neumann problem~\eqref{EQ17} and~\eqref{EQ20} is clearly compatible by its construction.
Recall the system~\eqref{EQ210}--\eqref{EQ211},
  \begin{align}
   \begin{split}
    -\partial_j(b_{j i} a_{k i} \partial_k \tilde{q})
    &= 
    -\partial_j(\partial_t b_{j i} \tilde{v}_i)
    + b_{j i} \partial_j ((\tilde{v}_m - \psi_m)a_{k m}) \partial_k \tilde{v}_i
    \\& \indeq\indeq\indeq\indeq
    - (\tilde{v}_m - \psi_m)a_{k m} \partial_k b_{ji} \partial_j \tilde{v}_i
    + \mathcal{E}
    = \tilde{f}
    \inon{in $\Omega$ }
    ,
   \end{split}
   \label{EQ300}
  \end{align}
with boundary condition
  \begin{align}
   \begin{split}
    - n_j b_{j i} a_{k i} \partial_k \tilde{q}
    &=
    \partial_t (n_j b_{j i} \psi_i)
    - n_j \partial_t b_{j i} \tilde{v}_i
    \\& \indeq
    + (\tilde{v}_m - \psi_m) a_{k m} \partial_k^\tau (n_j b_{j i} \psi_i)
    - (\tilde{v}_m - \psi_m) a_{k m} \partial_k^\tau (n_j b_{j i}) \tilde{v}_i
    = \tilde{g}
    \inon{on $\partial\Omega$}
   \end{split}
   \label{EQ301}
  \end{align}
with 
  \begin{align}
   \begin{split}
    \mathcal{E}(t) 
    &= 
    \frac{1}{|\Omega|}\int_{\partial\Omega} \partial_t (n_j b_{j i} \psi_i) 
    - \frac{1}{|\Omega|} \int_{\partial\Omega} (\tilde{v}_m - \psi_m) a_{k m} \partial_k^\tau (n_j b_{j i}(\tilde{v}_i - \psi_i))
    \\&\indeq
    - \frac{1}{|\Omega|}\int_{\partial \Omega} (\tilde{v}_m - \psi_m) a_{k m}n_k n_jb_{j i} n_\ell \partial_\ell v_i
    + \frac{1}{|\Omega|} \int_\Omega (\tilde{v}_m - \psi_m)a_{k m} \partial_k(b_{ji} \partial_j \tilde{v}_i)
    .
   \end{split}
   \llabel{EQ302}
  \end{align}
\begin{lemma}
\label{L08}
The system~\eqref{EQ300}--\eqref{EQ301} satisfies 
  \begin{equation}
   \int_\Omega \tilde{f}
   =
   \int_{\partial \Omega} \tilde{g}
   .
   \llabel{EQ303}
  \end{equation}
\end{lemma}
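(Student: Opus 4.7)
The plan is to compute $\int_\Omega \tilde{f}$ and $\int_{\partial\Omega}\tilde{g}$ separately, use the Piola identity and the divergence theorem to reduce $\int_\Omega \tilde{f}$ to boundary integrals plus a single volume integral that will be cancelled by the corresponding term in $\mathcal{E}$, and then show term-by-term matching with $\int_{\partial\Omega}\tilde{g}$. Note that $\mathcal{E}(t)$ is spatially constant so $\int_\Omega \mathcal{E}\,dx = |\Omega|\,\mathcal{E}(t)$ recovers the four terms defining $\mathcal{E}$ verbatim.

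First I would dispatch the easy piece: the divergence theorem applied to $-\partial_j(\partial_t b_{ji}\tilde v_i)$ produces $-\int_{\partial\Omega} n_j\partial_t b_{ji}\tilde v_i$, which precisely matches the second term of $\tilde g$. Next, the two remaining interior contributions
\[
 T_2+T_3 = b_{ji}\partial_j((\tilde v_m-\psi_m)a_{km})\partial_k\tilde v_i - (\tilde v_m-\psi_m)a_{km}\partial_k b_{ji}\partial_j \tilde v_i
\]
can be rearranged, using Piola $\partial_j b_{ji}=0$ alone (crucially \emph{without} a divergence condition on $\tilde v$), into
\[
 T_2+T_3 = \partial_j\bigl(b_{ji}(\tilde v_m-\psi_m)a_{km}\partial_k\tilde v_i\bigr) - (\tilde v_m-\psi_m)a_{km}\partial_k(b_{ji}\partial_j\tilde v_i).
\]
The second summand is exactly the last term appearing in $\mathcal{E}$, so it cancels in the sum $\int_\Omega(T_2+T_3)+|\Omega|\mathcal{E}$. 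The first summand, after the divergence theorem, contributes $\int_{\partial\Omega} n_j b_{ji}(\tilde v_m-\psi_m)a_{km}\partial_k \tilde v_i$, on which I would split $\partial_k=\partial_k^\tau+n_k n_\ell\partial_\ell$; the normal piece cancels with the $n_kn_\ell$ boundary term in $\mathcal{E}$ (this is the very reason that term was included).

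After these cancellations, and observing that the boundary term $\int_{\partial\Omega}\partial_t(n_jb_{ji}\psi_i)$ in $\mathcal{E}$ coincides with the first term of $\tilde g$, what remains in the difference $\int_\Omega\tilde f - \int_{\partial\Omega}\tilde g$ is a purely tangential boundary integral
\[
 \int_{\partial\Omega}(\tilde v_m-\psi_m)a_{km}\Bigl[n_jb_{ji}\partial_k^\tau\tilde v_i - \partial_k^\tau(n_jb_{ji}(\tilde v_i-\psi_i)) - \partial_k^\tau(n_jb_{ji}\psi_i) + \partial_k^\tau(n_jb_{ji})\tilde v_i\Bigr].
\]
The final step is to apply the product rule to the two $\partial_k^\tau$ terms carrying $\tilde v_i-\psi_i$ and $\psi_i$: the $\psi_i$ pieces telescope against each other, while the remaining terms collapse to $\partial_k^\tau(n_jb_{ji}\tilde v_i)=\partial_k^\tau(n_jb_{ji})\tilde v_i+n_jb_{ji}\partial_k^\tau\tilde v_i$, which exactly cancels the bracket to zero.

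The main obstacle is purely bookkeeping: several structurally similar tangential terms need to be combined with the correct signs, and one must be careful that the Piola identity (not the divergence condition, which fails for $\tilde v$) is what enables the key rewriting of $T_2+T_3$. The structure of $\mathcal{E}$ is tailored so that each of its four pieces is precisely the correction required at one stage of the above reduction, which is why compatibility holds identically in $\tilde v$.
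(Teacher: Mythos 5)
Your proposal is correct and follows essentially the same route as the paper: integrate $\tilde f$ over $\Omega$, use the Piola identity to write the advective contributions as a total divergence minus the interior term built into $\mathcal{E}$, apply the divergence theorem, split $\partial_k$ into tangential and normal parts on $\partial\Omega$ so that the normal piece cancels the corresponding term of $\mathcal{E}$, and finish with the tangential product rule to match the remaining boundary terms with $\tilde g$. The only difference is organizational (you rearrange $T_2+T_3$ before invoking the divergence theorem, while the paper integrates by parts in $x_j$ first), and your remark that only Piola, not a divergence condition on $\tilde v$, is needed matches the paper's argument.
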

\begin{proof}
Integrating $\tilde{f}$ over $\Omega$, we have 
  \begin{align}
   \begin{split}
    \int_\Omega \tilde{f}
    &= 
    -\int_\Omega \partial_j(\partial_t b_{j i} \tilde{v}_i)
    + \int_\Omega \partial_j(b_{j i} (\tilde{v}_m - \psi_m)a_{km}) \partial_k \tilde{v}_i
    - \int_\Omega (\tilde{v}_m - \psi_m)a_{k m} \partial_k b_{j i} \partial_j \tilde{v}_i
    \\&\indeq
    + \int_{\partial\Omega} \partial_t (n_j b_{j i} \psi_i)
    - \int_{\partial \Omega} (\tilde{v}_m - \psi_m)a_{k m} \partial_k^\tau (n_j b_{j i} (\tilde{v}_i - \psi_i))
    \\&\indeq
    - \int_{\partial \Omega} (\tilde{v}_m - \psi_m) a_{k m}n_k n_jb_{j i} n_\ell \partial_\ell v_i
    + \int_\Omega (\tilde{v}_m - \psi_m)a_{k m} \partial_k (b_{j i} \partial_j \tilde{v}_i)
    ,
   \end{split}
   \label{EQ304}
  \end{align}
where we used Piola in the third term. 
Focusing on the third term, we integrate by parts in the $x_j$ variable, decompose $\partial_k v_i$ into tangential and normal derivatives on the boundary, and use the product rule,
  \begin{align}
   \begin{split}
    \int_{\Omega} \partial_j (b_{j i}(\tilde{v}_m - \psi_m)a_{k m})\partial_k \tilde{v}_i 
    &= 
    \int_{\partial \Omega} (\tilde{v}_m - \psi_m) a_{k m} n_j b_{j i} \partial_k \tilde{v}_i
    - \int_\Omega (\tilde{v}_m - \psi_m)a_{k m} b_{j i} \partial_k \partial_j \tilde{v}_i
    \\&=
    \int_{\partial\Omega} (\tilde{v}_m - \psi_m) a_{k m} \partial_k^\tau (n_j b_{j i} \tilde{v}_i)
    - \int_{\partial \Omega} (\tilde{v}_m - \psi_m) a_{k m} \partial_k^\tau (n_j b_{j i}) \tilde{v}_i
    \\&\indeq
    + \int_{\partial \Omega} (\tilde{v}_m - \psi_m) a_{k m} n_k n_j b_{j i} n_\ell \partial_\ell v_i
    - \int_\Omega (\tilde{v}_m - \psi_m)a_{k m} \partial_k (b_{j i} \partial_j \tilde{v}_i)
    \\&\indeq
    + \int_\Omega (\tilde{v}_m - \psi_m) a_{k m} \partial_kb_{j i} \partial_j \tilde{v}_i
    .
   \end{split}
   \llabel{EQ305}
  \end{align}
Substituting this into~\eqref{EQ304}, some cancellations and the divergence theorem give
  \begin{align}
   \begin{split}
    \int_\Omega \tilde{f}
    &= 
    -\int_{\partial \Omega} n_j\partial_t b_{j i} \tilde{v}_i
    + \int_{\partial\Omega} \partial_t (n_j b_{j i} \psi_i)
    \\&\indeq
    + \int_{\partial \Omega} (\tilde{v}_m - \psi_m) a_{k m} \partial_k^\tau (n_j b_{j i} \psi_i)
    - \int_{\Omega} (\tilde{v}_m - \psi_m)a_{k m} \partial_k^\tau (n_j b_{j i})\tilde{v}_i
    .
   \end{split}
   \llabel{EQ306}
  \end{align}
The right-hand side is exactly $\int_{\partial\Omega} \tilde{g}$, so the proof is complete. 
\end{proof}

\begin{remark}
\label{R01}
Note that the compatibility condition~\eqref{EQ205} is necessary for any solution to the system~\eqref{EQ199}--\eqref{EQ200}. 
To see this, write
  \begin{equation}
   \int_{\partial \Omega} n_j b_{j i} \psi_i 
   =
   \int_{\partial\Omega} n_j b_{j i} v_i
   =
   \int_{\partial \Omega}b_{j i} \partial_j v_i
   = 0
   ,
   \label{EQ307}
  \end{equation}
where we used the boundary and divergence conditions along with the divergence theorem. 
\end{remark}

\startnewsection{Beale-Kato-Majda Criterion}{sec04}
Finally, we consider the breakdown of solutions in $H^r$ and show that it solely depends on the $\BMO$ norm of the variable vorticity $\zeta_i = \eps_{ijk}b_{\ell j} \partial_\ell v_k$ and the $H^1$ norm of the velocity~$v$.
In particular, we provide a theorem in the case $r=3$ and when the coefficients $a,\psi$ are taken with some higher regularity.
For ease of notation, we keep $r$ in place of $3$ in the following computations. 

For $r \in (2.5,3)$, the matter is more complicated due to the method of extension in acquiring estimates on~$\|v\|_{H^r}$.
We will not treat that case here.  

\begin{theorem}
\label{T09}
Let $v$ be a solution to~\eqref{EQ06}--\eqref{EQ08} in the class $C([0,T]; H^r)$ where $r = 3$. 
Suppose that $T = \hat{T}$ is the first time such that $v \notin C([0,T]; H^r(\Omega))$ with
  \begin{equation}
   (\psi,\psi_t) \in L^\infty([0,\infty); H^{r+1} \times H^r)
   \llabel{EQ401}
  \end{equation}
and 
  \begin{equation}
   (a,a_t) \in L^\infty([0,\infty); H^{r+2} \times H^r)
   .
   \llabel{EQ402}
  \end{equation}
Then 
  \begin{equation}
   \int_0^{\hat{T}} (\|v(t)\|_{H^1} + \|\zeta(t)\|_{\BMO})\, dt
   =
   \infty
   .
   \llabel{EQ403}
  \end{equation}
\end{theorem}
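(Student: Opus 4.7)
The plan is to argue by contradiction: assume $\int_0^{\hat T}(\|v\|_{H^1}+\|\zeta\|_{\BMO})\,dt<\infty$, and produce an a priori bound on $\|v(t)\|_{H^3}$ for $t\in[0,\hat T)$. Together with $v_t\in L^\infty([0,\hat T];H^{2})$ from Lemma~\ref{L07} and weak-$\ast$ compactness, this yields $v\in C([0,\hat T];H^{3})$, contradicting the definition of $\hat T$.

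First, I extend the variable vorticity equation to $\RR^3$ as in Lemmas~\ref{L04}--\ref{L05}, obtaining \eqref{EQ61}. Applying $\Lambda^{2}$ to \eqref{EQ61} and testing with $\Lambda^{2}\theta$ in $L^2(\RR^3)$, I reproduce the seven-term energy identity \eqref{EQ71} at $r=3$. Kato--Ponce commutator bounds in $\RR^3$ handle the transport and stretching terms, the critical contribution being
\[
\int \theta_p\tilde b_{mp}\partial_m\Lambda^{2}\tilde v_i\,\Lambda^{2}\theta_i \les \|\nabla v\|_{L^\infty}\|\theta\|_{H^{2}}^2 + (\text{data})\,\|\theta\|_{H^{2}}^2,
\]
which is the only place where a super-linear dependence on $v$ enters. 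The forcing $f$ in \eqref{EQ60}, containing the pressure term $\eps_{ijk}b_{\ell j}\partial_\ell a_{rk}\partial_r q$, is controlled in $H^{2}$ via Lemma~\ref{L02} by $P(\|v\|_{H^{3}},\text{data})$. After inverting Lemma~\ref{L06} at $r=3$ (which, modulo assumed data, identifies $\|v\|_{H^{3}}$ with $\|\zeta\|_{H^{2}}+1$), this yields an inequality
\[
\frac{d}{dt}\bigl(1+\|v\|_{H^{3}}^2\bigr)\les P(t)\bigl(1+\|\nabla v\|_{L^\infty}\bigr)\bigl(1+\|v\|_{H^{3}}^2\bigr),
\]
with $P(t)$ polynomial in the data norms (uniformly bounded on $[0,\hat T]$ by \eqref{EQ401a}--\eqref{EQ402a}).

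Second, I establish the key logarithmic inequality
\[
\|\nabla v\|_{L^\infty} \leq C\bigl(1+\|v\|_{H^{1}}+\|\zeta\|_{\BMO}\bigr)\bigl(1+\log(1+\|v\|_{H^{3}})\bigr).
\]
This has two ingredients. The first is a BMO analog of Lemma~\ref{L06}: redoing that proof with Calder\'on--Zygmund in $\BMO$ in place of Sobolev elliptic regularity for the operator $L=-\partial_m(b_{mk}b_{\ell k}\partial_\ell\cdot)$, whose coefficients are in $H^{r+2}\hookrightarrow C^{3}$ by the enhanced assumption \eqref{EQ402a}, produces
\[
\|\nabla v\|_{\BMO} \les \|\zeta\|_{\BMO} + \|\psi_k b_{jk}n_j\|_{C^{1,\alpha}(\partial\Omega)} + \|v\|_{L^2},
\]
the divergence term vanishing by \eqref{EQ06}$_2$. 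The second ingredient is the Kozono--Taniuchi bound $\|\nabla v\|_{L^\infty}\les(1+\|\nabla v\|_{\BMO})(1+\log^{+}\|v\|_{H^{3}})+\|v\|_{L^2}$. Multiplying and absorbing the $L^2$ term into $\|v\|_{H^1}$ produces the displayed estimate.

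Substituting this back into the $H^3$ differential inequality and writing $\Phi=\log(1+\|v\|_{H^{3}}^2)$, I obtain
\[
\Phi'(t) \les P(t)\bigl(1+\|v\|_{H^{1}}+\|\zeta\|_{\BMO}\bigr)\bigl(1+\Phi(t)\bigr).
\]
Osgood's (logarithmic Gronwall) lemma then yields
\[
\Phi(t)\leq(1+\Phi(0))\exp\!\Bigl(C\!\int_0^{\hat T}\!(1+\|v\|_{H^{1}}+\|\zeta\|_{\BMO})\,ds\Bigr)<\infty,
\]
so $\|v\|_{H^{3}}$ stays bounded on $[0,\hat T)$, which provides the contradiction.

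The principal obstacle is the BMO-version of the variable div-curl lemma: the original Lemma~\ref{L06} relies crucially on $L^2$ elliptic regularity, so one must rerun its Neumann/Dirichlet decomposition carefully and invoke Calder\'on--Zygmund in $\BMO$, which requires the strengthened assumption $a\in L^\infty H^{r+2}$ in \eqref{EQ402a}. It is precisely the $\|v\|_{L^2}$ term in the BMO div-curl estimate (inherited from the last term in \eqref{EQ102a}) that forces the presence of $\|v\|_{H^{1}}$ in the final BKM criterion, as flagged in the introduction. A secondary technical point is that the pressure appears polynomially in $\|v\|_{H^{3}}$ inside the forcing $f$, so one must verify that this enters the energy identity only at the correct order for Osgood's lemma to apply; this is the case because pressure contributions multiply $\|\theta\|_{H^{2}}$ linearly rather than quadratically, leaving room for the logarithmic factor.
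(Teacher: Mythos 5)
Your overall architecture (contradiction, a logarithmic bound for $\|\nabla v\|_{L^\infty}$ via a BMO elliptic estimate plus Kozono--Taniuchi, then a log-Gronwall/Osgood closure) matches the paper, but the route you take to the $H^3$ differential inequality — an $H^2$ energy estimate on the extended vorticity equation \eqref{EQ61} — has a genuine gap at the forcing term, and it is precisely the gap that forces the paper onto a different path. The forcing $f$ in \eqref{EQ60} contains the pressure term $\eps_{ijk}b_{\ell j}\partial_\ell a_{rk}\partial_r q$, and you control it ``in $H^2$ via Lemma~\ref{L02} by $P(\|v\|_{H^3},\text{data})$.'' But the bound of Lemma~\ref{L02} (see \eqref{EQ43}) is quadratic in $\|v\|_{H^3}$: it comes from $\|b\|\,(\|v\|+\|\psi\|)\,\|a\|\,\|v\|_{H^r}$, with no $\|\nabla v\|_{L^\infty}$ structure. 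Feeding this into \eqref{EQ71} gives a contribution of size $\|v\|_{H^3}^2\|\theta\|_{H^2}$, i.e.\ effectively $\|v\|_{H^3}^3$, so the claimed inequality $\frac{d}{dt}(1+\|v\|_{H^3}^2)\les P(t)(1+\|\nabla v\|_{L^\infty})(1+\|v\|_{H^3}^2)$ does not follow, and Osgood's lemma cannot close: in your notation one gets $\Phi'\les e^{\Phi/2}$ with a coefficient that is merely bounded, not controlled by the BKM integrand. Your closing remark that ``pressure contributions multiply $\|\theta\|_{H^2}$ linearly rather than quadratically'' does not address this, since the factor multiplying $\|\theta\|_{H^2}$ is itself superlinear in $\|v\|_{H^3}$. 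This is exactly why the paper abandons the vorticity-extension route for the BKM criterion (the introduction flags the forcing terms in the variable vorticity equation as the obstruction) and instead performs direct $D^\alpha$, $|\alpha|\le 3$, energy estimates on the velocity equation, pairing them with the \emph{recast} pressure problem \eqref{EQ46}--\eqref{EQ48}, whose right-hand side carries at most one derivative of $v$ per factor and therefore yields the product-structured bound $\|\nabla q\|_{H^3}\les\|v\|_{W^{1,\infty}}\|v\|_{H^3}$. To salvage your route you would at minimum need this sharper pressure estimate in place of Lemma~\ref{L02}; moreover, the commutator and stretching terms in \eqref{EQ71} at $r=3$ are not bounded by $\|\nabla v\|_{L^\infty}\|\theta\|_{H^2}^2$ as you assert — Kato--Ponce produces factors such as $\|\Lambda^3((\tilde v-\tilde\psi)\tilde b)\|_{L^2}\|\theta\|_{L^\infty}$ and $\theta_p\tilde b_{mp}\partial_m\Lambda^2\tilde v_i$ involves three derivatives of $\tilde v$, so one needs $\|\theta\|_{L^\infty(\mathbb{R}^3)}$, which is neither $\|\zeta\|_{\BMO}$ nor obviously controlled by $\|\nabla v\|_{L^\infty(\Omega)}$ outside $\Omega$.

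Two further points. First, your BMO div-curl step is essentially the paper's Lemma~\ref{L10} (proved by splitting $v$ into a Neumann problem driven by the antisymmetric part $\varphi$, estimated in $\BMO$ via Troianiello, plus a lower-order piece estimated in $L^3$), so that ingredient is sound, and it is indeed the source of the $\|v\|_{H^1}$ term in the criterion. Second, there is a rigor gap you pass over: the $H^3$ energy identity cannot simply be performed on a solution that is only in $C([0,T];H^3)$. The paper spends the second half of its proof on this — deriving \eqref{EQ418} first for $H^4$ solutions, approximating $v_0$ by data $v_0^n\in H^4$ compatible with the divergence and boundary conditions (via mollification and the projection $P_a$), showing the times of existence can be taken uniform, proving $v^n\to v$ in $W^{1,\infty}$ to pass to the limit in the estimate, and then iterating the local existence theorem to continue the solution past $\hat T$, which is also the cleaner way to contradict the definition of $\hat T$; note that weak-$\ast$ compactness plus $v_t\in L^\infty H^2$ alone only gives $v\in C_w([0,\hat T];H^3)$, not continuity in the $H^3$ norm.
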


Before the proof of the main theorem, we provide a preliminary elliptic estimate.  
We note the following lemma achieves this as well with $r\in (2.5,3)$ and the original regularity assumptions on $\psi$ and $a$ in~\eqref{EQ01}--\eqref{EQ02}. 
\begin{lemma}
\label{L10}
With the same hypotheses as in Theorem~\ref{T09}, we have 
  \begin{equation}
   \|v\|_{\BMO}
   + \|\nabla v\|_{\BMO}
   \les
   \|v\|_{H^1}
   + \|\zeta\|_{\BMO}
   .
   \llabel{EQ404}
  \end{equation}
\end{lemma}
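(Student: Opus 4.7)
The plan is to adapt the variable div-curl argument of Lemma~\ref{L06} to the BMO setting. As in that proof, I introduce the antisymmetric tensor $\varphi_{kj}=b_{\ell j}\partial_\ell v_k - b_{\ell k}\partial_\ell v_j$, for which $\|\varphi\|_{\BMO}\lesssim \|\zeta\|_{\BMO}$ since each component is a linear combination of components of $\zeta$ with smooth coefficients. The computations of Lemma~\ref{L06} then produce, for each index $j$, a divergence-form elliptic equation
\[
-\partial_m(b_{mk}b_{\ell k}\partial_\ell v_j)=\div F_j \quad\text{in }\Omega,\qquad -b_{mk}b_{\ell k}\partial_\ell v_j\,n_m=G_j\quad\text{on }\partial\Omega,
\]
where $F_j$ and $G_j$ collect terms involving $\varphi$, smooth factors in $b$ and $\nabla b$, lower-order pieces in $v$, and, via the boundary condition~\eqref{EQ08}, the smooth quantity $\nu_i b_{ip}\psi_p$.

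Next, I would invoke a BMO analogue of Lemma~\ref{L03}: for $Lu=-\partial_m(d_{m\ell}\partial_\ell u)$ uniformly elliptic with $d\in C^2(\bar{\Omega})$, solutions of $Lu=\div f$ with Neumann datum $-n_m d_{m\ell}\partial_\ell u=g$ satisfy
\[
\|\nabla u\|_{\BMO(\Omega)}\lesssim \|f\|_{\BMO(\Omega)}+\|g\|_{\BMO(\partial\Omega)}+\|u\|_{L^2(\Omega)}.
\]
This can be proved via Campanato's characterization of BMO combined with standard interior/boundary localization and freezing-of-coefficients arguments. The enhanced regularity $(a,a_t)\in H^{r+2}\times H^r$ assumed in Section~\ref{sec04} ensures that $d=b^T b\in C^2(\bar{\Omega})$ with constants depending polynomially on $\|b\|_{H^{r+2}}$.

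Applying this to each $v_j$ and bounding the $\varphi$-dependent parts of $F_j$ and $G_j$ by $\|\zeta\|_{\BMO}$, the smooth $\psi$-data by its own norm, and the lowest-order purely $v$-dependent pieces by $\|v\|_{L^2}\lesssim \|v\|_{H^1}$, yields $\|\nabla v\|_{\BMO}\lesssim \|v\|_{H^1}+\|\zeta\|_{\BMO}$. For the $\|v\|_{\BMO}$ piece, since BMO is insensitive to additive constants, $\|v\|_{\BMO}=\|v-\bar{v}\|_{\BMO}\lesssim \|\nabla v\|_{L^3(\Omega)}$ by the Sobolev embedding $W^{1,3}(\Omega)\hookrightarrow \BMO(\Omega)$ in three dimensions; since $\|\nabla v\|_{L^3}\lesssim \|\nabla v\|_{\BMO}+\|\nabla v\|_{L^2}$ by John--Nirenberg on the bounded domain, the just-proved gradient bound closes the argument.

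The main obstacle is twofold. First, establishing the BMO Neumann elliptic estimate up to the boundary with polynomial dependence on $\|b\|_{H^{r+2}}$: although classical in spirit, this requires a careful Campanato-style localization at $\partial\Omega$ that is not entirely standard in fluid-structure references. Second, and more subtly than in Lemma~\ref{L06}, the mixed contributions in $F_j$ and $G_j$ that contain $\nabla v$ with smooth coefficients do not enjoy the loss-of-one-derivative hierarchy that the $H^{r-1}$-based argument exploited; these must either be absorbed into the left-hand side (using the variable divergence identity $b_{mk}\partial_m v_k=0$ to kill the most dangerous cross term, exactly as in the proof of Lemma~\ref{L06}) or reorganized by an additional integration by parts so that only $\|v\|_{H^1}$, not $\|\nabla v\|_{\BMO}$, appears on the right-hand side.
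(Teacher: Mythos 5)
Your skeleton (introduce $\varphi_{kj}=b_{\ell j}\partial_\ell v_k-b_{\ell k}\partial_\ell v_j$, recast the identity from Lemma~\ref{L06} as a divergence-form Neumann problem, apply an up-to-the-boundary BMO elliptic estimate) is indeed the paper's strategy, except that the paper cites Troianiello for the BMO estimate instead of re-deriving it by Campanato localization. The genuine gap is in how you treat the data that do \emph{not} carry a $\varphi$ factor. The right-hand sides produced by the computation in Lemma~\ref{L06} contain terms of the schematic form $\partial_\ell\bigl(b_{mk}\partial_m b_{\ell j}v_k\bigr)$ and, worse, $\partial_\ell b_{mk}\,\partial_m(b_{\ell j}v_k)$, i.e.\ products of smooth coefficients with $v$ and with $\nabla v$ that are not inside a divergence. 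You propose to bound ``the lowest-order purely $v$-dependent pieces by $\|v\|_{L^2}$,'' but the BMO norm of such a product is not controlled by $\|v\|_{L^2}$, and the $\nabla v$-terms are not controlled in BMO by $\|v\|_{H^1}$ either; so the data norm in your single BMO estimate already requires control of $\nabla v$ in BMO (or $L^\infty$), which is circular. You flag this yourself in the last paragraph, but the two remedies you mention (absorption, an extra integration by parts) are not carried out, and absorption cannot work directly because these terms enter through the \emph{data} of the elliptic estimate, not additively with a small constant. A secondary issue: substituting the boundary condition to replace $v$ by $\psi$ on $\partial\Omega$ and then bounding that datum ``by its own norm'' would introduce an additive $\psi$-dependent constant that is not present on the right-hand side of the stated inequality; the paper avoids this by working with $U=v_pb_{ip}\nu_i$ itself.

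The paper closes exactly this gap by a splitting $v_j=u_j+w_j$: the piece $w_j$ solves the Neumann problem whose interior and boundary data involve only $\varphi$ (so $\|\nabla w\|_{\BMO}\les\|v\|_{H^1}+\|\zeta\|_{\BMO}$ by the Troianiello BMO estimate together with $\|\varphi\|_{\BMO}\les\|\zeta\|_{\BMO}$), while the piece $u_j$ carries all the lower-order $v$-terms and is estimated only in $L^3$, namely $\|D^2u\|_{L^3}\les\|v\|_{W^{1,3}}$ by Calder\'on--Zygmund theory. Combining via the embedding $\|f\|_{\BMO}\les\|\nabla f\|_{L^3}$ gives $\|\nabla v\|_{\BMO}\les\|v\|_{H^1}+\|\zeta\|_{\BMO}+\|v\|_{W^{1,3}}$, and the last term is removed by interpolating $L^3$ between $L^2$ and BMO and absorbing the BMO factor into the left-hand side; the $\|v\|_{\BMO}$ part then follows as in your last step. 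Without this splitting (or an equivalent device that measures the non-$\varphi$ data in $L^3$ rather than BMO), your argument as written does not close.
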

\begin{proof}
We provide an elliptic estimate for the system given by~\eqref{EQ89} and~\eqref{EQ99}. 
Split up $v_j = u_j + w_j$, where $w_j$ is a solution to the Neumann problem
  \begin{align}
   \begin{split}
    -\partial_m (b_{m k} b_{\ell k} \partial_\ell w_j)
    &=
    \partial_m (b_{m k} \varphi_{kj})
    \\
    -n_m b_{m k} b_{\ell k} \partial_\ell v_j 
    &= 
    n_m b_{m k} \varphi_{kj}
    ,
   \end{split}
   \llabel{EQ405}
  \end{align}
with $\varphi$ defined in~\eqref{EQ84}. 
The remaining $u_j$ is a solution to a Neumann problem containing the lower-order terms in $v$,
  \begin{align}
   \begin{split}
    -\partial_m (b_{m k} b_{\ell k} \partial_\ell u_j)
    &=
    -\partial_\ell (b_{m k} \partial_m b_{\ell j} v_k)
    -\partial_\ell b_{m k} \partial_m (b_{\ell k} v_k)
    \\
    -n_m b_{m k} b_{\ell k} \partial_\ell u
    &= 
    F_j|_{\partial \Omega}
    ,
   \end{split}
   \llabel{EQ26}
  \end{align}
where $\nu \in C^\infty(\bar{\Omega})$ extends the outward normal vector $n$, and 
  \begin{equation}
   F_j
   =
   b_{\ell j} \partial_\ell(v_k b_{m k} \nu_m)
   - \partial_\ell (b_{m k} \nu_m) b_{\ell j} v_k
   .
   \llabel{EQ406}
  \end{equation}
Note that $j = 1,2,3$ and all other indices are summed  except that $k$ is summed in $\{1,2,3\}\setminus \{j\}$. 
For $w_j$, we obtain
  \begin{equation}
    \|\nabla w\|_{\BMO}
    \les
    \|v\|_{H^1}
    + \|\zeta\|_{\BMO}
    ,
    \llabel{EQ407}
  \end{equation}
which can be found in Troianiello~\cite[Theorem 3.16(ii)]{T}. 
For $u_j$, we have the $L^3$ estimate,
  \begin{equation}
   \|D^2 u\|_{L^3} 
   \les
   \|v\|_{W^{1,3}}
   .
   \llabel{EQ408}
  \end{equation}
These two estimates together with $v = u + w$ give 
  \begin{equation}
   \|\nabla v\|_{\BMO}
   \les
   \|\nabla u\|_{\BMO}
   + \|\nabla w\|_{\BMO}
   \les 
   \|v\|_{H^1} 
   + \|\zeta\|_{\BMO}
   + \|v\|_{W^{1,3}}
   ,
   \llabel{EQ409}
  \end{equation}
where we used the embedding $\|f\|_{\BMO} \les \|\nabla f\|_{L^3}$.
Interpolation between $L^2$ and $\BMO$ along with the same embedding leaves us with 
  \begin{equation}
   \|\nabla v \|_{\BMO}
   \les 
   \|v\|_{H^1} + \|\zeta\|_{\BMO}
   .
   \llabel{EQ410}
  \end{equation}
Using embedding and interpolation once more, we get 
  \begin{equation}
   \|v\|_{\BMO}
   + \|\nabla v\|_{\BMO}
   \les
   \|v\|_{H^1} 
   + \|\zeta\|_{\BMO}
   .
   \llabel{EQ411}
  \end{equation}
\end{proof}

\begin{proof}[Proof of Theorem~\ref{T09}]
Towards a contradiction, we assume
  \begin{equation}
   \int_0^{\hat{T}} (
     \|v(t)\|_{H^1} 
     + \|\zeta(t)\|_{\BMO}
   )
   \, dt
   <
   \infty
   .
   \llabel{EQ412}
  \end{equation}
We first provide an estimate for a solution $v(t) \in H^{r+1}$ and proceed with an approximation argument. 

Assume that $v(t) \in H^{r+1}$ is a solution to 
  \begin{equation}
   \partial_t v_i
   + (v_m - \psi_m) a_{k m} \partial_k v_i
   + a_{k i} \partial_k q
   =
   0
   .
   \label{EQ413}
  \end{equation}
For $|\alpha| \leq 3$, we apply  $D^\alpha$ and test with~$D^\alpha v_i$. 
After integrating by parts and using the boundary condition $n_k a_{k m} (v_m - \psi_m) = 0$, we have
  \begin{align}
   \begin{split}
    \frac{1}{2}\partial_t \int |D^\alpha v|^2
    &= 
    \frac{1}{2}\int \partial_k((v_m - \psi_m) a_{k m}) |D^\alpha v|^2
    - \int D^\alpha (a_{ki} \partial_k q) D^\alpha v_i    
    \\&\indeq
    - \int \big\{ 
            D^\alpha[(v_m - \psi_m)a_{k m}\partial_k v_i] 
            - (v_m - \psi_m)a_{k m} D^\alpha \partial_k v_i
    \big\} D^\alpha v_i
    .
   \end{split} 
   \llabel{EQ414}
  \end{align}
Applying a commutator estimate, we get 
  \begin{equation}
   \partial_t \|D^\alpha v\|_{L^2}^2
   \les
   \|v\|_{W^{1,\infty}} \|v\|_{H^r}^2
   + \|\nabla q \|_{H^r}\|v\|_{H^r}
   .
   \llabel{EQ415}
  \end{equation}
The pressure term satisfies the elliptic Neumann problem given in~\eqref{EQ46} and~\eqref{EQ48}. 
Elliptic regularity and the Leibniz rule then give us
  \begin{equation}
   \|\nabla q\|_{H^r} 
   \les
   \|v\|_{W^{1,\infty}}\|v\|_{H^r}
   ,
   \llabel{EQ416}
  \end{equation}
so that 
  \begin{equation}
   \partial_t \|v\|_{H^r}^2 
   \les 
   \|v\|_{W^{1,\infty}} \|v\|_{H^r}^2
   .
   \label{EQ417}
  \end{equation}
Applying Gronwall's lemma, we have
  \begin{equation}
   \|v(t)\|_{H^r}
   \les
   \|v_0\|_{H^r} \exp \Bigg(
        \int_0^T \|v(t)\|_{W^{1,\infty}}\, dt
   \Bigg)
   \comma
   t \in [0,T]
   ,
   \label{EQ418}
  \end{equation}
with $T \leq \hat{T}$. 
We can control $\|v(t)\|_{W^{1,\infty}}$ using the bound (see~\cite{KT}),
  \begin{equation}
   \|f\|_{L^\infty}
   \les 
   1 + \|f\|_{\BMO} (1+ \log^+(\|f\|_{W^{s,p}}))
   ,
   \llabel{EQ419}
  \end{equation}
which holds for any $f \in W^{s,p}$ with $s > n/p$ and $p \in (1,\infty)$. 
Applying this to $v$ and $\nabla v$ and using the trivial bound $\|v\|_{H^{r-1}} \leq \|v\|_{H^r}$, Lemma~\ref{L10} gives us 
  \begin{equation}
          \|v\|_{W^{1,\infty}} 
   \les 
   1 + (\|v\|_{H^1} + \|\zeta\|_{\BMO})(1 + \log^+(\|v\|_{H^r}))
   .
   \label{EQ420}
  \end{equation}
Next, we substitute~\eqref{EQ420} into~\eqref{EQ418} to get 
  \begin{equation}
   \log^+(\|v\|_{H^r})
   \les
   F(T)
   + \int_0^T L(s) \log^+(\|v\|_{H^r})\, ds
   ,
   \label{EQ421}
  \end{equation}
where we shorthanded 
  \begin{equation}
   L(t)
   = 
   \|v(t)\|_{H^1} + \|\zeta(t)\|_{\BMO}
   \llabel{EQ423}
  \end{equation}
and 
  \begin{equation}
   F(T)
   = 
   \log(\|v_0\|_{H^r})
   + T
   + \int_0^T L(s)\, ds
   .
   \llabel{EQ422b}
  \end{equation}
Finally, we apply Gronwall's lemma to~\eqref{EQ421} and obtain
  \begin{equation}
   \|v(t)\|_{H^r}
   \les
   \exp\Bigg( 
           F(\hat{T}) 
           + \int_0^{\hat{T}} F(s) L(s) \exp \Bigg( 
                   \int_0^{\hat{T}} L(\tau) \, d\tau
                   \Bigg)
           \Bigg)
   =:
   K
   .
   \label{EQ423}
  \end{equation}

This bound holds for a solution $v(t) \in H^{r+1}$. 
However, we can show that~\eqref{EQ418} holds for a solution $v(t) \in H^r$ using an approximation argument. 
After verifying this, the rest of the above argument up to~\eqref{EQ423} follows as before. 

Let $v_0 \in H^r$ satisfy the divergence and boundary conditions at $t=0$. 
So Theorem~\ref{T02} gives us a solution $v \in C([0,T_1];H^r)$, where $T_1$ depends only on $2K$ and the coefficients $a$ and $\psi$ which are bounded uniformly past $\hat{T}$ in their respective norms. 
Next, we can find a sequence $v_0^n \in H^{r+1}$ such that $v_0^n \to v_0$ in $H^r$ and which satisfies the divergence and boundary conditions at $t =0$. 
The existence of such a sequence is ensured by mollification and the continuity of a projection operator $P_a$ defined by 
  \begin{equation}
   (P_a f)_i 
   = 
   f_i - a_{k i} \partial_k \phi
   ,
   \llabel{EQ424}
  \end{equation}
where $\phi$ solves the elliptic Neumann problem
  \begin{align}
    \begin{split}
   \partial_j(b_{ji} a_{ki} \partial_k \phi)
   &= 
   \partial_j (b_{ji} f_i)
   \\
   n_j b_{j i} a_{k i} \partial_k \phi
   &=
   n_j b_{j i} (f_i - \psi_i)
   .
  \end{split}
   \llabel{EQ425}
  \end{align}
Hence, we get a sequence of solutions $v^n \in C([0,T_n];H^{r+1})$. 
In particular, we can take the time of existence to be $T_1$ for all $n$ sufficiently large. 
To see this, first note that $\|v_0\|_{H^r} \leq 2K$, which is immediate by inspecting the right-hand-side of~\eqref{EQ423}. 
By $H^r$ convergence, $v_0^n$ enjoys the same bound for large $n$, and by repeating an identical estimate to get~\eqref{EQ417}, we can arrive at 
  \begin{equation}
   \partial_t \|v\|_{H^{r+1}}
   \les
   \|v\|_{H^r}
   \|v\|_{H^{r+1}}
   .
   \llabel{EQ426}
  \end{equation}
As in, the time of blow-up in the $H^r$ norm must forerun that of the $H^{r+1}$ norm. 
So we can opt for a smaller time of existence than $T^n$ which only depends on $\|v_0^n\|_{H^r}$ which are bounded uniformly by $2K$ for large $n$, allowing us to soften to~$T_1$. 

Next, we can repeat the estimates above to get 
  \begin{equation}
   \|v^n(t)\|_{H^r}
   \les 
   \|v_0^n\|_{H^r} \exp \Bigg(
           \int_0^{T_1} \|v^n(s)\|_{W^{1,\infty}}\, ds
   \Bigg)
   \comma 
   t \in [0,T_1]
   .
   \label{EQ427}
  \end{equation}
Now, we pass to the limit in~\eqref{EQ427}. 
For each $t \in [0,T_1]$, we know that $\|v^n(t)\|_{H^r}$ is uniformly bounded and hence there exists a subsequence, denoted the same, such that $v^n(t) \weakto \tilde{v}(t)$ weakly in~$H^r$. 
Moreover,  $\tilde{v}(t) = v(t)$ and $v^n(t) \to v(t)$ in $H^s$ for any $s < r$, for which it suffices to show $v^n(t) \to v(t)$ in~$L^2$. 

Indeed, define $w^n = v^n - v$.
Then $n_m b_{m k}w_k^n = 0$ and $b_{ji} \partial_j w_i^n = 0$. 
Multiplying~\eqref{EQ413} by $J$ and taking a difference gives us 
  \begin{equation}
   J\partial_t w_i^n 
   + (v_m^n - \psi_m) b_{k m} \partial_k w_i^n
   + w_m^n b_{m k} \partial_k v_i
   + b_{k i} \partial_k (q^n - q)
   = 0
   .
   \llabel{EQ428}
  \end{equation}
We test with $w_i^n$ and obtain
  \begin{align}
   \begin{split}
    \frac{1}{2} \partial_t \int J |w^n|^2
    &= 
    \frac{1}{2} \int \partial_t J |w^n|^2
    - \frac{1}{2} \int b_{k m} (v_m^n - \psi_m) \partial_k|w^n|^2
    \\&\indeq
    - \int w_m^n b_{k m} \partial_k v_i w_i^n
    - \int \partial_k (b_{k i} (q^n - q)) w_i^n
    .
   \end{split}
   \llabel{EQ429}
  \end{align}
After integrating by parts, all boundary terms and pressure terms vanish to leave us with
  \begin{equation}
   \frac{1}{2} \partial_t \int J |w^n|^2
   = 
   -\frac{1}{2} \int \partial_t J |w^n|^2
   + \frac{1}{2}\int b_{k m} \partial_k \psi_m |w^n|^2
   - \int w_m^n b_{k m} \partial_k v_i w_i^n
   .
   \llabel{EQ430}
  \end{equation}
Since $J$ is uniformly bounded above and below by positive constants, we get
  \begin{equation}
   \|w^n\|_{L^2}
   \les
   \|w_0^n(t)\|_{L^2} \exp\Bigg(
           \int_0^{T_1} \|\nabla v \|_{L^\infty}
   \Bigg)\, ds
   \comma 
   t \in [0,T_1]
   .
   \llabel{EQ431}
  \end{equation}
Hence, $v^n(t) \to v(t)$ in $L^2$ for $t \in [0,T_1]$.
The convergence in $H^s$ for $s < r$ follows by interpolation and boundedness of $v^n(t)$ in~$H^r$.
By embedding, this means $v^n(t) \to v(t)$ in~$W^{1,\infty}$. 
Passing to the limit in~\eqref{EQ427}, we arrive at~\eqref{EQ418} for $v(t) \in H^r$ with $T=T_1$. 
Proceeding as we did after~\eqref{EQ418}, we get $\|v(t)\|_{H^r} \leq 2K$ for $t \in [0,T_1]$.

Taking a new initial value as $v(T_1)$ and repeating this process extends our solution and its desired control to $[0,2T_1]$ since $\|v(T_1)\|_{H^r} \leq 2K$. 
Because $K$ is unchanged for $T < \hat{T}$, we can iterate this process a finite number of times until we extend our solution past $\hat{T}$, revealing a contradiction. 
\end{proof}

\section*{Acknowledgments}
\rm
BI and IK were supported in part by the NSF grant DMS-2205493.

%\vskip-4truecm

\end{document}